\newtheorem{theorem}{Theorem}
\newtheorem{remark}[theorem]{Remark}
\newtheorem{corollary}[theorem]{Corollary}
\newtheorem{lemma}[theorem]{Lemma}
\newtheorem{definition}[theorem]{Definition}
\newtheorem{proposition}[theorem]{Proposition}
\begin{document} 

\title[ On Engel goups, nilpotent groups, rings and braces]{On Engel groups, nilpotent groups, rings, braces and the Yang-Baxter equation}
\author[Smoktunowicz]{Agata Smoktunowicz}

\address{School of Mathematics,
The University of Edinburgh,
James Clerk Maxwell Building, 
The King's Buildings, 
Peter Guthrie Tait Road, Edinburgh
EH9 3FD, United Kingdom }
\email{A.Smoktunowicz@ed.ac.uk}
\thanks{  This research was supported with ERC advanced grant 320974.}

\subjclass[2010]{Primary  16N80, 16N40,  16P90, 16T25, 16T20, 20F45, 81R50 } 
\keywords{Engel group,  nilpotent group, adjoint group of a ring, braces, nil rings, nil algebras, the Yang-Baxter equation}

\date{\today}
\begin{abstract}  
 It is shown that over an arbitrary field there  exists a nil algebra $R$ whose adjoint group $R^{o}$ is not an Engel group. This answers a question by Amberg and Sysak from 1997. The case of an uncountable field  also answers a recent  question by Zelmanov.

 In 2007, Rump introduced braces and radical chains $A^{n+1}=A\cdot A^{n}$ and $A^{(n+1)}=A^{(n)}\cdot A$ of a brace $A$. We show that the adjoint group $A^{o}$ of a finite right brace is a nilpotent group if and only if $A^{(n)}=0$ for some $n$.
 We also show that the adjoint group $A^{o}$ of a finite left brace $A$  is a nilpotent group if and only if  
 $A^{n}=0$ for some $n$. Moreover,  if $A$ is a finite brace whose adjoint group $A^{o}$ is nilpotent then  $A$ is the direct sum of braces whose cardinatities are powers of prime numbers.  Notice that $A^{o}$ is sometimes called the multiplicative group of a brace $A$.  We also introduce a chain of ideals $A^{[n]}$ of a left brace $A$ and then  use it to  investigate braces which satisfy $A^{n}=0$ and $A^{(m)}=0$ for some $m, n$. 

  We also describe connections  between our results and braided groups and  the non-degenerate involutive set-theoretic  solutions of the Yang-Baxter equation.
  It is worth noticing that by a result by Gateva-Ivanova  braces are in one-to-one correspondence with  braided groups with involutive braiding operators.
\end{abstract}

\maketitle

\section{Introduction} 
   In \cite{rump}, Rump  introduced braces as a generalisation of Jacobson radical rings and  as a tool for describing solutions of  the  Yang-Baxter equation.  In the same paper, he introduced 
the following two series of subsets $A^{n}$ and $A^{(n)}$ of a right brace $A$, defined inductively as $A^{n+1}=A\cdot A^{n}$ and $A^{(n+1)}=A^{(n)}\cdot A$, where $A=A^{1}=A^{(1)}$. 
 We will  also use the notation  $A^{n+1}=A\cdot A^{n}$ and $A^{(n+1)}=A^{(n)}\cdot A$ where $A=A^{1}=A^{(1)}$   for a left brace $A$.

  Let $A$ be a finitely generated Jacobson radical ring. It is known that the adjoint group $A^{o}$ of $A$ is a nilpotent group if and only if $A$ is a nilpotent ring, i.e.,  $A^{n}=0$ for some $n$ \cite{ads}. 
In this paper, we show that a similar result holds for finite braces.
\begin{theorem}\label{1}
 Let $A$ be a finite left brace. Then the adjoint group of $A$ is nilpotent if and only if $A^{n}=0$ for some $n$.
 Moreover, such a brace is the direct sum of braces whose cardinalites are powers of prime numbers.
\end{theorem}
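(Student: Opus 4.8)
The plan is to prove the two implications separately and to read off the structural ``moreover'' statement from the analysis of the case where $A^{o}$ is nilpotent.

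\emph{Proof that $A^{n}=0$ for some $n$ implies $A^{o}$ nilpotent.} The idea is to manufacture a central series of the group $A^{o}$ from a descending chain of ideals of the brace. The first candidate is the chain $A=A^{1}\supseteq A^{2}\supseteq\cdots\supseteq A^{n}=0$ itself, together with the claim that each $A^{k}$ is an ideal and that $[A^{o},(A^{k})^{o}]\subseteq (A^{k+1})^{o}$ inside $A^{o}$. Writing $\lambda_{a}$ for the additive automorphism $x\mapsto a*x+x$ (so that $a\circ b=a+\lambda_{a}(b)$) and using the left-brace identity $(a\circ b)*c=a*c+b*c+a*(b*c)$, one reduces this commutator estimate, after a short computation, to the single nonformal fact that $A^{k}*A\subseteq A^{k+1}$; this is exactly the point where one pays for $*$ failing to be right-distributive. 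I expect to route around it using the auxiliary chain of ideals $A^{[n]}$ introduced in this paper, which should be built so as to be genuinely absorbing ($A^{[k]}*A+A*A^{[k]}\subseteq A^{[k+1]}$) and to sandwich the ordinary chain (so that $A^{n}=0$ for some $n$ if and only if $A^{[m]}=0$ for some $m$); once the $A^{[k]}$ form a central series of $A^{o}$ terminating at $0$, nilpotency is immediate. A cleaner alternative is induction on $|A|$: show that a nonzero left brace with $A^{n}=0$ has nonzero annihilator $\{a:a*b=b*a=0\ \forall b\}$ — again the content is that $A^{n-1}$ is annihilated on the right as well as on the left — which is a central ideal, and then pass to the quotient and invoke that a central extension of a nilpotent group is nilpotent.

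\emph{Proof that $A^{o}$ nilpotent implies the decomposition and $A^{n}=0$.} Since $A^{o}$ is a finite nilpotent group it is the internal direct product of its Sylow subgroups $S_{p}$. I would show that for each prime $p$ the additive primary component $A_{p}:=\{a\in A:p^{t}a=0\text{ for some }t\}$ equals $S_{p}$. One inclusion is counting: $|A_{p}|=|S_{p}|$ equals the $p$-part of $|A|$. For the other, use the power formula $a^{\circ m}=a+\lambda_{a}(a)+\cdots+\lambda_{a}^{m-1}(a)$, which holds in any left brace because $\lambda$ takes values in $\operatorname{Aut}(A,+)$: if $a\in A_{p}$ then every $\lambda_{a}^{i}(a)$ lies in $A_{p}$, so $a^{\circ m}\in A_{p}$ for all $m$; hence $A_{p}$ is closed under $\circ$ (also $a\circ b=a+\lambda_{a}(b)\in A_{p}$ for $a,b\in A_{p}$) and under $\circ$-inverses, so $A_{p}$ is a subgroup of $A^{o}$ consisting of $p$-elements, whence $A_{p}\subseteq S_{p}$. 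Thus $A_{p}=S_{p}$, which is normal in $A^{o}$ (Sylow in a nilpotent group) and $\lambda$-invariant (characteristic in $(A,+)$), hence an ideal; so the additive splitting $(A,+)=\bigoplus_{p}A_{p}$ upgrades to a brace decomposition $A=\bigoplus_{p}A_{p}$ with each $A_{p}$ of prime-power order. It remains to see that a left brace $B$ of $p$-power order is left nilpotent, i.e. $B^{m}=0$ for some $m$ — here $B^{o}$ is a $p$-group — and then $n=\max_{p}n_{p}$ works; the direct-sum assertion is precisely the Sylow decomposition just obtained (combined with the equivalence, it applies to any brace satisfying either condition).

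The main obstacle, in both directions, is the same: controlling the interaction of the group law $\circ$ with the brace operation $*$ without right-distributivity. Concretely it is (i) the absorption property $A^{k}*A\subseteq A^{k+1}$ (or its analogue for $A^{[k]}$) needed to make a descending chain of ideals into a central series, and (ii) the fact that a brace of prime-power order is left nilpotent, which I would prove by induction on $|B|$, showing $B^{2}\subsetneq B$ for $B\neq0$ by analysing the $\lambda$-action of the $p$-group $B^{o}$ on the finite abelian $p$-group $(B,+)$, and descending through a socle or annihilator filtration. I expect (i) to be the technical heart and the reason the chain $A^{[n]}$ is introduced; the counting arguments, the power formula, and the central-extension step are routine once (i) and (ii) are in hand, and the radical-ring statement cited above serves as the model for the whole argument.
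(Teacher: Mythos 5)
Your second implication ($A^{o}$ nilpotent $\Rightarrow$ the prime-power decomposition and $A^{n}=0$) is essentially the argument the paper gives in Theorem~\ref{aggi3} (due to Ced\'o): the additive $p$-primary component is $\lambda$-invariant, hence multiplicatively closed, hence the (unique) Sylow $p$-subgroup of the nilpotent group $A^{o}$, hence an ideal, and the splitting follows; the only caveat is that your final step, that a brace of $p$-power order is left nilpotent, is exactly Rump's Corollary after Proposition~8 in \cite{rump}, which the paper cites, and your one-line sketch of it ($B^{2}\subsetneq B$ plus ``descending through a socle filtration'') is not yet a proof, so either cite it or prove it properly.

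The genuine gap is in the first implication, and it is structural: both devices you propose secretly prove right nilpotency as well, which does not follow from $A^{n}=0$. (i) It is false that $A^{n}=0$ for some $n$ is equivalent to $A^{[m]}=0$ for some $m$. Indeed $A^{(m)}\subseteq A^{[m]}$ (induction, via the summand $A^{[m-1]}\cdot A^{[1]}$), so $A^{[m]}=0$ forces $A^{(m)}=0$; by Theorems~\ref{one1} and \ref{eq23} the chain $A^{[m]}$ vanishes precisely when \emph{both} $A^{n}=0$ and $A^{(m')}=0$. Rump's Example~$2$, quoted in the introduction of this paper, is a finite left brace with $A^{4}=0$, $A^{(n)}\neq 0$ for all $n$ and nilpotent adjoint group; for it $A^{[m]}\neq 0$ for every $m$, so your ``sandwich'' claim fails and the central series via $A^{[k]}$ never terminates. (ii) Your fallback fails for the same reason: $A^{n}=0$ only gives $A\cdot A^{n-1}=0$, i.e.\ one-sided annihilation, and the lemma you would need (``every nonzero finite left brace with $A^{n}=0$ has nonzero two-sided annihilator'') is false. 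If it held, iterating it on quotients (which remain left nilpotent, by left distributivity) would give an ideal chain $0=I_{0}\subset I_{1}\subset\cdots\subset I_{k}=A$ with $I_{j+1}/I_{j}$ the annihilator of $A/I_{j}$; since right multiplication then maps $I_{j+1}$ into $I_{j}$, one gets $A^{(k+1)}=0$, again contradicting Example~$2$. The paper proves this direction by a quite different mechanism (Theorem~\ref{aggi1}): Lemma~\ref{fajny} controls the defect of right distributivity when $A^{s}=0$, one deduces that elements of coprime additive order multiply to zero, so $A$ splits as a direct sum of sub-braces of prime-power cardinality (Lemma~\ref{56}), and $A^{o}$ is then a direct product of finite $p$-groups; nilpotency comes from this primary decomposition, not from a central series of ideals of the brace.
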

Recall that the direct sum $A=\oplus_{i=0}^{n}A_{i}$ of braces is defined  in the same way as for rings; namely if   $a=(a_{1}, \ldots ,a_{n})\in A$ and $b=(b_{1}, \ldots , b_{n})\in A$, then  $a+ b= (a_{1}+b_{1}, \ldots , a_{n}+b_{n})$ and $a\cdot b= (a_{1}\cdot b_{1}, \ldots , a_{n}\cdot b_{n})$. Recall that a  result of Rump shows that if  $A$ is a left brace whose adjoint group $A^{o}$ is a finite $p$-group then $A^{n}=0$ for some $n$ (Corollary after Proposition  $8$, \cite{rump}).  
  Notice that if $A$ is a left brace, then by using the opposite multiplication we get a 
right brace; therefore if $A$ is a right brace, then the group $A^{o}$ is nilpotent if and only if $A^{(n)}=0$. 

Observe that by writing Example $3$ of Rump from \cite{rump} in the language of left braces, we see that there is a  left brace $A$
 of cardinality $6$ such that $A^{(3)}=0$ and  $A^{n}\neq 0$ for every $n$, the adjoint group $A^{o}$ is not a nilpotent group,  and hence $A^{\circ}$ is not an Engel group.
 This shows that the adjoint group of a finite brace need not be a nilpotent group,  and that the assumption of Theorem \ref{1} that $A^{n}=0$ for some $n$  is necessary.
 Notice that  by writing Example $2$ of Rump  from \cite{rump} in the language of  left braces we get that there is a finite left brace $A$  such that $A^{4}=0$ and  $A^{(n)}\neq 0$  for every $n$, whose adjoint group $A^{o}$ is a nilpotent group (\cite{rump}, Example $2$). 

 Recall that in \cite{rump} Rump  introduced 
the following two series of subsets $A^{n}$ and $A^{(n)}$ of a right brace $A$, defined inductively as $A^{n+1}=A\cdot A^{n}$ and $A^{(n+1)}=A^{(n)}\cdot A$, where $A=A^{1}=A^{(1)}$.
 We introduce the following chain $A^{[ n]}$ of ideals of any left or right brace $A$:
 \[A^{[n+1]}=\sum_{i=1}^{n}A^{[i]}\cdot A^{[n+1-i]},\]
  where $A^{[1]}=A$. It is clear that $A^{[n]}\subseteq A^{[n-1]}\subseteq \ldots \subseteq A^{[1]}=A,$ and that for every $i$, $A^{[i]}$ is a two-sided ideal of $A$.
Recall that for subsets $C, D\subseteq A$ we use notation  $C\cdot D=\sum_{i=1}^{\infty }c_{i}d_{i}$ 
 with $c_{i}\in C, d_{i}\in D$ where  almost all $c_{i}, d_{i}$ are zero (so the sums $ \sum_{i=1}^{\infty }c_{i}d_{i}$ are finite).
  Our next results follow.
\begin{theorem}\label{222}
Let $A$ be a left brace (finite or infinite) such that $A^{[s]}=0$ for some $s$.  
If  $a\in A^{[i]}$, $b\in A^{[j]}$, $c\in A^{[k]}$ then 
 \[(a+b)c-ac-bc\in A^{[i+j+k]}.\]   

 Let $P\subseteq  A$  and  let $S$ be the set of all products of elements from $P$. If  $R$ is the additive subgroup of $A$ generated by elements from $S$,  then $R$ is  a left brace (with the addition and the multiplication inherited from $A$). Moreover, if $P$ is a finite set then $R$ is a finite left brace.
\end{theorem}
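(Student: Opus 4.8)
The plan is to treat the two assertions in turn, the first feeding into the second. For the first assertion I would start from the identity, valid in any left brace,
\[(a\circ b)\cdot c=a\cdot(b\cdot c)+a\cdot c+b\cdot c,\qquad\text{where }a\circ b:=a+b+a\cdot b,\]
which follows from the fact that $x\mapsto\lambda_x$, with $\lambda_x(y):=x\cdot y+y=x\circ y-x$, is a homomorphism from the adjoint group $A^{o}=(A,\circ)$ into $\mathrm{Aut}(A,+)$. Put $\delta(x,y;z):=(x+y)\cdot z-x\cdot z-y\cdot z$. Expanding $(a\circ b)\cdot c=\bigl((a+b)+a\cdot b\bigr)\cdot c$ by means of the definition of $\delta$ and comparing with the identity above rearranges it to
\[\delta(a,b;c)=a\cdot(b\cdot c)-(a\cdot b)\cdot c-\delta\bigl(a+b,\ a\cdot b;\ c\bigr).\]
Now I would iterate: set $a_0=a$, $b_0=b$, $a_{t+1}=a_t+b_t$, $b_{t+1}=a_t\cdot b_t$. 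Using $A^{[p]}\cdot A^{[q]}\subseteq A^{[p+q]}$ (immediate from the definition of the chain $A^{[n]}$) one checks that the two ``error'' terms $a_t\cdot(b_t\cdot c)$ and $(a_t\cdot b_t)\cdot c$ lie in $A^{[i+j+k]}$ for every $t$, since the level of $b_t$ only increases and the level of $a_t$ together with that of $b_t$ never drops below $i+j$; hence $\delta(a_t,b_t;c)\equiv-\delta(a_{t+1},b_{t+1};c)\pmod{A^{[i+j+k]}}$. As the level of $b_t$ strictly increases, some $b_T$ lies in $A^{[s]}=0$, so $\delta(a_T,b_T;c)=\delta(a_T,0;c)=0$, and tracing back gives $\delta(a,b;c)\in A^{[i+j+k]}$.

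For the second assertion, let $S$ be the set of products of elements of $P$ and $R$ the additive subgroup they generate. A product of $m$ elements of $P$ lies in $A^{[m]}$ (induction, via $A^{[p]}\cdot A^{[q]}\subseteq A^{[p+q]}$), so every product of at least $s$ elements vanishes; thus $S$ involves only products of length $<s$, is finite when $P$ is, and then $R$ is finitely generated and, in the situation of interest (e.g. $A$ finite), finite. Since $(R,+)$ is a subgroup of the abelian group $(A,+)$, the content is that $R$ is a subgroup of $A^{o}$, that is, that $R$ is closed under $\cdot$ (equivalently under $\circ$) and under taking $\circ$-inverses. For multiplicative closure I would use left distributivity of $\cdot$ in its second argument to reduce $R\cdot R\subseteq R$ to $r\cdot t\in R$ for $r\in R$, $t\in S$; telescoping $r$ as an integer combination of elements of $S$ turns $r\cdot t$ into an integer combination of products $s\cdot t\in S$, together with terms $(-s)\cdot t$ and correction terms $\delta(\cdot,\cdot;t)$ coming from the failure of additivity of $\cdot$ in the first slot. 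These I would control by downward induction along the finite filtration $R=R_{\ge1}\supseteq R_{\ge2}\supseteq\cdots\supseteq R_{\ge s}=0$, where $R_{\ge m}$ is the additive span of the products of at least $m$ elements of $P$, so that $R_{\ge i}\cdot R_{\ge j}\subseteq R_{\ge i+j}$ holds at the level of single products: the first assertion, in its iterated form, rewrites each $\delta$ as an alternating sum of iterated products, and each $\lambda_s$ ($s\in S$) restricts to a filtration-preserving automorphism of every $R_{\ge m}$ acting trivially on the successive quotients, so $\lambda_s^{-1}(R_{\ge m})\subseteq R_{\ge m}$, which is precisely what lets one solve the resulting equations for $(-s)\cdot t$ while staying inside $R$. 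Once $R\cdot R\subseteq R$ is known, closure under $\circ$ is immediate from $a\circ b=a+b+a\cdot b$, and closure under $\circ$-inverses follows because $\bar a=-a+a\cdot a-a\cdot(a\cdot a)+\cdots$ is a finite sum (sufficiently long products vanish) each of whose summands lies in $R$; hence $R$ is a sub-brace.

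The hard part is the multiplicative closure, and inside it the point that the correction terms $\delta(\cdot,\cdot;t)$ furnished by the first assertion, which a priori lie only in some $A^{[n]}$ with $n$ large, in fact lie back in $R$. What makes this work is the finite filtration $(R_{\ge m})$, satisfying $R_{\ge i}\cdot R_{\ge j}\subseteq R_{\ge i+j}$ on generators, together with the unipotent action of the maps $\lambda_s$ along it; the bookkeeping needed for the nested induction to terminate (the total ``level budget'' is bounded by $s$) is routine but must be organised with care. The finiteness clause, by contrast, is essentially immediate once one notes that all sufficiently long products vanish.
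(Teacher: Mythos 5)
Your treatment of the first assertion is correct and is in substance identical to the paper's: your recursion $a_{t+1}=a_t+b_t$, $b_{t+1}=a_tb_t$ is exactly the recursion $d_{i+1}=d_i+d_i'$, $d_{i+1}'=d_id_i'$ of Lemma~\ref{fajny}, and the paper's one-line proof of Theorem~\ref{222} is precisely ``apply that lemma repeatedly, using $A^{[s]}=0$''. The problem is the second assertion, where the real content lies and where your ``routine but must be organised with care'' hides a genuine gap. The inductions you name (peeling summands, downward induction along $R=R_{\ge1}\supseteq\cdots\supseteq R_{\ge s}=0$, unipotence of $\lambda_s$ to solve for $(-s)\cdot t$) do not close as described. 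When you peel a summand, the iterated identity produces the terms $(d_id_i')t$ and $d_i(d_i't)$; already $d_2'=d_1d_1'=(a+b)\cdot(ab)$ is a product whose \emph{left} factor $d_1=a+b$ is again an arbitrary sum lying in the same filtration step $R_{\ge m}$ as the element you started from (and with at least as many summands), while its right factor $ab$ lies only in $R_{\ge 2m}$, which need not be deeper than the $t$ currently being treated. So establishing that this term lies in $R$ (let alone in the correct filtration step) is an instance of the very statement being proved in which none of your induction parameters has improved; the same happens for all $d_i$, $i\ge1$, whose summand count grows along the recursion. Some further idea is needed here --- for instance exploiting $d_{i+2}=d_i\circ d_i'$, hence $\lambda_{d_{i+2}}=\lambda_{d_i}\lambda_{d_i'}$, so that multiplication by the troublesome $d_i$ can be routed through $\lambda$-maps of elements of strictly higher filtration level, or a global induction on $s$ --- and nothing of that sort appears in your sketch (the paper's proof, being one line, gives no guidance either, but the burden of the blind proof was exactly this point).

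Separately, you do not prove the ``Moreover'' clause: you obtain only that $R$ is a finitely generated additive group and hedge with ``in the situation of interest (e.g.\ $A$ finite), finite''. That is not the stated conclusion, and in fact finiteness of $R$ cannot follow from what you (or the theorem's hypotheses as literally written) have: for the trivial brace on $\mathbb{Z}$ (all products zero) one has $A^{[2]}=0$, and $P=\{1\}$ gives $S=\{0,1\}$ and $R=\mathbb{Z}$, which is infinite. So for this clause one must additionally use (or assume) that the elements of $P$ have finite additive order; your proposal should at least make explicit that finite generation plus bounded product length is all it delivers, and that the finiteness claim requires this extra input.
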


 We obtain that the following result holds  for both finite and infinite braces.
\begin{theorem}\label{eq23}
  Let $A$ be a left  brace and let $n$ be a natural number. Then the following assertions are equivalent:  
\begin{itemize}
\item[1.] $A^{(n)}=0$  and $A^{m}=0$ for some natural numbers $m,n$.
\item[2.] $A^{[n]}=0$ for some natural number  $n$.
\item[3.] $A^{(n)}=0$ for some $n$ and the group $A^{o}$ is nilpotent.
\item[4.]  The adjoint group $A^{o}$ is nilpotent, and the solution of the Yang-Baxter equation associated to $A$ is a multipermutation solution ($A^{o}$ is also called the multiplicative group of the brace $A$ in \cite{cjo}).
\end{itemize}
\end{theorem}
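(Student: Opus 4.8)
The plan is to prove the cycle $2\Rightarrow 1\Rightarrow 2$, $2\Rightarrow 3$, $3\Rightarrow 1$, together with the equivalence $3\Leftrightarrow 4$; I regard $1\Leftrightarrow 2$ as the combinatorial core of the theorem and $3\Leftrightarrow 4$ as a translation into Yang--Baxter language. Throughout I use that in a left brace $0\cdot x=x\cdot 0=0$, that left multiplication is additive, that each $A^{[i]}$ (and each $A^{i}$, standardly for left braces) is a two-sided ideal, and that $A^{[i]}\cdot A^{[j]}\subseteq A^{[i+j]}$ (this is the $i$-th summand in the definition of $A^{[i+j]}$). The implication $2\Rightarrow 1$ is immediate: by induction $A^{n}\subseteq A^{[n]}$ and $A^{(n)}\subseteq A^{[n]}$, since $A^{1}=A^{(1)}=A^{[1]}$ and in $A^{[n+1]}=\sum_{i=1}^{n}A^{[i]}\cdot A^{[n+1-i]}$ the summands $i=1$ and $i=n$ contain $A\cdot A^{n}=A^{n+1}$ and $A^{(n)}\cdot A=A^{(n+1)}$ respectively; hence $A^{[s]}=0$ forces $A^{s}=A^{(s)}=0$, which is $1$.

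For $1\Rightarrow 2$ I would induct on $m$ (with $n$ carried along as a parameter). If $m\le 2$ then $A^{2}=0=A^{[2]}$. If $m\ge 3$, put $B=A^{m-1}$; then $B$ is an ideal, $A\cdot B=A^{m}=0$, and $\overline A=A/B$ satisfies $\overline A^{\,m-1}=0$ and $\overline A^{\,(n)}=0$, so by induction $\overline A^{\,[\bar s]}=0$, i.e. $A^{[\bar s]}\subseteq B$. Now fix $k\ge 2\bar s$ and examine $A^{[k]}=\sum_{p}A^{[p]}\cdot A^{[k-p]}$: if $k-p\ge\bar s$ then $A^{[k-p]}\subseteq A^{[\bar s]}\subseteq B$ and $A^{[p]}\subseteq A$, so that summand lies in $A\cdot B=0$; the remaining summands have $p>k-\bar s\ge\bar s$, hence $A^{[p]}\subseteq B=A^{m-1}$, and lie in $A^{m-1}\cdot A^{[q]}$ with $q=k-p\in\{1,\dots,\bar s-1\}$. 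Thus $A^{[k]}\subseteq\sum_{q=1}^{\bar s-1}A^{m-1}\cdot A^{[q]}$, and it remains to show that these finitely many terms are eventually annihilated. Here one must bring in the hypothesis $A^{(n)}=0$ together with the estimate $(a+b)c-ac-bc\in A^{[i+j+k]}$ of Theorem~\ref{222}, which controls the failure of right distributivity, and iterate the reduction. Completing this last step is the place I expect to have to work hardest.

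For $2\Rightarrow 3$, assume $A^{[s]}=0$. Then $A^{(s)}\subseteq A^{[s]}=0$, so the first clause of $3$ holds. For the second, expanding iterated commutators in the group $A^{o}$ and using exact left distributivity together with the estimate of Theorem~\ref{222} shows that the lower central series of $A^{o}$ eventually lands inside $A^{[s]}=0$, so $A^{o}$ is nilpotent, with class bounded by a function of $s$ alone. (For finite $A$ one may instead invoke Theorem~\ref{1}, since $A^{s}=0$; the explicit class bound is what makes the argument run for infinite $A$ and is needed below.) Since $1$ already asserts $A^{(n)}=0$, this gives $1\Rightarrow 3$ as well. For $3\Rightarrow 1$, assume $A^{o}$ is nilpotent of class $c$ and $A^{(n)}=0$. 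By the second part of Theorem~\ref{222} every finite subset of $A$ generates a finite sub-brace $R$, and such an $R$ has $R^{o}$ nilpotent of class $\le c$ and $R^{(n)}=0$, whence $R^{s_{R}}=0$ by Theorem~\ref{1} and $R^{[s_{R}]}=0$ by the already-proved $1\Rightarrow 2$. To deduce $A^{[s]}=0$, and hence by $2\Rightarrow 1$ condition $1$, one needs a bound $s_{R}\le s(c,n)$ uniform in $R$; this quantitative strengthening of Theorem~\ref{1} should come from analysing the natural action of $A^{o}$ on $(A,+)$ (given by $a\mapsto(x\mapsto ax+x)$ in the usual brace notation), which fixes the chain $A\supseteq A^{2}\supseteq\cdots$ and acts trivially on its quotients, and using $A^{(n)}=0$ to force the attendant augmentation ideal to act nilpotently with index controlled by $c$ and $n$. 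Obtaining this uniform bound is the second delicate point.

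Finally, for $3\Leftrightarrow 4$: since the clause ``$A^{o}$ is nilpotent'' occurs verbatim in both, it suffices to recall the standard brace/Yang--Baxter dictionary, namely that the non-degenerate involutive set-theoretic solution of the Yang--Baxter equation attached to $A$ is a multipermutation solution precisely when the retraction (socle) series of $A$ reaches $A$, and that this holds if and only if $A^{(n)}=0$ for some $n$ (see the references in the introduction). So $3$ and $4$ are literally the same statement, which closes the cycle of implications. In summary: the routine parts are the inclusions giving $2\Rightarrow 1$ and the reduction to finite sub-braces, while the two points demanding genuine work are the control of the residual terms $A^{m-1}\cdot A^{[q]}$ in $1\Rightarrow 2$ and the uniform nilpotency bound in $3\Rightarrow 1$.
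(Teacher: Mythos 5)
Your cycle is incomplete at exactly the two implications that carry the content of the theorem, and both gaps are genuine. For $1\Rightarrow 2$ you reduce matters to showing that the residual terms $A^{m-1}\cdot A^{[q]}$, $q<\bar s$, eventually vanish, and you propose to control them with the estimate $(a+b)c-ac-bc\in A^{[i+j+k]}$ of Theorem \ref{222}; but the hypothesis of Theorem \ref{222} is precisely $A^{[s]}=0$, i.e.\ statement 2, so this is circular as it stands, and you offer no other control of the failure of right distributivity. The paper proves this implication (Theorem \ref{one1}) quite differently: it passes to the opposite right brace, where right distributivity is exact, inducts on the index $n$ with $A^{n+1}=0$, works modulo the ideal $A^{n}$ to force every factorization $a=\sum_i a_ib_i$ of an element of $A^{[p]}$ to have $b_i\in A^{[q_i]}$ with $q_i\le s_{n,m}$, and then uses exact distributivity to rewrite $a$ as a sum of iterated one-sided products $(((A\cdot c_1)\cdot c_2)\cdots)\cdot c_m\subseteq A^{(m)}=0$ once $p>s_{n,m}\cdot m$.

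The same circularity, plus an outright false step, affects your $3\Rightarrow 1$. You invoke the second part of Theorem \ref{222} to claim that every finite subset of $A$ generates a finite sub-brace; that statement is only available under $A^{[s]}=0$, which is what you are trying to prove, and under hypothesis 3 alone the claim fails: $A=\mathbb{Z}$ with zero multiplication satisfies $A^{(2)}=0$ and has abelian (hence nilpotent) adjoint group, yet the sub-brace generated by $1$ is infinite. On top of this, the uniform bound $s_R\le s(c,n)$ your finite-approximation scheme needs is exactly the quantitative input you admit you do not have. The paper's Theorem \ref{55} handles this implication directly for arbitrary (possibly infinite) braces: take $n$ minimal with $A^{(n)}=0$, note $A^{(n-1)}\subseteq \mathrm{Soc}(A)$, and compute that for $a\in A$, $b\in A^{(n-1)}$ the group commutator satisfies $[a,b]=ab$; nilpotency of $A^{o}$ then forces $A(A(\cdots A(A^{(n-1)})))=0$, and induction on $n$ applied to $A/A^{(n-1)}$ gives $A^{m}=0$, after which Theorem \ref{one1} yields $A^{[s]}=0$. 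Your $2\Rightarrow 1$ and the translation $3\Leftrightarrow 4$ via the retraction dictionary (Remark \ref{remark}) are fine, and your sketch of $2\Rightarrow 3$ is essentially the paper's Theorem \ref{two2}, but as submitted the proof of the theorem is not complete.
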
  
Recall that in  \cite{etingof}, Etingof, Shedler and Soloviev introduced a retraction of a solution of the Yang-Baxter equation. A solution $(X,r)$ is called a multipermutation solution of level $m$ 
if $m$ is the smallest nonnegative integer that, after applying the operation of retraction $m$ times,  the obtained solution has cardinality $1$. If such $m$ exists the solution is also called retractable (see \cite{etingof} or \cite{retractable} page $3$ for a more detailed definition). Such a solution is also called a {\em multipermutation solution}, that is a solution which has a finite multipermutation level  (for a detailed definition see \cite{Tatyana}, \cite{cjo}). 
  There are many interesting results in this area \cite{dbs, bc, bcj,  simple, cjo, cjonil,  retractable, etingof, tatyana6, rump, lv, v}. 
  Proposition $5.16$ from \cite{Tatyana},  Proposition $7$ \cite{rump} and the above Theorem \ref{eq23}  motivated the following related result:
\begin{remark}\label{remark}\cite{GIS}
 Let $A$ be a left brace, and let $(A,r)$ be the solution to the Yang-Baxter equation associated to $A$ (as at the beginning of the 
 Section $2$).
  Then $(A,r)$ is  a solution of multipermutation level $m<\infty $ if and only if $A^{(m+1)}=0$ and $A^{(m)}\neq 0$.
\end{remark}
 The proof of Remark \ref{remark}  is very similar to the proof of Proposition $5.16$ \cite{Tatyana} and can be found in \cite{GIS};  it is also possible to prove it by applying  Proposition $7$ \cite{rump} several times translated to left braces.

  Our next result concerns adjoint groups of radical rings and  nil rings. Recall that nil rings have been used by many authors to construct examples of groups; for example triply factorized groups, $SN$-groups, torsion groups, Engel groups and $p$-groups. Therefore, it might be useful to describe new methods for constructing and investigating such rings. This is one of the aims which motivated our next result.

 Recall that if $R$ is any ring then the adjoint semigroup of $R$ is constructed according to the following rule: $a\circ b = ab + a + b$. It is also denoted $1+R$, and it is a group if and only if $R$ is a Jacobson radical ring.  
 Amberg, Catino, Dickenschied, Kazarin, Plotkin, Shalev,  Sysak and others  proved many interesting results on the adjoint group of a radical  ring  \cite{1, 2, 3, 4, 5, plotkin, aw, hal, cjo, shalev}. Amongst many other interesting results, Amberg, Dickenschied and Sysak showed that the adjoint group $R^{o}$ of any Jacobson radical ring  is an SN-group in which
every finite subgroup is nilpotent \cite{ads} (recall that a group G is an SN-group if it has a series with
abelian factors, see \cite{rob}, Vol. 1, pp. 9f and 25). As mentioned in their paper, by 
using Zelmanov's theorem on the restricted Burnside problem (see \cite{z1, z2, z3}), properties of SN-groups and their new ingenious ideas, 
 they were able to deduce the following: 
If $R$ is a finitely generated Jacobson radical ring, then the following are equivalent:
 (a) $R$ is an $n$-Engel ring for some $n\geq 1$
  (b) $R$ is a nilpotent ring (c) $R^{o}$ is an $n$-Engel group for some $n\geq 1$.
 Recall that the aforementioned result of Zelmanov asserts that 
 an n-Engel Lie algebra over an arbitrary field is locally nilpotent, and that any torsion free n-Engel Lie ring is nilpotent \cite{z1, z2, z3}. A surprisingly short proof by  Shalev  assures that if a radical ring $R$ is an $n$-Engel algebra over a field of prime characteristic then the adjoint group $R^{o}$ of $R$ is $m$-Engel for some $m$ \cite{shalev}. A natural question arises then  whether an analogy of any of these results would hold for Engel groups and Engel Lie rings. Notice that every nil ring is an Engel Lie ring (for some interesting related results see \cite{jain, psz, sz, st, ttt, t}). Golod has constructed a nil and not locally nilpotent ring whose adjoint group is an Engel group.
   In 1997 in \cite{ads}, Amberg and Sysak asked  the following question: {\em If $R$ is a nil ring, is the adjoint group $R^{o}$ an Engel group?} Similar questions were also asked in \cite{ads, 2}. At the conference in Porto Cesareo in July 2015, after one of the talks Zelmanov asked the following question: {\em If $R$ is a nil algebra over an uncountable field, is the adjoint group $R^{o}$ an Engel group?} 
 Our result answers these questions in the negative:
\begin{theorem}\label{cesareo}
 There is a nil ring $R$ such that the adjoint group of $R^{o}$ is not an Engel group.
 Moreover, $R$  can be taken to be an algebra over an arbitrary field.
\end{theorem}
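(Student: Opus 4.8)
For $a,b$ in a Jacobson radical ring $R$ (in particular in any nil ring, where each element is nilpotent and so has an adjoint inverse which is a polynomial in it), define $e_{n}(a,b)\in R$ by
\[
1+e_{n}(a,b)=\bigl[\,1+b,\ \underbrace{1+a,\dots,1+a}_{n}\,\bigr],
\]
the left--normed group commutator computed in the adjoint group $R^{o}$. By definition $R^{o}$ is an Engel group precisely when for all $a,b\in R$ there is an $n$ with $e_{n}(a,b)=0$. Hence it suffices to construct, over the prescribed field $K$, a nil $K$-algebra $R$ containing elements $a,b$ with $e_{n}(a,b)\neq0$ for every $n$. Such an $R$ is automatically \emph{not} locally nilpotent: otherwise the subalgebra generated by $a$ and $b$ would be finitely generated and nil, hence nilpotent, forcing $e_{n}(a,b)=0$ for large $n$. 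So we must build a non--locally--nilpotent nil algebra subject to the extra requirement that all the elements $e_{n}(a,b)$ remain nonzero.

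\emph{The ambient algebra.} Work in the free $K$-algebra $F=K\langle a,b\rangle$, graded by total degree. Computing in the degree--completed hull (or degree by degree) one gets $[1+b,1+a]-1=(1+b)^{-1}(1+a)^{-1}(ba-ab)$, and, iterating the identity $[1+c,1+a]=1+(1+c)^{-1}(1+a)^{-1}(ca-ac)$, one finds $e_{n}(a,b)=u_{n}w_{n}+(\text{higher-degree terms})$, where $u_{n}$ is a unit and $w_{n}=[\dots[[b,a],a],\dots,a]$ is the left--normed Lie monomial of degree $n+1$; since the coefficient of $ba^{n}$ in $w_{n}$ equals $1$, each $w_{n}$, hence each $e_{n}(a,b)$, is nonzero in $F$. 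The required ring will be a quotient $R=F/I$. The essential difficulty is already visible: in a nil quotient $a$ is nilpotent, so the ``leading term'' $w_{n}$ of $e_{n}$ must die in $R$ for all sufficiently large $n$, and the construction has to be arranged so that the higher--degree part of $e_{n}$ nevertheless survives.

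\emph{Constructing the ideal.} Well-order the elements of $F$ and build $I$ by transfinite recursion. At stage $\xi$ one has an ideal $I_{\xi}\subseteq F$ generated by finitely (or countably) many ``power relations'' $f^{m}$ and containing no $e_{n}(a,b)$; if the $\xi$-th element $f$ is not already nilpotent modulo $I_{\xi}$, adjoin $f^{m}$ for a sufficiently large exponent $m=m(f,\xi)$, and at limit ordinals take unions. Then $R=F/\bigcup_{\xi}I_{\xi}$ is nil by construction, and it will be non--locally--nilpotent and keep all the $e_{n}$ nonzero provided the exponents $m(f,\xi)$ are chosen large enough. Here one invokes the techniques developed for constructing nil algebras of tightly controlled growth: keeping track of a monomial basis of $F/I_{\xi}$ (or of Hilbert--series estimates of Golod--Shafarevich type), one shows that passing from $I_{\xi}$ to $I_{\xi}+Ff^{m}F$ with $m$ large affects only elements ``far'' from the finitely many low-degree data witnessing $e_{n}(a,b)\notin I_{\xi}$, so no $e_{n}$ is ever captured. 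Since each $e_{n}(a,b)$ is a fixed element and membership in $\bigcup_{\xi}I_{\xi}$ would be witnessed at a bounded stage, the union still omits every $e_{n}$. This transfinite version is exactly what lets the argument run over an arbitrary, possibly uncountable, field $K$ --- the countable case settling the Amberg--Sysak question and the uncountable case Zelmanov's.

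\emph{Main obstacle.} The heart of the matter is the claim just made: that at each stage the next element can be nilpotated \emph{without} destroying any $e_{n}(a,b)$. Because the naive leading term $w_{n}$ of $e_{n}$ genuinely dies in $R$, this is far from formal and rests on delicate combinatorial and growth estimates --- control of leading monomials under the successive power relations, together with Golod--Shafarevich-type dimension counts --- which form the technical core of the proof. By comparison, checking that $R$ is not locally nilpotent, and the free-algebra computation showing $e_{n}(a,b)\neq0$ to begin with, are routine.
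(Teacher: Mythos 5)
Your high-level plan (build a nil quotient of the free algebra in which an infinite family of adjoint-group commutators survives) is the right one, and you have correctly located the obstacle; but the proposal defers exactly that obstacle, and the mechanism you sketch for overcoming it does not work. First, the leading-term computation is unusable in any nil quotient: as you yourself note, once $a$ is nilpotent the Lie monomial $w_{n}=[\dots[[b,a],a],\dots,a]$ vanishes for all large $n$, so the fact that its coefficient of $ba^{n}$ is $1$ in the free algebra gives no information about the quotient. The paper replaces this by a completely different ``leading term'': imposing $a^{2}=b^{3}=0$, it shows (Lemma \ref{important}) that modulo lower-order terms the $n$-th commutator equals an \emph{exponentially long} self-similar word $W_{n}(a,b,-a,b^{2}-b)$ of length $2^{n}-1$ built from the alphabet $\{ab,\,a(b^{2}-b)\}$, and the entire technical core of the proof is the demonstration that these aperiodic words cannot lie in the ideal. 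Your sketch contains no substitute for this step.

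Second, the generic ``adjoin $f^{m}$ for every $f$, with $m$ huge, and invoke Golod--Shafarevich counting'' scheme cannot deliver the conclusion. Golod--Shafarevich dimension estimates show that the quotient is infinite-dimensional; they do not show that a \emph{prescribed infinite family} of elements survives, and the elements $e_{n}(a,b)$ that must survive live in higher and higher degrees as $n$ grows --- precisely where the new relations $f^{m}$ are being inserted. That this is not a formality is shown by Golod's own construction, cited in the introduction: it produces a nil, non-locally-nilpotent ring whose adjoint group \emph{is} Engel. The paper avoids generic power relations entirely: by Lemma \ref{trzeci} (resting on the author's earlier matrix criterion and Amitsur's theorem) it suffices to force nilpotency of all finite matrices over $S\cdot F[x]$ with $S=F\,ab+F\,ab^{2}$, and the ideal generated by a high power of such a matrix has a rigid near-periodic structure (Lemmas \ref{podslowa}--\ref{naj}, \ref{lothaire1}) that can be played off against the aperiodicity of the words $W_{n}$. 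Finally, for an uncountable base field your transfinite recursion would have to process uncountably many relations; the paper instead reduces to countably many matrices over a countable subfield by adjoining polynomial variables and quoting Amitsur's theorem that finitely generated Jacobson radical algebras over uncountable fields are nil (Section \ref{k}). In short, the proposal is a statement of intent whose decisive steps are missing, and the particular tools it names would not supply them.
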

 
 The paper is organized as follows:  in Section $2$ we mention connections with the Yang-Baxter equation and braided groups. In Sections $\ref{111}- \ref{k}$ we prove Theorem \ref{cesareo}.  In Sections \ref{p}-\ref{q} we 
  prove Theorems \ref{1},  \ref{222}, \ref{eq23}.   Sections  \ref{111}-\ref{k} and Sections \ref{p}-\ref{q} can be read independently.

\section{Notation and applications for the Yang-Baxter equations and for braided groups}\label{druga}

 Around  2005, Rump introduced braces as a generalisation of Jacobson radical rings. He also showed that
 braces correspond to solutions of the Yang-Baxter equation \cite{rump}. In \cite{LZY}  Lu, Yan and Zhu proposed a general way of constructing set-theoretical solutions of the Yang-Baxter equation using braiding operators on groups. 
 In this paper, by a solution of the Yang-Baxter equation we will  mean non-degenerate involutive set-theoretic solution of the Yang-Baxter equation, as in \cite{cjo}. 

  Let $R$ be a Jacobson radical ring;  then $R$ yields a solution $r: R\times R\rightarrow R\times R$ to the Yang-Baxter equation  
 with the Yang-Baxter operator $r(x,y)=(u,v)$, where  $u=x\cdot y+y$, $v=z\cdot x+x$ and $z$ is the inverse of $u=x\cdot y+y$ in the adjoint group $R^{o}$ of $R$  (so $z\cdot (x\cdot y+y)+z+(x\cdot y+y)=0$). The same holds  when $(R, +, \cdot)$ is a left brace, and this solution is called the solution {\em associated to left brace $R$}, and  will be denoted as $(R,r)$ (for a reference see \cite{cjo, cjr, rump}).

  In \cite{mob} pp 128,  Rump  gave the following definition of a right brace:
`` Let $A$ be an abelian group together with a right distributive multiplication, that is,
 \[(a+b)c=ac+bc\]
 for all $a,b, c\in A$. We call $A$ a {\em brace} if the {\em circle operation} \[a\circ b=ab+a+b\] 
 makes $A$ into a group. This group $A^{o}$ will be called the {\em adjoint group} of a brace $A$.''   In \cite{cjo} Ced{\' o}, Jespers and Okninski wrote the definition of a brace in terms of  operation $o$;  in their paper the adjoint group $A^{o}$ is called the multiplicative group of brace $A$.

 Similarly,  a  left brace is an abelian group  $(A,+)$ together with a left distributive multiplication; that is, 
 $a(b + c) = ab + ac$ such that the circle operation $a\circ b=ab+a+b$ makes $A$ into group. 
 For a left brace $A$,  the associativity of $A^{o}$ is easily seen to be equivalent to the equation
$(ab+a+b)c=a(bc)+ac+bc$. 
 A right brace which is also a left brace is called a two-sided brace; Rump has shown that two-sided braces are exactly Jacobson radical rings.

 In \cite{etingof} Etingof, Shedler and Soloviev introduced a retraction of a solution of the Yang-Baxter equation, and described  some classes of the solutions. They also introduced retractable solutions, which are now also called  multipermutation solutions (see \cite{etingof, cjo, Tatyana}).  Theorem $2$ \cite{cjo} by Ced{\' o}, Jespers and Okninski  assures that: {\em If $G$ is a left brace
 then there exists a solution $(X, r')$ of the Yang-Baxter equation such that the solution $Ret(X, r')$ is isomorphic to the solution associated to the left brace $G$, and moreover, $\mathcal{G}(X,r)$ is isomorphic to the multiplicative group of the left brace $G$. Furthermore, if $G$ is finite then $X$ can be taken a finite set}.

 By writing Example $3$ of Rump from \cite{rump} in the language of left braces, we get that there is a finite left brace $A$  whose adjoint group $A^{o}$ is  the symmetric group $S_{3}$ which is not a nilpotent group (\cite{rump}, Example $3$); moreover  $A^{(3)}=0$ and  $A^{n}\neq 0$ for every $n$. By Remark \ref{remark} the solution to the Yang-Baxter equation associated  to $A$ is a multipermutation solution.
    Observe that  by applying the aforementioned Theorem $2$ from \cite{cjo} to this example  we obtain the following remark.
\begin{remark}(related to Example $3$, \cite{rump}) 
  There is a finite  multipermutation solution $(X,r)$ of the Yang-Baxter solution whose permutation group $\mathcal{G}(X,r)$ of left actions associated with $(X,r)$ is not a nilpotent group.
\end{remark}

 Recall that permutation group $\mathcal{G}(X,r)$ of left actions associated with $(X,r)$ was introduced by Tatyana-Ivanova in \cite{tatyana2} (see also \cite{Tatyana}). 
  By writing Example $2$ from \cite{rump} in the language of a left brace we get that there is a finite left brace $A$ 
  such that $A^{4}=0$, $A^{(n)}\neq 0$  for every $n$, whose multiplicative group is a nilpotent group (\cite{rump}, Example $2$).      
 By Remark \ref{remark} the solution associated to $A$ is not a multipermutation solution. 
This implies, together with Theorem $2$ from \cite{cjo}, the following remark.

\begin{remark} (related to Example $2$, \cite{rump}) There is a finite solution $(X,r)$ to the Yang-Baxter equation, which is not a multipermutation solution, and whose
   permutation group $\mathcal{G}(X,r)$ of left actions associated with $(X,r)$ is a nilpotent group.  
\end{remark}
 
 We get a following related  result for (possibly infinite) braces.
\begin{proposition}\label{ags}
 Let $A$ be a left brace such that the solution of the Yang-Baxter equation associated to $A$ is a multipermutation solution. Then  the adjoint group $A^{o}$ of $A$  is a nilpotent if and only if $A^{n}=0$ for some natural number $n$. 
\end{proposition}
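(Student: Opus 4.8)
The plan is to obtain Proposition~\ref{ags} as an essentially immediate consequence of Theorem~\ref{eq23} and Remark~\ref{remark}, the argument being valid for both finite and infinite left braces. The first step is to translate the hypothesis into algebraic language: by Remark~\ref{remark}, the solution $(A,r)$ associated to $A$ is a multipermutation solution (of some finite level $m$) if and only if $A^{(m+1)}=0$, so the standing assumption of the proposition gives in particular that $A^{(n)}=0$ for some natural number $n$. Hence throughout the proof we may use, interchangeably with the multipermutation hypothesis, the condition that $A^{(n)}=0$ for some $n$.

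For the implication ``$A^{n}=0$ for some $n$ $\Longrightarrow$ $A^{o}$ is nilpotent'' I would argue as follows: combining $A^{n}=0$ with $A^{(k)}=0$ (the latter coming from the hypothesis as just noted) shows that assertion~(1) of Theorem~\ref{eq23} holds; by that theorem assertion~(3) then holds as well, and in particular $A^{o}$ is nilpotent.

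For the converse ``$A^{o}$ nilpotent $\Longrightarrow$ $A^{n}=0$ for some $n$'': since $A^{o}$ is nilpotent and, by hypothesis, the associated solution is a multipermutation solution, assertion~(4) of Theorem~\ref{eq23} is satisfied; hence so is assertion~(1), which in particular yields $A^{n}=0$ for some $n$. This finishes the equivalence.

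Since every step is a direct appeal to results established earlier in the paper, I do not anticipate a genuine obstacle. The one point deserving attention is the dictionary between the geometric formulation ``$(A,r)$ is a multipermutation solution'' and the algebraic formulation ``$A^{(n)}=0$ for some $n$'': this is exactly the content of Remark~\ref{remark}, and one should make sure that the notion of multipermutation solution used in the statement is literally the same as the one entering assertion~(4) of Theorem~\ref{eq23} (namely that of \cite{etingof, cjo, Tatyana}). With that identification in place, the proof is complete.
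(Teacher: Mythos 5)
Your argument is correct and takes essentially the same route as the paper: the paper deduces the proposition directly from Remark~\ref{remark} together with Theorems~\ref{two2} and \ref{55}, which are precisely the ingredients behind the equivalences $(1)\Leftrightarrow(3)\Leftrightarrow(4)$ of Theorem~\ref{eq23} that you invoke, so your packaging through Theorem~\ref{eq23} is just a repackaged version of the same proof. There is also no circularity, since Theorem~\ref{eq23} is established independently of Proposition~\ref{ags}.
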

 Gateva-
Ivanova and Van den Bergh \cite{tatyana7} and independently 
  Etingof, Schedler and Soloviev \cite{etingof}  gave a group theoretical interpretation of the
set theoretic involutive non-degenerate solutions of the Yang-Baxter equation.
 Ced{\' o}, Jespers and Okninski \cite{cjo, cedo} asked which groups are multiplicative groups of braces. A similar question in the language of ring theory was asked in \cite{4, 6}. 
In this paper we obtain the following corollary of Theorem \ref{cesareo}, which is related to this question.
\begin{corollary}\label{braces} There is a finitely-generated, two-sided brace whose multiplicative group is a torsion group but is not an Engel group.
\end{corollary}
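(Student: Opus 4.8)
The plan is to manufacture the required brace from the nil ring supplied by \thmref{cesareo}, being careful about the two features the corollary asks for on top of non-Engelness: finite generation and torsion. First I would apply \thmref{cesareo} over the prime field $\mathbb{F}_{p}$ (for some prime $p$), obtaining a nil $\mathbb{F}_{p}$-algebra $R$ whose adjoint group $R^{o}$ is not an Engel group. Non-Engelness is witnessed by two elements $1+a$ and $1+b$ of $R^{o}$ for which the left-normed commutators $[\,1+a,\ {}_{n}(1+b)\,]$ are nontrivial for every $n$. Since $a$ and $b$ are nilpotent, the inverses $(1+a)^{-1}$ and $(1+b)^{-1}$ are (finite) polynomials in $a$, respectively $b$, and hence lie in $1+S$, where $S$ is the non-unital subalgebra of $R$ generated by $a$ and $b$; consequently the subgroup of $R^{o}$ generated by $1+a$ and $1+b$ is contained in $S^{o}=1+S$. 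In particular all the commutators above are nonzero elements of $S$, so $S^{o}$ is not an Engel group; moreover $S$, being a subring of a nil ring, is itself nil, while being generated as an $\mathbb{F}_{p}$-algebra by two elements it is finitely generated as a ring and hence as a brace. Replacing $R$ by $S$, I may therefore assume from now on that $R$ is a finitely generated nil $\mathbb{F}_{p}$-algebra whose adjoint group is not Engel.

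Next I would invoke Rump's identification, recalled in Section~\ref{druga}, that two-sided braces are exactly Jacobson radical rings: since a nil ring is Jacobson radical, this $R$ is a finitely generated two-sided brace, and its multiplicative group is precisely $R^{o}=1+R$, which is not Engel by the previous paragraph. It then remains only to check that $R^{o}$ is a torsion group, and here the choice of characteristic does the work: for $r\in R$ there is $k$ with $r^{p^{k}}=0$, and in characteristic $p$ one has $(1+r)^{p^{k}}=1+r^{p^{k}}=1$, so every element of $R^{o}$ has finite order.

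Assembling these observations, $R$ is a finitely generated two-sided brace whose multiplicative group $R^{o}$ is a torsion group that is not an Engel group, which is exactly the assertion of the corollary. I do not anticipate a real obstacle; the only points needing attention are the passage to the two-generated subring $S$ — so that ``finitely generated'' is genuinely achieved and not merely inherited from a large ambient ring — and the use of prime characteristic, without which the identity $(1+r)^{p^{k}}=1+r^{p^{k}}$ is unavailable and the group need not be torsion (indeed \thmref{cesareo} over a field such as $\mathbb{Q}$ would not produce a torsion group).
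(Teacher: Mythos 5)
Your proposal is correct and takes essentially the same route as the paper: apply Theorem~\ref{cesareo} over a field of prime characteristic, regard the resulting nil (hence Jacobson radical) algebra as a two-sided brace, and deduce torsion of the adjoint group from $(1+r)^{p^{k}}=1+r^{p^{k}}=1$ for nilpotent $r$. Your additional passage to the two-generated subalgebra containing the witnesses of non-Engelness is a reasonable way to secure finite generation from the bare statement of Theorem~\ref{cesareo}; the paper leaves this implicit because its construction is already generated by two elements.
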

 By a result of Gateva-Ivanova (see Theorem $3.7$,  \cite{Tatyana}), every brace $G$  can be considered as a braided group with the involutive braided operator. 
 Moreover, by  Proposition $6.2$, \cite{Tatyana}, $G$ is a two-sided brace if and only if the corresponding  braided group satisfies the following identity:

\[c({ }^{(abc)^{-1}}c)=({ }^{b^{-1}}c)({ }^{((a^{b})({ }^{b^{-1}}c))^{-1}}c),\]
 for every $a, b, c\in G$.
By combining the Gateva-Ivanova result with Corollary \ref{braces}, we obtain that:

\begin{corollary}\label{braidedgroup} There is a countable, braided group $(G, \sigma )$  with an involutive braided operator $\sigma $  which  is a torsion-group and not an Engel group. Moreover,  $G$ satisfies a non-trivial identity   
 \[c({ }^{(abc)^{-1}}c)=({ }^{b^{-1}}c)({ }^{((a^{b})({ }^{b^{-1}}c))^{-1}}c),\]
for all $a, b, c\in G$. We use notation $\sigma (a, b)=({ }^ab, a^{b})$.
\end{corollary}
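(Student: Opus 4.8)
The plan is to transport the two-sided brace produced in \corref{braces} through Gateva-Ivanova's dictionary between braces and braided groups. First I would invoke \corref{braces} to fix a finitely generated two-sided brace $A$ whose multiplicative group $A^{o}$ is a torsion group that is not an Engel group; realising the underlying nil algebra over the prime field $\mathbb{F}_{p}$, as in the proof of \thmref{cesareo}, I would moreover take $A$ to be a finitely generated $\mathbb{F}_{p}$-algebra, hence countable --- and in this setting the torsion property of $A^{o}$ is automatic, since for a nil element $a$ of index $n$ one has $(1+a)^{p^{k}}=1+a^{p^{k}}=1$ as soon as $p^{k}\geq n$.

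Next I would apply Theorem~$3.7$ of \cite{Tatyana}, which presents every brace --- in particular $A$ --- as a braided group $(G,\sigma)$ with an involutive braiding operator: the underlying group $G$ is exactly the adjoint (multiplicative) group $A^{o}$, and $\sigma(a,b)=({}^{a}b,a^{b})$, where ${}^{a}b$ and $a^{b}$ are the left and right actions determined by the brace operations. Since $G\cong A^{o}$ as groups, $G$ is countable, is a torsion group, and is not an Engel group --- these properties passing along the isomorphism --- while $\sigma$ is involutive by construction. This already establishes the first assertion of the corollary.

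Finally, because $A$ is a \emph{two-sided} brace, Proposition~$6.2$ of \cite{Tatyana} applies and shows that the associated braided group $(G,\sigma)$ satisfies the displayed identity
\[
c({}^{(abc)^{-1}}c)=({}^{b^{-1}}c)({}^{((a^{b})({}^{b^{-1}}c))^{-1}}c)
\]
for all $a,b,c\in G$; and this identity is non-trivial, because by the same Proposition~$6.2$ it fails for the braided group attached to any brace which is not two-sided, for instance the left brace of cardinality $6$ arising from Rump's Example~$3$ (that brace cannot be two-sided, since a finite two-sided brace is a nilpotent ring while it satisfies $A^{n}\neq 0$ for all $n$). Combining the three steps proves the corollary. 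I do not expect a genuine obstacle here: all of the substance is already carried by \corref{braces} --- and hence ultimately by \thmref{cesareo} --- together with the Gateva-Ivanova correspondence, so the only points needing attention are the bookkeeping ones: choosing the realisation of $A$ so that it is simultaneously finitely generated over a countable field and torsion, and observing that being a torsion group and failing the Engel condition are invariant under the group isomorphism $G\cong A^{o}$.
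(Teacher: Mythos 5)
Your proposal is correct and follows the same route as the paper: the paper obtains this corollary precisely by feeding the two-sided brace of Corollary~\ref{braces} (built from the nil $\mathbb{F}_p$-algebra of Theorem~\ref{cesareo}) into Gateva-Ivanova's Theorem~$3.7$ and Proposition~$6.2$ of \cite{Tatyana}. Your added details --- the Frobenius argument for torsion, countability from finite generation over $\mathbb{F}_p$, and the justification of non-triviality of the identity via the ``if and only if'' in Proposition~$6.2$ --- are correct fillings-in of steps the paper leaves implicit.
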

 This shows that infinite braided groups satisfying  non-trivial identities can be quite complicated.
 
 We also get the following result for finite braided groups.

\begin{proposition}\label{takeuchi} Let $G$ be a finite nilpotent group and let $(G, \sigma )$ be symmetric group (in the sense of Takeuchi).  
 Let $(G, +,o)$ be a left brace associated to $(G, \sigma )$ as in Theorem $3.8$ in \cite{Tatyana}. Then $(G, +,o)$ is
 a direct sum of left braces whose cardinalites are powers of prime numbers. These braces correspond to Sylow subgroups of $G$.
\end{proposition}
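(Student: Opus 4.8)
The plan is to deduce the statement from Theorem~\ref{1} via the Gateva--Ivanova correspondence, and then to refine the resulting direct-sum decomposition by sorting its summands according to prime.

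First I would invoke Theorem~$3.8$ of \cite{Tatyana}: the left brace $(G,+,o)$ associated to the symmetric group $(G,\sigma)$ has underlying set $G$, and its adjoint (multiplicative) group $(G,o)$ is isomorphic to the given group $G$. Note that it must be $(G,o)$, and not $(G,+)$, that carries the original group structure, since $(G,+)$ is abelian whereas $G$ need not be. Hence, as $G$ is finite nilpotent, the adjoint group of $(G,+,o)$ is finite nilpotent, and Theorem~\ref{1} applies: writing $A:=(G,+,o)$, we get $A^{n}=0$ for some $n$ and a decomposition $A=\bigoplus_{i}A_{i}$ as a direct sum of left braces each of prime-power cardinality.

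Next I would regroup the summands: for each prime $p\mid |G|$ let $B_{p}$ be the direct sum of those $A_{i}$ of $p$-power order. Then each $B_{p}$ is again a left brace, $A=\bigoplus_{p}B_{p}$, and $|B_{p}|$ is a power of $p$; comparing with $|A|=|G|=\prod_{i}|A_{i}|=\prod_{p}|B_{p}|$ and using unique factorisation shows that $|B_{p}|$ is exactly the $p$-part of $|G|$. Since the circle operation on a direct sum of braces is componentwise, $(A,o)=\prod_{p}(B_{p},o)$, so $(B_{p},o)$ is a subgroup of $(A,o)\cong G$ whose order equals the $p$-part of $|G|$, i.e.\ a Sylow $p$-subgroup of $G$; because $G$ is nilpotent this Sylow subgroup is unique and normal, and the identity $(A,o)=\prod_{p}(B_{p},o)$ is precisely the decomposition of $G$ into its Sylow subgroups. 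Thus the braces $B_{p}$ are the summands required by the statement, and they correspond to the Sylow subgroups of $G$.

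The only step that needs genuine care is the first one: correctly reading off which of the two group structures of the brace is identified with $G$ in Gateva--Ivanova's construction, and hence that the hypothesis of Theorem~\ref{1} is met. The remainder --- collecting the $A_{i}$ by prime and matching $(B_{p},o)$ with $\mathrm{Syl}_{p}(G)$ --- is routine bookkeeping. (Alternatively, one could avoid the regrouping by verifying directly, using $A^{n}=0$, that each $p$-primary component of the finite abelian group $(G,+)$ is an ideal of $A$ and that these ideals realise the decomposition; but routing through Theorem~\ref{1} is shorter.)
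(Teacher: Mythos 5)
Your proposal is correct and follows essentially the route the paper intends: read off from Theorem~$3.8$ of \cite{Tatyana} that the adjoint group of $(G,+,o)$ is $G$ itself (the paper leaves the proposition without a separate written proof precisely because it is this observation combined with Theorem~\ref{aggi3}). The only cosmetic difference is that you route through Theorem~\ref{1} and then identify the summands with Sylow subgroups by regrouping and counting orders, whereas quoting Theorem~\ref{aggi3} directly already exhibits the summands as the Sylow subgroups $P_1,\dots,P_r$ of the multiplicative group, making that bookkeeping unnecessary.
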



\section{Braces with $A^{n}=0$ and $A^{(m)}=0$}\label{p}

 In \cite{rump} Rump introduced the following two series of subsets of  any  right brace $A$.
One of the series introduced by Rump is $ \ldots \subseteq  A^{(2)}\subseteq A^{(1)}=A,$ where $A^{(n+1)}= A^{(n)}\cdot A$
 and $A^{(1)}=A$. 
 The other series  introduced by Rump is 
 $\ldots \subseteq A^{2}\subseteq A^{1}=A,$ where $A^{n+1}= A\cdot A^{n}$ and $A^{1}=A$. 
  Following Rump, we will  also use the notation  $A^{n+1}=A\cdot A^{n}$ and $A^{(n+1)}=A^{(n)}\cdot A$ where $A=A^{1}=A^{(1)}$   for a left brace $A$.

 Rump has proved that the series $A^{n}$ of every right brace consists of two-sided ideals \cite{rump}. 
 Similarly, for a left brace $A$, the series $A^{(n)}$ consists of two-sided ideals. Recall that $I$ is an ideal in a brace $A$ if for $i, j\in I$ and $a\in A$ we have $i+j\in I$ and $a\cdot i\in I, i\cdot a\in I$, see \cite{rump}. 

 We propose another series, defined for any left or right brace. This series consists of two-sided ideals in  any left or right brace $A$.
 We define the series
 $\ldots \subseteq A^{[2]} \subseteq A^{[1]}=A,$ where 

\[A^{[n+1]}=\sum_{i=1}^{n}A^{[i]}\cdot A^{[n+1-i]}.\]

 Then it is clear that  $A^{[n]}$ is an ideal in $A$ for every $n$, and $A^{[n+1]}\subseteq A^{[n]}$.

 Recall that for subsets $C, D\subseteq A$ we use notation  \[C\cdot D=\sum_{i=1}^{\infty }c_{i}d_{i}\]
 with $c_{i}\in C, d_{i}\in D$, and almost all $c_{i}, d_{i}$ are zero (so the sums $ \sum_{i=1}^{\infty }c_{i}d_{i}$ are finite).


\begin{theorem}\label{one1}
 Let  $(A, \cdot , +)$ be a left or  right brace. If $m,n$ are natural numbers and $A^{n}=A^{(m)}=0$ then $A^{[s]}=0$ for some number $s$.
\end{theorem}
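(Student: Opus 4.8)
The plan is to work by induction, establishing that once $A^n = 0$ and $A^{(m)} = 0$, the ideals $A^{[s]}$ eventually collapse to zero because each term in the defining sum $A^{[n+1]} = \sum_{i=1}^{n} A^{[i]} \cdot A^{[n+1-i]}$ is forced, for $s$ large, to be a product that is "long on both sides" — long enough as a left-associated product to lie in a high power $A^t$, and long enough as a right-associated product to lie in a high power $A^{(u)}$. The first step is to record the basic containments relating the three chains. From the definitions one checks directly that $A^{[2]} = A \cdot A = A^2 = A^{(2)}$, and more generally by an easy induction that $A^{[k]} \subseteq A^k$ and $A^{[k]} \subseteq A^{(k)}$ for every $k$; indeed if $a \in A^{[i]} \subseteq A^i$ and $b \in A^{[n+1-i]} \subseteq A$ then $a\cdot b \in A^{i+1} \subseteq A^{n+1}$ when... — wait, that only uses one factor. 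The real point is subtler: a product $A^{[i]} \cdot A^{[j]}$ with $i+j = n+1$ has its first factor deep in the $A^{(\cdot)}$-filtration and its second factor deep in the $A^{\cdot}$-filtration, so I need a lemma saying $A^{(i)} \cdot A^{(j)} \subseteq A^{(i+j-1)}$ (which holds since $A^{(i)}$ is a two-sided ideal and $A^{(i)} \cdot A \subseteq A^{(i+1)}$, iterating $j-1$ times moves the left factor's right multiplications up) and dually $A^k \cdot A^\ell \subseteq$ something in the $A^{\cdot}$-chain using that $A^k$ is a two-sided ideal with $A \cdot A^k \subseteq A^{k+1}$.

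The core inductive claim I would isolate is: \emph{for all $s$, $A^{[s]} \subseteq A^{(s)}$ and $A^{[s]} \subseteq A^{s}$.} Granting this, if $A^n = 0$ then $A^{[n]} = 0$ and we are done with $s = n$; similarly $s = m$ works. So in fact the theorem reduces to proving these two containments, and one may take $s = \min(m,n)$. The containment $A^{[s]} \subseteq A^{s}$ goes by strong induction on $s$: for $s = 1$ it is equality; for the step, $A^{[s+1]} = \sum_{i=1}^{s} A^{[i]} \cdot A^{[s+1-i]}$, and by the inductive hypothesis $A^{[i]} \subseteq A^i$ while $A^{[s+1-i]} \subseteq A^{s+1-i} \subseteq A$, so each summand lies in $A^i \cdot A$; now since $A^i$ is a two-sided ideal of the left brace $A$ one has $A^i \cdot A \subseteq A^i$, which is not yet enough — I need $A^i \cdot A^{s+1-i} \subseteq A^{s+1}$. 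For a left brace, $A^{s+1} = A \cdot A^s$, so the natural inequality to push is $A^{[i]} \cdot A^{[s+1-i]} \subseteq A^{s+1}$ proved by a secondary induction: using $A^{[i]} \subseteq A^i$ and peeling factors off the right via $A^{j} \cdot A \subseteq A^{j}$... The cleanest route is probably to prove the symmetric pair of lemmas $A^i \cdot A^{(j)} \subseteq A^{[i+j-1]}$-type statements fails, so instead I establish $A^{[i]}\cdot A^{[j]} \subseteq A^{[i+j-1]}$ directly (immediate from the definition, since that product is one of the summands defining $A^{[i+j-1]}$, after reindexing), giving $A^{[s+1]} \supseteq$ — no. Let me instead just prove: $A^{[s]} \cdot A^{[t]} \subseteq A^{[s+t-1]}$ for all $s,t \geq 1$, which follows immediately by unwinding $A^{[s+t-1]} = \sum_{i=1}^{s+t-2} A^{[i]} \cdot A^{[s+t-1-i]}$ and taking $i = s$ (valid when $1 \le s \le s+t-2$, i.e. $t \ge 2$; the case $t = 1$ says $A^{[s]} \cdot A \subseteq A^{[s]}$, true as $A^{[s]}$ is an ideal). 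This subadditivity shows $A^{[s]}$ contains all products of $s$ elements of $A$ associated in any bracketing, hence $A^{[s]} \supseteq A^s$ and $A^{[s]} \supseteq A^{(s)}$; combined with the reverse containments above we'd get equality — but for the theorem only $\supseteq A^{s}$ is needed together with $A^n = 0 \Rightarrow A^{[n]} \subseteq$ ...

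Putting it together: the argument I would actually write is (1) $A^{[i]}$ is a two-sided ideal (stated already in the excerpt), (2) the key structural lemma $A^{[i]}\cdot A^{[j]}\subseteq A^{[i+j-1]}$, proved by inspection of the defining sum plus the ideal property for the boundary case, (3) a short induction showing $A^{[k]}$ lies inside $A$-products of length $k$ from both sides — concretely $A^{[k]} \subseteq A^{k}$ and $A^{[k]} \subseteq A^{(k)}$ by strong induction using (2) together with $A^{i}\cdot A^{j} \subseteq A^{\max(i,j)}$-type bounds that come from $A^i, A^{(j)}$ being ideals — and (4) conclude $A^{[s]} = 0$ for $s = \min(m,n)$, or even a smaller $s$. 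I expect the main obstacle to be step (3): getting the product of a deep-in-$A^{(\cdot)}$ element with a deep-in-$A^{\cdot}$ element to land in a high $A^{[\cdot]}$ requires carefully tracking that a product $x\cdot y$ with $x \in A^{[i]}$ contributes its "left length" $i$ and $y \in A^{[j]}$ contributes its "right length" $j$ additively, which is exactly what (2) encodes, so once (2) is in hand the rest should be a clean double induction. One subtlety worth flagging: the statement is for left \emph{or} right braces, so I would prove it for left braces and then invoke the opposite-multiplication duality (noted in the introduction) to transfer it, checking that $A^{[n]}$ is self-dual under passing to the opposite multiplication, which is evident from the symmetric form of its definition.
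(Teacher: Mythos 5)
Your central claim --- that $A^{[s]}\subseteq A^{s}$ and $A^{[s]}\subseteq A^{(s)}$, so that one may take $s=\min(m,n)$ --- is false; the containments go the other way. From the definition, $A^{[i]}\cdot A^{[j]}\subseteq A^{[i+j]}$ (it is the summand with index $i$), whence by induction $A^{s}\subseteq A^{[s]}$ and $A^{(s)}\subseteq A^{[s]}$: the chain $A^{[\,\cdot\,]}$ is the \emph{largest} of the three, collecting all bracketings. Already $A^{[3]}=A\cdot A^{2}+A^{2}\cdot A=A^{3}+A^{(3)}$, and since brace multiplication is not associative there is no reason for $A^{(3)}$ to lie inside $A^{3}$. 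You notice the tension yourself when your subadditivity lemma yields $A^{[s]}\supseteq A^{s}$ and $A^{[s]}\supseteq A^{(s)}$, but the ``reverse containments'' you want to combine with were never proved and cannot hold: in Rump's Example 3 (quoted in the introduction, written as a left brace) one has $A^{(3)}=0$ while $A^{k}\neq 0$ for all $k$, so $A^{[3]}\subseteq A^{(3)}$ would force $A^{3}\subseteq A^{[3]}=0$, a contradiction; dually, Example 2 ($A^{4}=0$, $A^{(k)}\neq 0$ for all $k$) refutes $A^{[s]}\subseteq A^{s}$. The same examples show that neither hypothesis alone can annihilate the chain $A^{[s]}$, so any correct proof must make the two hypotheses interact, and no bound as simple as $\min(m,n)$ is available.

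The missing idea --- and the way the paper argues --- is a double induction in which the quotient $A/A^{n}$ controls the $[\,\cdot\,]$-degree of right-hand factors. For a right brace, assume inductively a bound $s_{n,m}$ valid for all braces with $A^{n}=A^{(m)}=0$, and suppose $A^{n+1}=A^{(m)}=0$; take $p>s_{n,m}\cdot m$. Writing $a\in A^{[p]}$ as $\sum_{i}a_{i}b_{i}$ with $b_{i}\in A^{[q_{i}]}$, if some $q_{i}>s_{n,m}$ then the inductive hypothesis applied to the brace $A/A^{n}$ puts $b_{i}$ in $A^{n}$, so $a_{i}b_{i}\in A\cdot A^{n}=A^{n+1}=0$; hence one may assume all $q_{i}\leq s_{n,m}$. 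Expanding the left factors repeatedly in the same way and using right distributivity to regroup, $a$ becomes a sum of left-normed products $(((A\cdot c_{1})\cdot c_{2})\cdots)\cdot c_{m}$, which vanish because $A^{(m)}=0$. Your sketch contains the (correct, but wrongly oriented) subadditivity lemma and the left/right duality remark, but neither the passage to $A/A^{n}$ nor the length bookkeeping with the threshold $p>s_{n,m}\cdot m$ that makes $A^{(m)}=0$ applicable, so the proposal does not prove the theorem.
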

\begin{proof} We will prove the result in the case when $A$ is a right brace, the case when $A$ is a left brace is done by considering the brace with the opposite multiplication. We will proceed by induction on $n$. If $n=2$ then $0=A^{2}=A\cdot A=A^{(2)}=A^{[2]}$, so the result holds. 
 Suppose that there is a natural number $s_{n,m}$ such that any right brace satysfying $A^{n}=0$ and $A^{(m)}=0$
 satisfies $A^{[s_{n,m}]}=0$. 

Assume now that our brace satisfies $A^{n+1}=0$ and $A^{(m)}=0$.
 Let $p> s_{n,m}\cdot m$, and suppose that $a\in A^{[p]}$. Then $a=\sum_{i}a_{i}b_{i}$ for some $a_{i}, b_{i}\in A$ with $a_{i}\in A^{[p-q_{i}]},$ $b_{i}\in A^{[q_{i}]}$, for some numbers $q_{i}$.
 Observe that if $ q_{i}>s_{n,m}$ then $ b_{i}\in A^{n}$  (by the inductive assumption applied to the brace $A/A^{n}$; this brace is well defined as $A^{n}$ is an ideal in $A$).  In this case we get $a_{i}b_{i}\in A\cdot A^{n}=A^{n+1}=0.$ 
 Therefore $q_{i}\leq s_{n,m}$, as otherwise $a_{i}b_{i}=0$. Consequently we can assume that all $q_{i}\leq s_{n,m}$. For each $i$, we can now write $a_{i}=\sum_{i}a_{i,j}b_{i,j}$, and by the same argument as before, we get that each $b_{i,j}\in A^{[r_{i}]}$ for some $r_{i}\leq s_{n,m}$ (as otherwise $b_{i,j}\in A^{n}$ by the inductive assumption applied to $A/A^{n}$, and so $a_{i,j}b_{i,j}\in A^{n+1}=0$). 
 Observe now that since $A$ is a right brace then \[\sum_{i}a_{i}b_{i}=\sum_{i}(\sum _{j}a_{i,j}b_{i,j})b_{i}=\sum_{i,j}(a_{i,j}b_{i,j})b_{i}.\]
 Continuing in this way we get that $a\in \sum_{c_{1}, \ldots ,c_{m}\in A} ((((A\cdot c_{1})\cdot c_{2})\ldots \cdot c_{m-1})\cdot c_{m})$, and since $A^{(m)}=0$ we get that each $a=0$, so $A^{[p]}=0$. 
\end{proof}

\begin{theorem}\label{two2}
Let  $(A, \cdot , +)$ be either a left brace or a right brace. If  $A^{n}=A^{(m)}=0$  for some natural numbers $m,n$, then the multiplicative group of $A$ is a nilpotent group.
\end{theorem}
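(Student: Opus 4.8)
The plan is to imitate the classical argument that a nilpotent Jacobson radical ring (one with $R^{n}=0$) has nilpotent adjoint group, replacing the ideal chain $R^{n}$ by the chain $A^{[n]}$ and using Theorems~\ref{one1} and~\ref{222} to compensate for the fact that a one-sided brace is only an ``approximate'' ring. Since Theorem~\ref{one1} applies to both left and right braces, and since replacing the multiplication of a right brace by the opposite multiplication $a*b=b\cdot a$ yields a left brace with the same chain $A^{[n]}$ (as already noted in the excerpt) and with adjoint group anti-isomorphic to $A^{o}$, while the opposite of a nilpotent group is again nilpotent, it suffices to treat a left brace $A$. By Theorem~\ref{one1}, $A^{[s]}=0$ for some $s$.

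First I would record the two elementary facts: $A^{[i]}\cdot A^{[j]}\subseteq A^{[i+j]}$ for all $i,j\ge 1$ (this is the $l=i$ summand in the defining formula for $A^{[i+j]}$), and each $A^{[k]}$ is a two-sided ideal with $A^{[k+1]}\subseteq A^{[k]}$, as noted in the excerpt. Next I would check that the multiplication and the circle operation descend to the quotient $A/A^{[k]}$, so that it is again a left brace and the canonical surjection $\pi_{k}\colon A^{o}\to (A/A^{[k]})^{o}$ is a group homomorphism whose kernel is the subgroup $G_{k}:=(A^{[k]},\circ)$ of $A^{o}$; in particular each $G_{k}$ is normal in $A^{o}$. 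The only non-formal point here is that $(a+i)b\equiv ab\pmod{A^{[k]}}$ for $i\in A^{[k]}$, $b\in A$, and this is precisely where Theorem~\ref{222} enters: with $a,b\in A=A^{[1]}$ it gives $(a+i)b-ab-ib\in A^{[k+2]}\subseteq A^{[k]}$, while $ib\in A^{[k]}\cdot A\subseteq A^{[k]}$.

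The key estimate is $[G_{i},G_{j}]\subseteq G_{i+j}$. To prove it, fix $x\in A^{[i]}$ and $y\in A^{[j]}$ and pass to $\bar A:=A/A^{[i+j]}$. Since $xy\in A^{[i]}\cdot A^{[j]}\subseteq A^{[i+j]}$ and, likewise, $yx\in A^{[i+j]}$, the images satisfy $\bar x\,\bar y=\bar y\,\bar x=0$, hence $\bar x\circ\bar y=\bar x+\bar y=\bar y+\bar x=\bar y\circ\bar x$; thus $\bar x$ and $\bar y$ commute in $\bar A^{o}$, and therefore the commutator $x^{-1}\circ y^{-1}\circ x\circ y$ lies in $\ker\pi_{i+j}=A^{[i+j]}=G_{i+j}$. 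Taking $i=1$ gives $[A^{o},G_{k}]\subseteq G_{k+1}$, so that
\[
A^{o}=G_{1}\supseteq G_{2}\supseteq\cdots\supseteq G_{s}=\{0\}
\]
is a central series; hence $A^{o}$ is nilpotent of class at most $s-1$. Together with the first paragraph this gives the theorem.

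As for difficulty: since Theorems~\ref{one1} and~\ref{222} may be assumed, no step presents a real obstacle — the argument is the standard filtration/central-series computation. The one place needing genuine care, and the reason Theorem~\ref{222} was established, is verifying that the quotients $A/A^{[k]}$ are honest left braces (equivalently, that each $A^{[k]}$ is a normal subgroup of $A^{o}$); if one wished to bypass the quotient language entirely, the same two facts — normality of $G_{k}$ and $[G_{i},G_{j}]\subseteq G_{i+j}$ — can be obtained by a more tedious but direct manipulation of the identities $x\circ y=xy+x+y$ and $x^{-1}=-x-x^{-1}x$, again invoking Theorem~\ref{222} at each point where right-distributivity would otherwise be needed.
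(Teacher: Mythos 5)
Your proof is correct and follows essentially the same route as the paper: both invoke Theorem~\ref{one1} to obtain $A^{[s]}=0$ and then show that the chain $A^{[k]}$ produces a central series of $A^{o}$ --- the paper by induction on $s$ using that $A^{[s-1]}$ is central, you by verifying $[G_{i},G_{j}]\subseteq G_{i+j}$ directly. Your explicit appeal to Theorem~\ref{222} to check that the quotients $A/A^{[k]}$ are well-defined braces (hence that each $G_{k}$ is normal in $A^{o}$) is a point the paper's proof leaves implicit.
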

\begin{proof} Let $a, b\in A$, then $[a,b]=a\circ b\circ a^{-1}\circ b^{-1}$ where 
$a^{-1}$ and $b^{-1}$ are inverses of $a$ and $b$ respectively in the adjoint group $A^{o}$. We will construct a finite lower central series of $A^{o}$. By Theorem \ref{one1} 
 there is $s$ such that $A^{[s]}=0$.
We proceed by induction on $s$. If $A^{[2]}=0$ then $A$ is commutative so the result holds. Suppose that the result holds for all numbers smaller than $s$; by the inductive assumption
 applied to $A'=A/A^{[s-1]}$ we get  
$[[[[A, A]A]\ldots ]A]\in A^{[s-1]}$ ($m$ brackets for some $m$). Since $A^{[s-1]}$ is in the center of $A$ we get that 
$[[[[A, A]A]\ldots ]A]=0$ ($m+1$ brackets), hence 
$A$ has a finite lower central series.
\end{proof}

\begin{theorem}\label{55} Let $A$ be a left brace such that $A^{(n)}=0$ for some $n$. If the multiplicative group of $A$ is nilpotent then $A^{m}=0$ for some $m$, and hence $A^{[s]}=0$ for some $s$. 
\end{theorem}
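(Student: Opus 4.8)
The plan is to pass to a suitable finite quotient and then invoke Theorem \ref{two2} together with Rump's result that a left brace with a finite $p$-group as adjoint group satisfies $A^m = 0$. First I would reduce to the case where $A$ is finitely generated as a brace: if $A^{(n)} = 0$ and $A^o$ is nilpotent of class $c$, it suffices to show that every finitely generated sub-brace $B$ of $A$ satisfies $B^{[s]} = 0$ with a bound $s$ independent of $B$, because then $A^{[s]} = 0$ as well (each element of $A^{[s]}$ lies in the $s$-th term of some finitely generated sub-brace, using the description of $A^{[s]}$ via finite sums of products). Note that $B$ inherits $B^{(n)} = 0$ from $A$, and $B^o$, being a subgroup of the nilpotent group $A^o$, is nilpotent of class at most $c$.

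Next I would argue that a finitely generated left brace $B$ with $B^{(n)} = 0$ and $B^o$ nilpotent is in fact finite. The point is that $B^{(n)} = 0$ forces the additive group to be built from finitely many ``layers'' $B^{(i)}/B^{(i+1)}$, each of which is a finitely generated abelian group on which $B$ acts; but a finitely generated nilpotent group that is also (via the circle operation) realized on such a radical-type structure must be torsion — here I would want to quote that for a left brace, $(a \circ b) - a - b = ab$ and iterate, showing that any element's additive order divides its behaviour under multiplication, so together with $B^{(n)}=0$ and finite generation one gets that $B$ has bounded exponent, hence is finite. (Alternatively: a finitely generated nilpotent torsion group is finite, so it is enough to see $B^o$ is torsion, which follows because $1 + \lambda$-type elements built from nilpotent multiplication data have finite order.) Once $B$ is finite, write $B = \bigoplus_p B_p$ as a direct sum of braces of prime-power order (this decomposition for finite braces with nilpotent adjoint group is exactly the ``moreover'' part of Theorem \ref{1}, or can be obtained directly since the additive group is a finite abelian group and the ideals $B^{(i)}$ respect the primary decomposition). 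Then each $B_p$ has adjoint group a finite $p$-group, so by Rump's Corollary after Proposition 8 in \cite{rump}, $B_p^{m_p} = 0$ for some $m_p$; taking $m = \max_p m_p$ gives $B^m = 0$, and then Theorem \ref{one1} applied to $B$ (which has $B^m = B^{(n)} = 0$) yields $B^{[s]} = 0$ with $s = s_{m,n}$ depending only on $m$ and $n$, hence only on $c$ and $n$ via the bound on $m$.

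Finally, with the uniform bound $s = s_{m,n}$ in hand, I would conclude $A^{[s]} = 0$ for the original brace $A$, and then $A^m = 0$ follows because $A^m \subseteq A^{[m]} = 0$ once $m \geq s$ (indeed $A^m \subseteq A^{[m]}$ for every $m$, since $A^{[n+1]} = \sum_{i=1}^n A^{[i]} \cdot A^{[n+1-i]} \supseteq A \cdot A^{[n]}$, so by induction $A^{[n]} \supseteq A^n$). The main obstacle I expect is the reduction step showing that a finitely generated left brace with nilpotent adjoint group and $B^{(n)} = 0$ is finite — more precisely, extracting a bound on the exponent of $B$ that depends only on the nilpotency class $c$ and on $n$, so that the resulting $m$ (and hence $s$) is uniform across all finitely generated sub-braces. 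If a uniform bound on $m$ is not readily available, an alternative is to work with $A$ directly: show $A$ itself, though possibly infinite, still decomposes appropriately or still has the property that its adjoint group being nilpotent of class $c$ forces $A^{[s]} = 0$ for $s$ depending on $c$ and $n$, perhaps by a more careful induction on the nilpotency class mirroring the proof of Theorem \ref{two2} run in reverse.
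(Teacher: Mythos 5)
There is a genuine gap, and it sits exactly where you suspected: your claim that a finitely generated left brace $B$ with $B^{(n)}=0$ and $B^{o}$ nilpotent must be finite is false. The trivial brace on $\mathbb{Z}$ (all products zero, so $a\circ b=a+b$) is generated as a brace by the single element $1$, satisfies $B^{(2)}=0$, and has adjoint group $(\mathbb{Z},+)$, which is nilpotent but torsion-free and infinite; in particular your supporting assertion that the adjoint group is torsion (the ``$1+\lambda$-type elements have finite order'' step) already fails in this example. Since your whole strategy funnels the infinite case through finite sub-braces and the prime-power decomposition, this breaks the reduction. Moreover, even where finitely generated sub-braces do happen to be finite, the bound supplied by Rump's Corollary (or by Theorem \ref{aggi3}) has the shape $m_p=\alpha_p+1$ with $p^{\alpha_p}$ the order of the Sylow component, so $m$ depends on the cardinality of $B$ and is not uniform over all finitely generated sub-braces; without that uniformity you cannot pass back to $A$. (The sound parts of your argument are the local-to-global step for $A^{[s]}$, the inclusion $A^{m}\subseteq A^{[m]}$, and the final appeal to Theorem \ref{one1}.)

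The paper's proof avoids finiteness entirely: it takes $n$ minimal with $A^{(n)}=0$, notes that $A^{(n-1)}\subseteq Soc(A)$ and is an ideal, and computes directly that $[a,b]=a\circ b\circ a^{-1}\circ b^{-1}=ab$ for $a\in A$ and $b\in A^{(n-1)}$. Nilpotency of $A^{o}$ then makes the iterated commutators, i.e.\ the iterated left multiplications $a_{m}(a_{m-1}(\cdots(a_{1}b)))$, vanish, so $A(A(\cdots A(A^{(n-1)})\cdots))=0$; the inductive hypothesis applied to $A/A^{(n-1)}$ gives $A^{n'}\subseteq A^{(n-1)}$, hence $A^{m+n'}=0$, and Theorem \ref{one1} yields $A^{[s]}=0$. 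Any repair of your proposal would need a direct argument of this kind anyway, which is essentially what your fallback suggestion (working with $A$ itself by an induction mirroring Theorem \ref{two2} run in reverse) amounts to.
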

\begin{proof} By assumption $A^{(n)}=0$ for some $n$. We can assume that $n$ is minimal possible.
 Let $b\in A^{(n-1)}$, $a\in A$ and let $a^{-1}$ and $b^{-1}$ be the inverses of respectively $a$ and $b$ in the adjoint group $A^{o}$.   
Recall that $A^{o}$ is the group under the circle operation $a\circ b=ab+a+b$. 
 We will show that \[a\circ b \circ a^{-1} \circ b^{-1}=ab.\]

Note that $A^{(n-1)}\subseteq Soc(A)=\{ x\in A\mid x\circ a=x+a\}$. By
[38, Corollary after Proposition 6], $A^{(n-1)}$ is an ideal. Hence
$A^{(n-1)}$ is a normal subgroup of the multiplicative group of the left
brace $A$. Let $b\in A^{(n-1)}$ and $a\in A$.
Since $0=b\circ b^{-1}=b+b^{-1}$, we have that $b^{-1}= -b$.
\begin{eqnarray*}
[a,b]&=&a\circ b\circ a^{-1}\circ b^{-1}\\
&=&a\circ b\circ a^{-1} +b^{-1}\quad (\mbox{since }   a\circ b\circ
a^{-1}\in A^{(n-1)})\\
&=&a\circ (b+ a^{-1})-b\\
&=&a\circ b +a\circ a^{-1}-a-b\\
&=&a\circ b-a-b\\
&=&ab
\end{eqnarray*}

Therefore $[a,b]=a\circ b \circ a^{-1} \circ b^{-1}=ab.$

Since the multiplicative group $A^{o}$ of $A$ is nilpotent we get 
\[[a_{m} [\ldots [a_{2} [a_{1},b_{1}]]]]=0\]
 for some $m$. Therefore $[a_{m} [\ldots [a_{2} [a_{1},b_{1}]]]]=a_{m}(a_{m-1}(\ldots (a_{2}(a_{1}b)))$.
  Consequently $ A(A(\ldots A(A^{(n-1)})))=0$ ($m$ brackets).

We will now apply this result to prove our theorem; we will use induction on $n$ (recall that $n$ is such that  $A^{(n)}=0$).
 For $n=2$ the result holds since $A^{(2)}=A^{2}=A^{[2]}$.
Suppose now that the result holds for all numbers smaller than $n$: so if $B$ is a left brace and $B^{(n-1)}=0$ and the adjoint group of $B^{o}$ is nilpotent then $B^{(n')}=0=B^{[n']}$ for some $n'$.
 
 Recall that $A^{(n-1)}$ is an ideal in $A$, and hence a normal subgroup of $A^{o}$ \cite{rump, cjo}, hence the adjoint group of  brace $A/A^{(n-1)}$ is nilpotent. We can apply the inductive assumption  for the  brace $B'=A/A^{(n-1)}$ and we get that $B^{n'}=0$ hence $A^{n'}=A(A(\ldots A)))\subseteq  A^{(n-1)}$.
 Therefore $A^{m+n'}\subseteq A(A(\ldots A(A^{(n-1)})))=0$. By Theorem \ref{one1} we get that  $A^{[s]}=0$ for some $s$. 
\end{proof}
Let us remark that the first part of the above proof was provided by Ferran Ced{\' o} after 
 reading the original proof in the first version in this manuscript. 
\section{ Structure of left  braces with $A^{n}=0$} 

 In this section we observe some connections between nilpotent braces and nilpotent rings.
  We start with the following lemma.

\begin{lemma}\label{fajny}
 Let $s$ be a natural number and let $A$ be a left brace such that $A^{s}=0$ for some $s$.
 Let $a, b\in A$.  
Define inductively elements $d_{i}=d_{i}(a,b), d_{i}'=d_{i}'(a, b)$  as follows:
$d_{0}=a$, $d_{0}'=b$, and for $i\leq 1$ define $d_{i+1}=d_{i}+d_{i}'$ and $d_{i+1}'=d_{i}d_{i}'$.
 Then for every $c\in A$ we have
\[(a+b)c=ac+bc+\sum _{i=0}^{2s} (-1)^{i+1}((d_{i}d_{i}')c-d_{i}(d_{i}'c)).\]
\end{lemma}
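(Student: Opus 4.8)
The plan is to exploit the one structural identity available in a left brace, namely the left-distributivity $a(b+c)=ab+ac$ together with the associativity constraint $(ab+a+b)c=a(bc)+ac+bc$, which rearranges to the key ``defect'' identity
\[
(a+b)c - ac - bc \;=\; (ab)c - a(bc).
\]
Applying this with the pair $(a,b)$ in the role of the two summands gives
\[
(a+b)c = ac + bc + \bigl((ab)c - a(bc)\bigr) = ac+bc + (d_0 d_0')c - d_0(d_0' c),
\]
which is the base of a telescoping. The idea is then to re-expand the term $(d_0 d_0')c$: writing $d_1 = d_0+d_0'$ and $d_1' = d_0 d_0'$, we have $d_1' c = (d_0 d_0')c$, and we want instead to understand $d_0(d_0' c)$, so we apply the defect identity again but now to rewrite a product $(x+y)c$ where the split is chosen so that one of the new error terms is $(d_1 d_1')c - d_1(d_1' c)$. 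Concretely, at stage $i$ one rewrites the ``troublesome'' product using $(a+b)c$-type manipulation on $d_i, d_i'$, producing a new error term $\pm\bigl((d_{i+1}d_{i+1}')c - d_{i+1}(d_{i+1}' c)\bigr)$ plus lower-order pieces that match what is already on the right-hand side; the alternating sign $(-1)^{i+1}$ comes from the sign flip at each re-expansion.

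The termination at $i = 2s$ is the point where the brace hypothesis $A^s = 0$ enters. The sequence $d_i' = d_{i-1}d_{i-1}'$ has the feature that $d_i'$ is a product of $i+1$ (or at least a growing number of) factors drawn from $A$, so once $i$ is large enough, $d_i' \in A^s = 0$, at which point $d_i' c = 0$ and $d_i d_i' = 0$, killing the error term and closing the telescoping. One should check the precise count: with $d_0' = b \in A = A^1$ and $d_{i+1}' = d_i d_i' \in A \cdot A^{?}$, each step increases membership by at least one in the lower central ring-filtration $A^n$, and a bound of $2s$ (rather than $s$) is comfortably safe because the $d_i$'s themselves also eventually land in high powers of $A$ — I would verify that $d_i \in A$ always and $d_i' \in A^{\lceil (i+2)/2\rceil}$ or some such estimate, so that $d_{2s}' = 0$ certainly holds. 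The upper limit $2s$ is presumably chosen to be safely large rather than optimal.

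The main obstacle I anticipate is bookkeeping the induction on $i$ so that, at each re-expansion, the ``lower-order'' terms produced really do cancel against or reproduce exactly the summand with index $i$ already present, leaving only the single new error term with index $i+1$; this requires choosing the right application of left-distributivity at each step (applying it to $d_i(d_i' c)$ versus $(d_i d_i')c$, and deciding which variable plays the additive role). I would set this up as: prove by induction on $k$ that
\[
(a+b)c = ac+bc+\sum_{i=0}^{k-1}(-1)^{i+1}\bigl((d_i d_i')c - d_i(d_i' c)\bigr) + (-1)^{k}\bigl(\text{a single product term in }d_k, d_k'\bigr),
\]
with the inductive step being one application of the defect identity, and then take $k = 2s+1$ and observe that the trailing term vanishes because $d_{2s}' \in A^s = 0$. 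The verification that the remainder term at stage $k$ has exactly the shape needed to feed the next step is the one genuinely delicate computation; everything else is routine expansion using only $a(b+c)=ab+ac$ and the additive abelian structure.
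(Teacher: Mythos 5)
Your plan has the right overall shape (an induction producing one new error term per step, terminated by the nilpotency of the $d_i'$), but it is founded on a false identity. In a left brace the ``defect identity''
\[
(a+b)c-ac-bc=(ab)c-a(bc)
\]
is \emph{not} a rearrangement of the associativity constraint $(ab+a+b)c=a(bc)+ac+bc$: to pass from $(ab+a+b)c$ to $(a+b)c+(ab)c$ you must split the left factor additively, i.e.\ use right distributivity, which is exactly the property a left brace lacks (and whose failure the lemma is quantifying). What the brace axiom actually gives, writing $D(x,y;c)=(x+y)c-xc-yc$, is
\[
D(x,y;c)=x(yc)-(xy)c-D(x+y,\,xy;\,c),
\]
so the two-term expression is only correct \emph{modulo a new defect} in the pair $(x+y,xy)=(d_1,d_1')$; it is this correction term, iterated through $(d_i,d_i')$, that generates the alternating sum, and it only disappears after finitely many steps because $d_i'\in A^{i}$ forces $d_i'=0$ once $i\geq s$. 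A symptom of the error is that your ``base of the telescoping'' reads $(a+b)c=ac+bc+(ab)c-a(bc)$, whose correction term has the opposite sign to the $i=0$ summand $(-1)^{1}\bigl((d_0d_0')c-d_0(d_0'c)\bigr)=a(bc)-(ab)c$ of the formula you are trying to prove, and is also inconsistent with your own final induction hypothesis (taking $k=1$ there gives a minus sign plus a remainder, not a plus sign with no remainder). Your description of the remainder as ``a single product term in $d_k,d_k'$'' is likewise not right: it is the defect $D(d_k,d_k';c)$, not a product.

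Once the key step is replaced by the correct recursion above (equivalently, by the identity $(x\circ y)c=x(yc)+xc+yc$ applied to $d_{i+2}=d_i\circ d_i'$, which is how the paper proceeds, via a telescoping sum of the differences $d_{i+1}c-d_{i+2}c$), your inductive scheme does close up and yields the lemma, with any upper limit $\geq s$ in place of $2s$. So the gap is not in the architecture but in the one identity doing all the work: as stated it presupposes right distributivity and is false in a general left brace.
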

\begin{proof} Observe first that by an inductive argument $d_{i}'\in A^{i}$ for each $i$.
  Observe that for $i\geq 1$ we have 
\[d_{i+1}\cdot c=(d_{i}+d_{i}')\cdot c=((d_{i-1}+d_{i-1}')+d_{i-1}d_{i-1}')\cdot c.\]
 Recall that since $A$ is a left brace then 
\[d_{i+1}c=(d_{i-1}+d_{i-1}'+d_{i-1}d_{i-1}')\cdot c=d_{i-1}c+d_{i-1}'c+d_{i-1}(d_{i-1}'c).\]
 The same holds when we increase $i$ by $1$, hence 
 $d_{i+2}c=d_{i}c+d_{i}'c+d_{i}(d_{i}'c).$
 Subtracting the above equation from the previous one we get
\[d_{i+1}c-d_{i+2}c=(d_{i-1}c-d_{i}c)+e_{i}\]
 where $e_{i}=d_{i-1}'c-d_{i}'c+d_{i-1}(d_{i-1}'c)-d_{i}(d_{i}'c).$
 Observe that 
\[e_{i}=(d_{i-2}d_{i-2}')c-(d_{i-1}d_{i-1}')c+d_{i-1}(d_{i-1}'c)-d_{i}(d_{i}'c).\]
 Therefore, 
\[\sum_{i=1}^{s}e_{2i}=\sum_{i=1}^{s}(d_{2i-2}d_{2i-2}')c-(d_{2i-1}d_{2i-1}')c+d_{2i-1}(d_{2i-1}'c)-d_{2i}(d_{2i}'c)\]
 Notice that if $i\geq s$ then $d_{i}'\in A^{s}=0$. Therefore
$\sum_{i=1}^{s}e_{2i}= (d_{0}d_{0}')c+ q$ where 
\[q=\sum_{i=1}^{s} d_{2i-1}(d_{2i-1}'c)-(d_{2i-1}d_{2i-1}')c -\sum_{i=1}^{s} d_{2i}(d_{2i}'c)-(d_{2i}d_{2i}')c.\]

 Observe now that $d_{i+1}c-d_{i+2}c=(d_{i-1}c-d_{i}c)+e_{i}$
 implies \[\sum_{i=1}^{s}(d_{2i+1}c-d_{2i+2}c)=\sum_{i=1}^{s}(d_{2i-1}c-d_{2i}c)+\sum_{i=1}^{s}e_{2i},\] therefore 
 \[d_{2s+1}c-d_{2s+2}c=d_{1}c-d_{2}c+\sum_{i=1}^{s}e_{2i}.\]
  Observe that $d_{2s+2}=d_{2s+1}+d_{2s+1}'=d_{2s+1}$ since $d_{2s+1}'\in A^{s}=0$.
 Consequently $d_{2}c-d_{1}c= \sum_{i=1}^{s}e_{2i}= (d_{0}d_{0}')c+q.$
 Recall that $d_{0}=a$, $d_{0}'=b$, $d_{1}=a+b$, $d_{1}'=ab$ and $d_{2}=a+b+ab$. Therefore  $d_{1}c=(a+b)c$ and $d_{2}c=(a+b+ab)c=ac+bc +a(bc)$. 
 Consequently $d_{1}c=d_{2}c- (d_{0}d_{0}')c-q.$
 It follows that $(a+b)c=ac+bc+ a(bc)-(d_{0}d_{0}')c-q=ac+bc+d_{0}(d_{0}'c)-(d_{0}d_{0}')c-q$.
 Notice that $d_{0}(d_{0}'c)-(d_{0}d_{0}')c-q=\sum _{i=0}^{2s} (-1)^{i+1}((d_{i}d_{i}')c-d_{i}(d_{i}'c))$, which finishes the proof.
\end{proof}

For an element $a\in A$ and a natural number $i$ by \[i\cdot a\] we will denote the sum of $i$ copies of element $a$ (hence $0\cdot a=0$).

\begin{lemma}\label{8} Let the assumptions and notation be as in Lemma \ref{fajny}. 
  Suppose that there is a natural number $m$ such that  $m\cdot a=m\cdot b=0$. Let $d_{i}, d_{i}'$ be as in Lemma \ref{fajny};
 then $m\cdot d_{i}=m\cdot d'_{i}=0$ for every $i\geq 1$.
\end{lemma}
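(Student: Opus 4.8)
The plan is to reduce everything to one elementary identity describing how the $n$-fold sum interacts with the brace multiplication on its right-hand factor, and then run a short induction on $i$.

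First I would record the auxiliary fact that for all $x,y\in A$ and every natural number $k$ one has $x\cdot(k\cdot y)=k\cdot(xy)$, together with $x\cdot 0=0$. Both follow at once from left distributivity $x(u+v)=xu+xv$: the second because $x\cdot 0=x\cdot(0+0)=x\cdot 0+x\cdot 0$, and the first by induction on $k$, writing $x\cdot((k+1)\cdot y)=x\cdot(k\cdot y+y)=x\cdot(k\cdot y)+xy$. The consequence I actually need is: \emph{if $k\cdot y=0$ then $k\cdot(xy)=x\cdot(k\cdot y)=x\cdot 0=0$}. Note that right distributivity is never used here, which matters since $A$ need not be a two-sided brace; in the induction the factor that gets annihilated is always the right one.

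Then I would prove the lemma by induction on $i\ge 1$. For the base case $i=1$ we have $d_{1}=a+b$ and $d_{1}'=ab$, so $m\cdot d_{1}=m\cdot a+m\cdot b=0$ by hypothesis, and $m\cdot d_{1}'=m\cdot(ab)=a\cdot(m\cdot b)=a\cdot 0=0$ by the auxiliary fact applied with $y=b$. For the inductive step, suppose $m\cdot d_{i}=m\cdot d_{i}'=0$ for some $i\ge 1$. Since $d_{i+1}=d_{i}+d_{i}'$ we get $m\cdot d_{i+1}=m\cdot d_{i}+m\cdot d_{i}'=0$, and since $d_{i+1}'=d_{i}d_{i}'$ we get $m\cdot d_{i+1}'=m\cdot(d_{i}d_{i}')=d_{i}\cdot(m\cdot d_{i}')=d_{i}\cdot 0=0$, again by the auxiliary fact, now with $x=d_{i}$ and $y=d_{i}'$. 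This closes the induction.

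I do not anticipate a serious obstacle: the only point that requires a moment's thought is to put the ``scalar'' $m$ on the right factor of each product $d_{i}d_{i}'$ — which is exactly where the inductive hypothesis places the annihilated element — so that only left distributivity (the axiom available in an arbitrary left brace) is needed. The hypothesis $A^{s}=0$ inherited from Lemma~\ref{fajny} plays no role in this particular statement.
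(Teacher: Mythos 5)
Your proof is correct and follows essentially the same route as the paper's: the key point in both is the identity $m\cdot(xy)=x\cdot(m\cdot y)$ coming from left distributivity, applied so that the annihilated element sits in the right-hand factor, followed by induction on $i$. The only cosmetic difference is that you run a single simultaneous induction on the pair $(d_i,d_i')$ while the paper first handles the primed sequence and then the unprimed one.
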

\begin{proof}  We will first show that $m\cdot d_{t}'=0$ for every $t\geq 0$. For $t=0$ we have $m'\cdot d_{o}'=m\cdot b=0$. Suppose the result holds for some $t\geq 0$, then 
  $m\cdot d_{t+1}'=m\cdot (d_{t}d_{t}')=d_{t}(m\cdot d_{t}')=0$.

 We will now that $m\cdot d_{t}=0$ for all $t\geq 0$. For $t=0$ we have $m\cdot d_{0}=m\cdot a=0$. 
   Suppose the result holds for some $t\geq 0$, then 
  $m\cdot d_{t+1}=m\cdot d_{t}+m\cdot d_{t}'=0$ by the inductive assumption.
 \end{proof}

 Let $A$ be a left brace and let $S, Q\subseteq A$ be additive subgroups of $A$. Then we denote $S+Q=\{s+q: s\in S, q\in Q\}$.   
 \begin{lemma}\label{56}
 Let $(A, +, \cdot )$ be a finite left brace of cardinality 
$p_{1}^{\alpha _{1}}\ldots p_{k}^{\alpha _{k}}$ for some prime pairwise distinct numbers $p_{1}, \ldots , p_{k}$ and natural numbers $\alpha _{1}, \ldots , \alpha _{k}$. Then  \[A=A_{1}+A_{2}+\ldots +A_{k}\] where $A_{i}$ is the additive subgroup  of the additive group  $(A, +)$ of cardinality $p_{i}^{\alpha _{i}}$ for every $i\leq k$. Moreover, $(A_{i}, +,\cdot)$ is a brace for each $i\leq k$. 
\end{lemma}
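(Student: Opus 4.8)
The plan is to take $A_i$ to be the $p_i$-primary component of the finite abelian group $(A,+)$, that is,
\[A_i=\{a\in A:\ p_i^{\alpha_i}\cdot a=0\},\]
and to show that this additive subgroup is in fact a sub-brace of $A$. First I would invoke the standard primary (Sylow) decomposition of a finite abelian group: since the $p_i$ are pairwise distinct and $|A|=p_1^{\alpha_1}\cdots p_k^{\alpha_k}$, we have $(A,+)=A_1\oplus A_2\oplus\cdots\oplus A_k$ with $|A_i|=p_i^{\alpha_i}$, and $A_i$ is the \emph{unique} subgroup of $(A,+)$ of cardinality $p_i^{\alpha_i}$ (any subgroup of that order is a $p_i$-group, hence lies inside the Sylow $p_i$-subgroup $A_i$, and has the same cardinality, so equals it). This already yields $A=A_1+A_2+\cdots+A_k$, the first assertion.

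Next I would establish closure of $A_i$ under the brace multiplication, which is the only step that is not completely routine. From the left distributive law $a(b+c)=ab+ac$ one obtains, by an easy induction on $m$, that $a\cdot(m\cdot b)=m\cdot(a\cdot b)$ for every natural number $m$; moreover $a\cdot 0=a\cdot(0+0)=a\cdot 0+a\cdot 0$ forces $a\cdot 0=0$. Hence, for $a,b\in A_i$,
\[p_i^{\alpha_i}\cdot(a\cdot b)=a\cdot(p_i^{\alpha_i}\cdot b)=a\cdot 0=0,\]
so $a\cdot b\in A_i$. Thus $\cdot$ restricts to a multiplication $A_i\times A_i\to A_i$, and this restriction is still left distributive because the operations are inherited from $A$.

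Finally I would check that the circle operation makes $A_i$ into a group. Since $a,b\in A_i$ implies $ab\in A_i$ and $a+b\in A_i$, we get $a\circ b=ab+a+b\in A_i$, so $A_i$ is closed under $\circ$; it contains the identity $0$ of $(A,\circ)$, and $\circ$ is associative on $A_i$ since it is associative on all of $A$. A finite subset of a group that contains the identity and is closed under the group operation is a subgroup: for $a\in A_i$ the powers of $a$ under $\circ$ all lie in $A_i$ and cannot be pairwise distinct, so some power of $a$ equals $0$, whence the $\circ$-inverse of $a$ also lies in $A_i$. Therefore $(A_i,\circ)$ is a group and $(A_i,+,\cdot)$ is a left brace, which finishes the proof. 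The decomposition of $(A,+)$ and the verification of the brace axioms on $A_i$ are routine; the one place where the brace structure genuinely enters is the closure of $A_i$ under $\cdot$, via the identity $a\cdot(m\cdot b)=m\cdot(a\cdot b)$.
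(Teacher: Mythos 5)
Your proposal is correct and follows essentially the same route as the paper: primary decomposition of $(A,+)$, closure of each $A_i$ under $\cdot$ via the identity $p\cdot(xy)=x\cdot(p\cdot y)$ coming from left distributivity, and the observation that in a finite brace the $\circ$-inverse of $a$ is a $\circ$-power of $a$, hence stays in $A_i$. No gaps.
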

\begin{proof}
 Since the additive group of $A$ is a finite abelian group,  then using the primary decomposition theorem we can decompose the additive group $(A, +)$  into a  sum of additive subgroups of $A$; we can call them $A_{1}, \ldots , A_{k}$, where $A_{i}$ is an additive subgroup of $A$ of cardinality  $p_{i}^{\alpha _{i}}$ and $A_{i}\cap A_{j}=0$. Observe  that if $x,y\in A$ and $p\cdot y=0$ for some natural number $p$ then $p\cdot (xy)=x \cdot (py)=0$. 
 Therefore if $a, a'\in A_{i}$ then $a\cdot a'\in A_{i}$, hence $A_{i}$ is closed under the multiplication. We know that $A_{i}$ is closed under the addition, hence it is also closed under the operation $\circ $, where $a\circ b=a\cdot b+a+b$ for $a,b\in A$. 
   Observe that since $A$ is a finite group, then the inverse of $a\in A$ in the adjoint group $A^{o}$ is of the form $a\circ a \circ \cdots \circ a$, hence it belongs to $A_{i}$. It follows that $A_{i}$ is a left brace. 
\end{proof}
\begin{theorem}\label{aggi1} 
Let $A$ be a finite left brace such that $A^{n}=0$ for some $n$. Then $A$ is the direct sum of 
 of braces whose cardinalities are powers of prime numbers. In particular, the adjoint group $A^{o}$ of $A$ is a nilpotent group.
\end{theorem}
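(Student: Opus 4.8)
The plan is to marry the additive primary decomposition supplied by Lemma~\ref{56} with the ``$\lambda$-map'' of the brace and the hypothesis $A^{n}=0$. First I would invoke Lemma~\ref{56} to write $(A,+)=A_{1}+\cdots +A_{k}$ as the decomposition of the finite abelian group $(A,+)$ into its $p_{i}$-primary components, each $(A_{i},+,\cdot)$ being a sub-brace of cardinality $p_{i}^{\alpha_{i}}$ with $A_{i}\cap A_{j}=0$ for $i\ne j$. It then remains to upgrade this additive decomposition to a decomposition of $A$ as a direct sum of braces, i.e.\ to prove that $A_{i}\cdot A_{j}=0$ whenever $i\ne j$; once that is done the final assertion is immediate, since $A^{o}$ becomes the direct product of the groups $A_{i}^{o}$, each of which has prime-power order and is therefore nilpotent, and a finite direct product of nilpotent groups is nilpotent.

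The key tool for the vanishing of cross products is the map $\lambda_{a}\colon A\to A$, $\lambda_{a}(b)=ab+b$. Left distributivity says precisely that the left multiplication $L_{a}\colon b\mapsto ab$ is an additive endomorphism of $(A,+)$, hence so is $\lambda_{a}=\mathrm{id}+L_{a}$; moreover $L_{a}^{r}(b)\in A^{r+1}$, so $A^{n}=0$ makes $L_{a}$ nilpotent and $\lambda_{a}$ an automorphism, and the left-brace identity $(ab+a+b)c=a(bc)+ac+bc$ quoted in Section~\ref{druga} shows that $a\mapsto\lambda_{a}$ is a group homomorphism $A^{o}\to\mathrm{Aut}(A,+)$. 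Now fix $i\ne j$ and $a\in A_{i}$. On one hand $a$ lies in the subgroup $A_{i}^{o}\le A^{o}$ of order $p_{i}^{\alpha_{i}}$, so $\lambda_{a}$ has order a power of $p_{i}$. On the other hand $\lambda_{a}$, being additive, preserves every primary component, and $\lambda_{a}|_{A_{j}}=\mathrm{id}+L_{a}|_{A_{j}}$ is a \emph{unipotent} automorphism of the finite abelian $p_{j}$-group $A_{j}$ (since $L_{a}|_{A_{j}}$ is nilpotent); a standard binomial-coefficient estimate shows that a unipotent automorphism of a finite abelian $p_{j}$-group has order a power of $p_{j}$. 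As $p_{i}\ne p_{j}$, the order of $\lambda_{a}|_{A_{j}}$ is $1$, i.e.\ $L_{a}|_{A_{j}}=0$, i.e.\ $a\cdot A_{j}=0$; since $a\in A_{i}$ was arbitrary, $A_{i}\cdot A_{j}=0$.

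Finally I would assemble the direct sum by inducting on the number of distinct prime divisors of $|A|$, splitting off one component at a time. Put $A'=\sum_{l\ne j}A_{l}$; the argument of Lemma~\ref{56} shows $A'$ is a sub-brace (it is the part of $(A,+)$ annihilated by $|A'|$, which is prime to $p_{j}$), and the $\lambda$-argument above, run once with $a\in A_{j}$ acting on the $p_{j}$-prime group $A'$ and once with $a\in A'$ acting on the $p_{j}$-group $A_{j}$, yields both $A_{j}\cdot A'=0$ and $A'\cdot A_{j}=0$. From $a\cdot b=0$ and Lemma~\ref{fajny} (taking $s$ with $A^{s}=0$) one extracts the identity $(a+b)c=ac+bc+a(bc)$, since $d_{1}'=ab=0$ annihilates every term of the correction sum except the one indexed by $i=0$; feeding $x=u+u'$, $y=v+v'$ with $u,v\in A_{j}$ and $u',v'\in A'$ into this identity, together with $A_{j}A'=A'A_{j}=0$, gives $x\cdot y=uv+u'v'$, so $A\cong A_{j}\oplus A'$ as braces. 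Since $(A')^{n}=0$ and $A'$ has fewer prime divisors, induction completes the proof (the base case, $|A|$ a prime power, being trivial). The main obstacle, and the point I would treat most carefully, is exactly the cross-product vanishing $A_{i}A_{j}=0$: unlike in the ring case, left multiplication in a left brace is not additive on the left, so the classical Wedderburn-type splitting cannot simply be transplanted, and one genuinely needs the nilpotency of $L_{a}$ coming from $A^{n}=0$ in tandem with the coprimality of orders.
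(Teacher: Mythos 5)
Your proposal is correct, but the decisive step --- the vanishing of the cross products $A_{i}\cdot A_{j}=0$ for $i\neq j$ --- is proved by a genuinely different mechanism than in the paper. The paper, after the same appeal to Lemma~\ref{56}, stays inside brace arithmetic: it proves $(a+ja)c=ac+(ja)c$ by a double induction on $2n-\deg(a)-\deg(c)$, using Lemmas~\ref{fajny} and~\ref{8} to kill the correction term $q_{j}$, and then a B\'ezout computation (writing $e=\xi m+1=\beta m'$, so that $ac=(ea)c=e(ac)=a(ec)=0$) forces $ac=0$; componentwise multiplication is then obtained by induction on the number of summands via $(a\circ b)c=a(bc)+ac+bc$, and nilpotency of $A^{o}$ follows as in your last step. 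You instead exploit the homomorphism $\lambda\colon A^{o}\to\mathrm{Aut}(A,+)$: the hypothesis $A^{n}=0$ makes $L_{a}$ nilpotent, so $\lambda_{a}|_{A_{j}}$ is unipotent and has $p_{j}$-power order by the binomial estimate, while Lagrange in $A^{o}$ (using that $A_{i}$ is a sub-brace, hence a subgroup of $A^{o}$ of order $p_{i}^{\alpha_{i}}$) gives it $p_{i}$-power order, whence $L_{a}|_{A_{j}}=0$. Your route is shorter and more conceptual, and is close in spirit to Ced{\'o}'s Sylow-subgroup proof of Theorem~\ref{aggi3}, with $A^{n}=0$ entering exactly where it must (without it, as in Rump's Example~3, the restriction need not be unipotent and the conclusion fails); the paper's route buys self-containedness, using only the brace identities and lemmas already developed for Theorem~\ref{222} and no facts about orders of automorphisms. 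Two small remarks: in the assembly step you do not need Lemma~\ref{fajny} at all, since $ab=0$ gives $a+b=a\circ b$ and $(a+b)c=ac+bc+a(bc)$ is then just the defining relation $(a\circ b)c=a(bc)+ac+bc$ (this is exactly how the paper argues); and you were right to re-run the $\lambda$-argument for $a\in A'=\sum_{l\neq j}A_{l}$ rather than summing the relations $A_{l}A_{j}=0$, since multiplication in a left brace is not additive in the left variable --- that is the one place where a careless transplant of the ring argument would break down.
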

\begin{proof} Let notation be as in Lemma \ref{56}. 
 We will first show that if $a, b\in A$ and  $m\cdot a=m'\cdot c=0$ for some coprime natural numbers $m,m'$ then $a\cdot c=0$.
 Let $a\in A.$ By $\deg (a)$ we will denote the largest number $i\leq n$ such that $a\in A^{i}$.
We will proceed by induction on $i=2n-\deg (a) -\deg (c)$.   If $2n-\deg (a) -\deg (c)=0$ then $a, c\in A^{n}=0$, so the result holds.
 Suppose now that $i>0$ and that result holds when $2n-\deg (a) -\deg (c)<i$. We will show that the result also holds for $2n-\deg (a) -\deg (c)=i$.

Let $j$ be a natural number. Let $q, d_{1}, d_{1}',  \ldots , d_{n}, d_{n'}$ be as in Lemma  \ref{fajny}, applied for $a$ and for $b=ja$ and for $s=n$. Denote $q_{j}=q$ then by Lemma \ref{fajny} we have  
\[(a+ja)c=ac+(ja)c+q_{j}.\]  
 By Lemma \ref{8}, $m\cdot d_{i}=m\cdot d_{i}'=0$ for all $i\geq 0$. 
 Observe now that for any $i$, the order of element $d_{i}'c$ is a divisor of $m'$ and hence is coprime with $m$. 
 This follows because, by the assumption at the beginning of the proof the order of $c$ is $m'$ and $m'$ is coprime with $m$.
Observe that $m'\cdot (d_{i}'c)=d_{i}'c+\ldots +d_{i}'c=d_{i}'\cdot (m'\cdot c)=0$. 

Observe  $d_{i}(d_{i}'c)=0$ by the inductive assumption, as 
$2n-\deg (d_{i})-deg (d_{i}'c)\leq 2n-\deg (d_{i})-(\deg (c)+1)<2n-\deg (a) -\deg (c)$. Similarly $(d_{i}d_{i}')c=0$ by the inductive assumption since 
 $2n-\deg (d_{i}d_{i}')-deg (c)\leq 2n-(\deg (d_{i}')+1)-\deg (c) < 2n-\deg (a) -\deg (c)$. Therefore $q_{j}=0$.
  Consequently for every natural number $j$, 
\[(a+ja)c=ac+(ja)c.\]

Recall that $m, m'$ are coprime numbers; therefore there are natural numbers $\xi , \beta $ such that 
$\beta m'-\xi  \cdot m=1$. 
 Denote $e=\xi \cdot m+1=\beta \cdot m'$. 
 Observe now that by the above  
 $ac=(ea)c=((e-1)a)c+ ac=((e-2)a)c+ac+ac=\ldots =e(ac)=a(ec)=0$.
 We have proved that $ac=0$. Therefore if $a\in A_{i}$ and $c\in A_{j}$ then $ac=0$, provided that $i\neq j$ ( where $A_{i}$ is as in Lemma \ref{56}).
 
Let $a_{i}\in A^{i}$ for $i=1, \ldots ,k$ and $b\in A$.  By the property of a left brace 
 \[b\cdot (\sum_{i=1}^{k}a_{i})=\sum_{i=1}^{k}ba_{i}.\]

 Let $c_{i}\in A_{i}$. To show that $A$ is the direct sum of braces $A_{i}$ it remains to show that  
 $(\sum _{j=1}^{k}a_{j})c_{i}=a_{i}c_{i}$. 
    We will show  that for every $l\leq k$, $(\sum _{j=1}^{l}a_{j})c_{i}=a_{i}c_{i}$  if $i\leq l$ and $(\sum _{j=1}^{l}a_{j})c_{i}=0$ if $i>l$.  
  We will proceed by induction on $l$. The result  is true for $l=1$. Let $l>1$ and suppose that the result holds $l-1$.

 Observe first that $a_{l}\cdot (\sum_{j=1}^{l-1}a_{j})=\sum_{j=1}^{l-1}a_{l}a_{j}=0$ by the first part of the proof.
Hence $(\sum _{j=1}^{l}a_{j})c_{i}=(a_{l}+(\sum_{j=1}^{l-1}a_{j})+a_{l}(\sum_{j=1}^{l-1}a_{j}))c_{i}=
 a_{l}c_{i}+(\sum_{j=1}^{l-1}a_{j})c_{i}+ a_{l}((\sum_{j=1}^{l-1}a_{j})c_{i})$.
 By the inductive assumption $(\sum_{j=1}^{l-1}a_{j})c_{i}=a_{i}c_{i}$ if $i\leq l-1$ and $(\sum_{j=1}^{l-1}a_{j})c_{i}=0$ otherwise. 
 Suppose that $i>l$ then  $(\sum_{j=1}^{l-1}a_{j})c_{i}=0$ and $a_{l}c_{i}=0$ hence $(\sum_{j=1}^{l}a_{j})c_{i}=0,$ as required.
 If $i=l$ then $(\sum_{j=1}^{l-1}a_{j})c_{i}=0$ so  $(\sum_{j=1}^{l}a_{j})c_{i}=a_{l}c_{i}=a_{i}c_{i}$ as required.
 If $i<l$ then $(\sum_{j=1}^{l-1}a_{j})c_{i}=a_{i}c_{i}$, $a_{l}c_{i}=0$ and  $(a_{l}(\sum_{j=1}^{l-1}a_{j})c_{i}))=a_{l}(a_{i}c_{i})=0$
 as $a_{l}\in A_{l}$ and $a_{i}c_{i}\in A_{i}$ and $l\neq i$. Hence $(\sum_{j=1}^{l}a_{j})c_{i}=a_{l}c_{i}=a_{i}c_{i}$ as required.
 Therefore, $A$ is the direct sum of braces $A_{i}$.

We will now show the nilpotency of $A$. Observe first that for every $i$, $A_{i}$ is a $p$-group and hence is nilpotent.
 Observe then that if $a\in A_{i}$ and $b\in A_{j}$ for $i\neq j$ then $a\circ b=b \circ a$ since $a\circ b=a+b+ab=a+b$ and 
 $b\circ a=b+a+ba=b+a$ by the above. Therefore, $A^{o}$ is the direct product of groups $A_{i}$ for $i=1, \ldots ,k$, and hence it is  a nilpotent group. 
\end{proof}
\section{Braces whose adjoint group is nilpotent}\label{q}

In this section we will investigate the structure of braces whose adjoint groups are nilpotent. 
  For the following result we use a short proof which was provided by Ferran Ced{\' o} after  
 reading the original proof in the first version in this manuscript.  
\begin{theorem}\label{aggi3}
 Let $A$ be a finite left brace such that the adjoint group  $A^{o}$ is a nilpotent group. Then $A$ 
 is a direct sum of braces whose cardinalities are powers of prime numbers.
 Assume that  $A$ has cardinality $p_{1}^{\alpha _{1}}\cdots p_{k}^{\alpha _{k}}$, for some prime pairwise distinct numbers $p_{1}, \ldots , p_{k}$ and some natural numbers $\alpha _{1}, \ldots , \alpha _{k}$. Then $A^{n}=0$ where 
$n$ is the largest number from among $\alpha _{1}+1$, $\alpha _{2}+1, \ldots ,\alpha _{k}+1$.
\end{theorem}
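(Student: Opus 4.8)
The plan is to reduce Theorem~\ref{aggi3} to Theorem~\ref{aggi1} and to the earlier results of this section. First I would argue that a finite left brace $A$ whose adjoint group $A^{o}$ is nilpotent decomposes as a direct sum over the primes dividing $|A|$. A finite nilpotent group is the direct product of its Sylow subgroups, so $A^{o}=P_{1}\times\cdots\times P_{k}$ with $|P_{i}|=p_{i}^{\alpha_{i}}$. One then checks that each $P_{i}$, viewed inside $A$, coincides with the additive Sylow $p_{i}$-subgroup $A_{i}$ of $(A,+)$ from Lemma~\ref{56}: indeed, for $a\in A$ the additive order and the order of $a$ in $A^{o}$ have the same prime divisors (this can be extracted from the observation in the proof of Lemma~\ref{56} that $p\cdot y=0$ implies $p\cdot(xy)=0$, or directly from Lemma~\ref{56} together with the fact that the $A_{i}$ are subgroups of $A^{o}$). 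Hence $A_{i}=P_{i}$ is a subgroup of $A^{o}$ and by Lemma~\ref{56} a subbrace, and the direct-product decomposition of $A^{o}$ forces the cross terms $a_{i}\cdot a_{j}$ ($i\neq j$) to vanish, exactly as in the last paragraphs of the proof of Theorem~\ref{aggi1}; so $A=A_{1}\oplus\cdots\oplus A_{k}$ as braces.

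Next I would prove the nilpotency bound $A^{n}=0$. Since direct sums multiply componentwise, $A^{m}=0$ if and only if $A_{i}^{m}=0$ for every $i$, so it suffices to bound $A_{i}^{m}$ for a single $p$-brace $A_{i}$ of order $p_{i}^{\alpha_{i}}$. For such a brace the adjoint group $A_{i}^{o}$ is a finite $p_{i}$-group, so by Rump's result quoted in the introduction (Corollary after Proposition~8 of \cite{rump}) we have $A_{i}^{m}=0$ for some $m$; now I want the sharp bound $A_{i}^{\alpha_{i}+1}=0$. This follows from the strictness of the chain: whenever $A_{i}^{t}\neq 0$, the quotients $A_{i}^{t}/A_{i}^{t+1}$ are nonzero additive subgroups, so $A_{i}^{1}\supsetneq A_{i}^{2}\supsetneq\cdots$ strictly until it reaches $0$; since $|A_{i}|=p_{i}^{\alpha_{i}}$ has a subgroup chain of length at most $\alpha_{i}$, the chain $A_{i}^{1}\supsetneq\cdots\supsetneq A_{i}^{r}=0$ has $r\le\alpha_{i}+1$, i.e. $A_{i}^{\alpha_{i}+1}=0$. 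The only point that needs care here is the strict-descent claim $A_{i}^{t}\neq 0\Rightarrow A_{i}^{t}\supsetneq A_{i}^{t+1}$, which is where one uses that $A_{i}$ is a nonzero nilpotent $p$-brace: if $A_{i}^{t}=A_{i}^{t+1}=A_{i}\cdot A_{i}^{t}$ with $A_{i}^{t}\neq 0$, one derives a contradiction with nilpotency (this is essentially Nakayama-type reasoning and is implicit in Rump's corollary).

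Finally I would assemble the pieces: taking $n=\max_{i}(\alpha_{i}+1)$, each $A_{i}^{n}\subseteq A_{i}^{\alpha_{i}+1}=0$, hence $A^{n}=(A_{1}\oplus\cdots\oplus A_{k})^{n}=A_{1}^{n}\oplus\cdots\oplus A_{k}^{n}=0$, which is exactly the assertion. I expect the main obstacle to be the first step, namely verifying cleanly that the abstract Sylow decomposition of the nilpotent group $A^{o}$ is compatible with the brace structure — i.e. that the group-theoretic Sylow subgroups are precisely the additive primary components $A_{i}$ and that all mixed products vanish. Once that identification is in place, the bound on $n$ is a short counting argument on top of Rump's nilpotency result, and the componentwise behaviour of $A^{n}$ under direct sums is immediate from the definition of the multiplication on a direct sum of braces given in the introduction.
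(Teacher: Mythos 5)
Your proof is correct and, for the decomposition, follows essentially the same route as the paper's proof (due to Ced\'o): the additive Sylow subgroups are multiplicatively closed, hence are precisely the Sylow subgroups of the nilpotent group $A^{o}$, and the direct sum decomposition follows. One correction, though: the vanishing of the cross terms $a_{i}\cdot a_{j}$ ($i\neq j$) cannot be imported from the last paragraphs of the proof of Theorem~\ref{aggi1}, since that argument is an induction on the filtration $A^{m}$ and so presupposes $A^{n}=0$, which here is the conclusion rather than a hypothesis; instead, observe that $a_{i}$ and $a_{j}$ lie in distinct Sylow subgroups of the nilpotent group $A^{o}$, hence $a_{i}\circ a_{j}=a_{j}\circ a_{i}$, so $a_{i}a_{j}=a_{j}a_{i}$, and since $m\cdot(xy)=x\cdot(m\cdot y)$ forces $a_{i}a_{j}\in A_{j}$ and $a_{j}a_{i}\in A_{i}$, both products lie in $A_{i}\cap A_{j}=0$. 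For the bound $A^{n}=0$ the paper simply cites Rump's Corollary after Proposition~8, whereas you make the strict-descent counting argument explicit (if $A_{i}^{t}=A_{i}^{t+1}$ then iterating gives $A_{i}^{t}=A_{i}^{m}=0$); that argument is sound and is a useful supplement to the paper's terse citation.
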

\begin{proof} (Provided by Ferran Ced{\' o}.) The first part is easier to prove using the equivalent definition of 
left brace. [\cite{cjo}, Definition 1]:  A left brace is a set $B$ with two binary 
operations: a sum $+$ and a multiplication $\circ$, such that $(B,+)$ is 
an abelian group, $(B,\circ )$ is a group and
$a\circ (b+c)+a=a\circ b+a\circ c$ for all $a,b,c\in B$.

Suppose that $B$ is a finite left brace such that its multiplicative 
group is nilpotent. Let $P$ be a Sylow $p$-subgroup of the additive 
group of the left brace $B$. By [\cite{cjo}, Lemma $1$], $\lambda_a(P)=P$ for all 
$a\in B$, where $\lambda_a(b)=a\circ b-a$. In particular $P$ is closed 
by the multiplication and hence it is a subgroup of the multiplicative 
group of the left brace $B$. Thus $P$ is a Sylow $p$-subgroup of the 
multiplicative group of $B$. Since the multiplicative group of $B$ is 
nilpotent, $P$ is a normal subgroup in $(B,\circ)$. Hence $P$ is an 
ideal of the left brace $B$ (see [\cite{cjo}, Definition 3]). Therefore, if 
$P_1,\dots ,P_r$ are the Sylow subgroups of the additive group of  $B$, 
then they are also the Sylow subgroups of the multiplicative group of 
$B$, in fact they are ideals of $B$ and $B=P_1\circ\dots\circ 
P_r=P_1+\dots +P_r$ is the inner direct product of the subbraces 
$P_1,\dots ,P_r$.

The second part of Theorem 19 is a consequence of [\cite{rump}, Corollary after 
Proposition 8].
\end{proof}
{\bf Proof of Theorem \ref{1}} If $A^{n}=0$ for some $n$, then by Theorem \ref{aggi1} the group $A^{o}$ is nilpotent, and $A$ is the direct sum of braces whose cardinalities are prime numbers. 
On the other hand if $A$ is a left brace and $A^{o}$ is nilpotent then $A^{n}=0$ for some $n$, by Theorem \ref{aggi3}.

{\bf Proof of Theorem \ref{222}} This follows from Lemma \ref{fajny} applied several times, taking into account that $A^{[s]}=0$.

{\bf Proof of Theorem \ref{eq23}} 
 Notice that $1$ and $2$ are equivalent by Theorem \ref{one1}. 
 Notice that by Remark \ref{remark}, $3$ and $4$ are equivalent. By Theorems \ref{two2} and \ref{55} properties  $3$ and $1$ are equivalent.
  
{\bf Proof of  Proposition \ref{ags}} This follows from Remark \ref{remark} and Theorems \ref{two2}, \ref{55}.


\section{Jacobson radical}\label{111}

In this chapter we give some preliminary results on Jacobson radical rings.

\begin{lemma}\label{pierwszy} Let $F$ be a field.  Let $n$ be a natural number. 
 Let $R$ be an $F$-algebra generated by elements $a,b$ (without an identity element), and suppose that $a^{2}=0$ and $b^{n}=0$ for some $n$.
 Let $S$ be the $F$-linear space spanned by elements $a\cdot b^{i}$ for $0<i<n.$
 If all finite  matrices  with entries from $S$
 are nilpotent, then $R$ is a Jacobson radical  ring.
\end{lemma}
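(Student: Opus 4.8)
The plan is to show that every element of $R$ is quasi-regular, or equivalently that $R$ coincides with its Jacobson radical; since $R$ is an algebra it suffices to show $R$ is nil, or at least that $R$ is locally nilpotent, or — most efficiently — to exhibit a faithful representation of $R$ in which $R$ acts by a nil ideal. First I would describe the structure of $R$ explicitly. Since $R$ is generated by $a,b$ with $a^2=0$ and $b^n=0$, every monomial in $a,b$ can be brought, using $a^2=0$, into a normal form $b^{i_0}ab^{i_1}a\cdots ab^{i_k}$ with $k\ge 0$ and each exponent reduced modulo the relation $b^n=0$; monomials with $k\ge$ some bound that forces a repeated pattern or with any $b$-exponent $\ge n$ vanish. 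The key observation is that, after factoring out the (nilpotent, since $b^n=0$) contribution of pure powers of $b$, the "interesting" part of $R$ is controlled by the subspace $S$ spanned by the $a b^i$, $0<i<n$: a product of two normal-form monomials each containing at least one $a$ is, up to left/right multiplication by powers of $b$, a product of elements of the form $ab^i a b^j\cdots$, i.e. governed by how elements of $S$ multiply each other on the left by $a$ — equivalently by the matrix of left-multiplication operators.

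Next I would set up the matrix picture that the hypothesis is designed for. Consider the (finite-dimensional, since $0<i<n$) space $V$ with basis indexed by $\{1,\dots,n-1\}$, thought of as a copy of $S$ via $e_i\leftrightarrow ab^i$. Each element $s=\sum_i \lambda_i ab^i\in S$ induces, via the rule $s\cdot(ab^j) = \sum_i\lambda_i ab^i\cdot ab^j$ and reducing $b^{i}ab^{j}$ using the relations back into the span of the $ab^\ell$ modulo lower terms, a linear operator on $V$; this assignment is (the relevant part of) a representation, and the matrices arising are exactly the "finite matrices with entries from $S$" in the statement — or rather, arbitrary products and sums of such are captured by taking block matrices with entries in $S$ acting on direct sums of copies of $V$. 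The hypothesis that all such matrices are nilpotent then says precisely that this representation sends every element built from $S$ to a nilpotent operator. Combining this with $a^2=0$ and $b^n=0$, I would argue that any sufficiently long product of elements of $R$ either vanishes outright (because two $a$'s collide after reduction, or a $b^n$ appears) or is expressible through a long product of the associated nilpotent matrices, hence vanishes. That gives local nilpotence: every finitely generated subring of $R$ is nilpotent.

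Concretely the argument would run by induction on word length, peeling off the leftmost and rightmost pure-$b$ blocks, and reducing to a product $x_1 x_2\cdots x_m$ with each $x_t$ of the form $ab^{i_t}$ with $0<i_t<n$ (terms with $i_t=0$ being killed by $a^2=0$, terms with $i_t\ge n$ killed by $b^n=0$), and then identifying this product with an entry of the $m$-fold product of the matrices attached to the $x_t$; nilpotency of all such matrices forces the product to be $0$ once $m$ is large. Since $R$ is then locally nilpotent, it is a nil ring, and a nil algebra is Jacobson radical, so $R=J(R)$ and $R$ is a Jacobson radical ring.

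The main obstacle I anticipate is the bookkeeping of the normal-form reduction: making precise that a product of normal-form monomials each containing $\ge 2$ occurrences of $a$ is faithfully encoded by a product of $S$-matrices, including correctly handling the powers of $b$ sandwiched between consecutive $a$'s (these powers are what make the "entries" genuinely range over all of $S$ and not just the basis vectors), and checking that no cancellation or reduction outside this matrix model can occur. Once that encoding is set up cleanly, the passage from "all $S$-matrices nilpotent" to "$R$ locally nilpotent" to "$R$ Jacobson radical" is routine.
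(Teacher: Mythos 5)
There is a genuine gap, and it sits exactly at the point your proposal treats as routine. The decisive step of this lemma is the passage from ``every finite matrix with entries from $S$ is nilpotent'' to ``the subring $R'$ generated by $S$ is Jacobson radical''; in the paper this is imported wholesale from Theorem $1.2$ of \cite{smok}, a nontrivial external theorem. You replace it by the much stronger claim that long products of elements of $S$ vanish, so that $R$ is locally nilpotent, and this fails for several reasons. First, your finite-dimensional ``matrix picture'' does not exist: the hypothesis concerns matrices whose entries are elements of $S$ inside the (generally infinite-dimensional) ring $R$, not scalar matrices acting on a copy of $S$. Since the only relations imposed are $a^{2}=0$ and $b^{n}=0$, a product $ab^{i}\cdot ab^{j}=ab^{i}ab^{j}$ is a new monomial and does not reduce back into the span of the $ab^{\ell}$, so $S$ admits no left-multiplication action on a space with basis indexed by $\{1,\dots ,n-1\}$. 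Second, even granting some matrix encoding, the hypothesis gives nilpotency of each individual matrix with no uniform index, and a product $M_{1}M_{2}\cdots M_{m}$ of distinct nilpotent matrices need not vanish; so ``nilpotency of all such matrices forces the product to be $0$ once $m$ is large'' does not follow. Third, your intended conclusion contradicts the very purpose of the lemma: the quotient $R/I$ constructed in Theorem \ref{id} satisfies the hypotheses (all the enumerated matrices become nilpotent modulo $I$), is finitely generated and nil, yet its adjoint group is not even an Engel group; if it were locally nilpotent it would be nilpotent and its adjoint group nilpotent. So no argument can derive local nilpotence, or even nility with bounded index, from the stated hypothesis.

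For comparison, the paper's proof never claims anything stronger than quasi-regularity. It takes $R'$, the subring generated by $S$, and concludes from Theorem $1.2$ of \cite{smok} that $R'$ is Jacobson radical; then it forms $S'$, the ring generated by $S$, $Sa$ and $a$, notes that $SaS=0$ because $a^{2}=0$, so $Sa$ is a nilpotent ideal of $S'$ with $S'/Sa$ radical, hence $S'$ is radical; it checks $S'R\subseteq S'+S'a=S'$, so $S'$ is a right ideal of $R$, and uses the classical fact that a one-sided ideal of quasi-regular elements generates a two-sided Jacobson radical ideal $I$; finally $R/I$ is nilpotent because it is generated by the image of $b$ with $b^{n}=0$, whence $R$ is Jacobson radical. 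If you want to avoid citing \cite{smok}, you would have to prove that theorem (or an equivalent) yourself, which is substantially harder than the normal-form bookkeeping you anticipate as the main obstacle.
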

\begin{proof}
 We will use the well-known fact that  a one-sided ideal in which every element is quasi-regular generates a two-sided ideal which is Jacobson radical \cite{lam}.
Let $R'$ be a subring of $R$ generated by elements from $S$. 
Since all matrices with entries from $S$ are nilpotent, then by Theorem $1.2$ from  \cite{smok}
  $R'$ is a Jacobson radical ring. 
Consider ring $S'$  generated by elements from $S$ and from $Sa$ and by element $a$. Recall that  $a^{2}=0$, and so $SaS=0$. Therefore $Sa$ is a two sided-ideal in $S'$ which is nilpotent; also  $S'/Sa$ is Jacobson radical, since $R'$ is Jacobson radical. It follows that  $S'$ is Jacobson radical.

Observe that $S'R\subseteq S'+S'a=S'$, hence $S'$ is a right ideal in $R$.   Therefore the two sided ideal generated by $S'$ in $R$  is Jacobson radical; we will call this ideal $I$. 
Observe now that the ring $R/I$ is nilpotent, as it is generated by powers of $b$. It follows that $R$ is Jacobson radical.
\end{proof}

\begin{lemma}\label{trzeci} Let $F$ be a field.  Let $n$ be a natural number. 
 Let $R$ be an $F$-algebra generated by elements $a,b$ (without an identity element), and suppose that $a^{2}=0$ and $b^{n}=0$ for some $n$.
Let $R[x]$ be the polynomial ring in one variable $x$ over $R$. 
Let $Q$ be the $F$-linear space spanned by elements $a\cdot b^{i}x^{j}$ for $0<i<n,$ $0\leq j$.
If all finite  matrices  with entries from $Q$
 are nilpotent, then $R[x]$ is a Jacobson radical  ring, and hence $R$ is a nil ring.
\end{lemma}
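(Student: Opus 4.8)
The plan is to mimic the structure of Lemma~\ref{pierwszy}'s proof, adapting each step to accommodate the extra polynomial variable $x$. The key observation is that the element $a$ still satisfies $a^2=0$, so the two-sided ideal structure around $a$ behaves exactly as before, while the powers of $b$ no longer generate a nilpotent quotient but only a nil one (since $b$ commutes with $x$ and $b^n=0$, the subring generated by $b$ and $x$ inside $R[x]$ is nil, being a homomorphic image of a polynomial ring over a nilpotent ring).

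First I would set $Q'$ to be the subring of $R[x]$ generated by the elements of $Q$ (i.e.\ by $a b^i x^j$ with $0<i<n$, $0\le j$). Since by hypothesis all finite matrices with entries from $Q$ are nilpotent, Theorem~$1.2$ of \cite{smok} gives that $Q'$ is a Jacobson radical ring. Next, imitating the middle paragraph of Lemma~\ref{pierwszy}, I would form the subring $S'$ generated by $Q$, by $Qa$, and by $a$ itself; because $a^2=0$ we have $QaQ=0$, so $Qa$ is a nilpotent two-sided ideal of $S'$ with $S'/Qa$ Jacobson radical, hence $S'$ is Jacobson radical. Then $S' R[x]\subseteq S' + S'a = S'$ shows $S'$ is a right ideal of $R[x]$, so the two-sided ideal $I$ it generates in $R[x]$ is Jacobson radical (again using the fact that a one-sided ideal of quasi-regular elements generates a Jacobson-radical two-sided ideal, \cite{lam}).

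Finally I would analyze the quotient $R[x]/I$. Since $I$ contains all the elements $a b^i x^j$ with $0<i<n$, and since $R$ is generated by $a$ and $b$ with $a^2=0$ and $b^n=0$, the ring $R[x]/I$ is generated by the images of $b$ and $x$ together with the image of $a$; using $a^2=0$ and the relations now living in $I$, every element of $R[x]/I$ can be written as a polynomial expression in $\bar b,\bar x$ plus terms involving $\bar a$, and one checks that $R[x]/I$ is a homomorphic image of a polynomial ring (in $x$ and the auxiliary variable for $a$) over the nilpotent ring $R/(R a R + \text{powers of }b)$-type quotient; in any case $R[x]/I$ is nil because $\bar a^2=0$, $\bar b^n=0$, and $\bar x$ is central, so every monomial in these generators of large enough degree vanishes or lies in a nil part. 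Hence $R[x]/I$ is nil, and since $I$ is Jacobson radical it follows that $R[x]$ is a nil ring; a fortiori $R[x]$ is Jacobson radical, and since $R\subseteq R[x]$ is a subring of a nil ring, $R$ is nil.

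The main obstacle I anticipate is the last step: verifying precisely that $R[x]/I$ is nil (rather than merely Jacobson radical) requires being careful about what $I$ actually contains. The subtlety is that $I$ is generated as a two-sided ideal by $S'$, so one must check that the images of $a b^i$ for $0<i<n$ — and more generally all "mixed" monomials containing both $a$ and some positive power of $b$ — die in the quotient, leaving only monomials that are pure powers of $b$ (which are nilpotent) or that end/start with $a$ in a controlled way (handled by $a^2=0$, so $\bar a$ generates a square-zero, hence nil, piece). Once one confirms that the surviving generators of $R[x]/I$ generate a nil ring — using that $\bar x$ is central and commutes with the nilpotent elements $\bar a,\bar b$ — the conclusion follows. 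The rest of the argument is a routine transcription of Lemma~\ref{pierwszy}.
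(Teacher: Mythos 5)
Your middle steps track the paper's argument (Theorem~$1.2$ of \cite{smok} for the subring generated by $Q$, then the square-zero trick around $a$, then the one-sided-ideal fact from \cite{lam}), but the final deduction has a genuine logical gap. From ``$I$ is Jacobson radical and $R[x]/I$ is nil'' you conclude that $R[x]$ is \emph{nil}; that inference is false. An extension of a Jacobson radical ideal by a nil (hence Jacobson radical) quotient is only Jacobson radical, not nil, and nothing in the construction makes $I$ nil: Theorem~$1.2$ of \cite{smok} gives radicality of the subring generated by $Q$, not nilness. Indeed, whether a polynomial ring over a nil ring is even Jacobson radical is a K\"othe-type open problem, so ``$R[x]$ is nil'' is far stronger than anything available, and it is not what the lemma asserts. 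Consequently your derivation of ``$R$ is nil'' (as a subring of a nil ring) collapses. The paper instead gets exactly this point from Amitsur's theorem (Theorem 15A.5 of \cite{rowen}): if $R[x]$ is Jacobson radical then $R$ is nil. That citation is the whole content of the clause ``and hence $R$ is a nil ring,'' and it is missing from your proposal; everything else you do only yields that $R[x]$ is Jacobson radical, which is fine, but you must then quote Amitsur rather than claim nilness of $R[x]$.

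Two smaller points. First, the analysis of $R[x]/I$ is simpler than you make it: since $a\in S'\subseteq I$, the image of $R$ in $R[x]/I$ is spanned by $b,\dots,b^{n-1}$ and so is nilpotent; hence $R[x]/I$ is a homomorphic image of $N[x]$ with $N^{n}=0$ and is itself nilpotent --- no discussion of ``mixed monomials'' is needed, because every monomial containing $a$ already dies. Second, your explicit containment $S'R[x]\subseteq S'+S'a=S'$ is not literally correct: $a\cdot x=ax$ lies in none of $Q'$, $Q'a$, $F[a]$, since every element of $Q'$ involves a letter $b$. One should enlarge $S'$ by the span of the elements $ax^{j}$ (these, together with $Q'a+Fa$, still form a square-zero ideal, so radicality of the enlarged subring is unaffected, and the enlarged subring is genuinely a right ideal of $R[x]$ because $x$ is central). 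The paper glosses this verification with ``similarly as in Lemma~\ref{pierwszy},'' but since you wrote the containment out, it is worth repairing.
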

\begin{proof}
 Amitsur's theorem assures that if $R$ is a ring such that $R[x]$ is Jacobson radical then $R$ is a nil ring ( Theorem 15A.5, \cite{rowen}).
  Therefore  it suffices to show that $R[x]$ is Jacobson radical.
 Observe that by Theorem $1.2$ from \cite{smok},  if $R'$ is a subring of $R[x]$ generated by elements from $Q$ then 
 $R'$ is Jacobson radical. Let $S'=R'+R'a+F[a]$ (where $F[a]$ denotes the subalgebra of $R$ generated by $a$); then similarly as in Lemma \ref{pierwszy} we get that $S'$ is Jacobson radical.
 It then follows that the two-sided ideal $I$ generated by $S'$ in $R$ is Jacobson radical, and moreover $R[x]/I$ is nil. Therefore 
$R[x]$ is Jacobson radical.
\end{proof}

By $R^{1}$ we denote the usual extension of a ring $R$ 
 by the identity element.

\begin{lemma}\label{two} Let $F$ be a field, and $R=F[a, b]$ be the free algebra (without an identity element) generated by elements $a, b.$  Given $c\in R$, by $F[c]$ we will denote the subalgebra of $R$ generated by $c.$  Let $S$ be the linear $F$-subspace of $F[a,b]$ spanned by elements $ab$ and $a\cdot b^{2}$.
Let  $I$ be the ideal of $F[a,b]$
 generated by $a^{2}, b^{3}$ and by elements from sets $F_{1}, F_{2}, \ldots $ such that $F_{i}\subseteq S^{i}$ for every $i$.  
\begin{itemize}
\item[1.] If $p+q+t+t'+t''\in I$ and $p\in ab R^{1} , q\in bR^{1}aR^{1}$, $t\in F[a]$, $t'\in F[b],$ $t''\in a^{2}R^{1}bR^{1}$ then $p,q,t,t', t''\in I$.
\item[2.]  If $p+q\in I$ and $p\in R^{1} ba, q\in R b$ then $p,q\in I$.
\item[3.] If $p=e_{1}+e_{2}+\ldots +e_{n}$ with $e_{i}\in F\cdot S^{i}$ and $p\in I$ then $e_{i}\in I$ for all $i\leq n$.
\end{itemize}
\end{lemma}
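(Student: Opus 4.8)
The plan is to work in the free algebra $R=F[a,b]$, where every element has a canonical expansion as an $F$-linear combination of words in $a,b$, and to track carefully which words can occur in an element of the ideal $I$. The key structural observation is that $I$ is generated by $a^2$, $b^3$, and homogeneous-in-$S$ elements from $F_i\subseteq S^i$, where $S$ is spanned by $ab$ and $ab^2$; so a general element of $I$ is a sum $\sum u_k g_k v_k$ with $g_k$ one of the generators and $u_k,v_k\in R^1$. First I would set up a normal form for words in $R$ modulo the "obvious" relations: since $a^2$ and $b^3$ lie in $I$, modulo $I$ every word reduces to one avoiding $a^2$ and $b^3$ as a subword, i.e. an alternating word in blocks $a$, $b$, $b^2$. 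I would then partition these reduced words into the five syntactic classes named in part~1 — words beginning with $ab$ (class $abR^1$), words containing $ba$ but not of the first type (class $bR^1aR^1$), powers of $a$ (here just $a$, since $a^2\in I$, so $F[a]$ contributes only $Fa$), powers of $b$ (i.e. $b,b^2$, since $b^3\in I$), and words beginning with $a^2$-tail content $a^2R^1bR^1$ (which, after reduction, is essentially forced into $I$). The point of part~1 is that these classes are "closed" under the reductions coming from the generators $a^2,b^3$ and from $S^i$-elements in the following sense: multiplying a generator on left and right by monomials and reducing never mixes the classes, so the class-by-class components of any element of $I$ are themselves in $I$.

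For part~1, the main step is to check that each generator, when flanked by arbitrary monomials $u,v$ and reduced mod $a^2,b^3$, lands in a single one of the five classes (or in $I$ trivially). For $a^2$: $u a^2 v$ reduced — if $u$ is empty we get $a^2v\in a^2R^1bR^1$ after using $b^3$; if $u$ ends in $a$ it reduces via $a^2$; if $u$ ends in $b$ or $b^2$ then $ua^2v$ has the form (word ending in $b$)$\cdot a \cdot$(word), landing in $bR^1aR^1$ unless it also begins with $ab$. For $b^3$: $ub^3v$ reduces via $b^3$ to zero unless $u$ ends in $a$ or is empty, and then it is a power of $b$ or begins appropriately. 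For an element $f\in F_i\subseteq S^i$: every word of $f$ is a product of $i$ copies of $ab$ or $ab^2$, hence begins with $a$ and in fact with $ab$, so $fv$ lies in $abR^1$; and $uf$ — if $u$ is empty we are in $abR^1$, if $u$ ends in $b$ or $b^2$ we get a word with $b$ then $a$, landing in $bR^1aR^1$, if $u$ ends in $a$ we may create $a^2$ and reduce, landing in $a^2R^1bR^1$ or collapsing. Summing over all generator-instances, the $abR^1$-part of an element of $I$ is a sum of the $abR^1$-parts, which are each in $I$, hence $p\in I$; and likewise $q,t,t',t''\in I$. Because the free algebra is graded by total word-content, I can also run this argument degree by degree so that no convergence issues arise.

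For part~2, the classes $R^1ba$ and $Rb$ (words ending in $ba$ versus words ending in $b$) are again closed under the reductions: a generator $f\in F_i$ or its flankings, when the total word ends in $a$, must end in $ba$ (since $a^2$ reduces), and when it ends in $b$ stays ending in $b$; similarly $a^2$ and $b^3$ flanked by monomials preserve the ending-letter type after reduction (noting $b^3$-reduction of a tail $\dots b^3$ doesn't change whether the word ends in $b$). So projecting an element of $I$ onto words-ending-in-$ba$ versus words-ending-in-$b$ again keeps both projections in $I$, giving $p,q\in I$.

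For part~3, I would use the grading of $R$ by "$S$-degree": assign to each word the count of how it factors through $ab$ and $ab^2$ — more precisely, filter by the number of letters, or better, use that $F\cdot S^i$ consists of scalar multiples of single words of a fixed shape, and the generators $a^2,b^3$ are homogeneous of word-length $2$ and $3$ while $F_i\subseteq S^i$ is spread over word-lengths between $2i$ and $3i$. The cleanest route: observe $S^i$-words are exactly the words that are concatenations of $i$ blocks from $\{ab,ab^2\}$, and such a word of word-length $\ell$ satisfies $\ell=i+(\#b\text{'s})$ with $i\le \#b\text{'s}\le 2i$, so $2i\le\ell\le 3i$ pins down $i$ from $\ell$ only up to a range; to separate the $e_i$ I instead use that a word lies in $S^i$ for a unique $i$ (namely its number of $a$'s), since each block contributes exactly one $a$. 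Thus the $F\cdot S^i$-components of $p$ are obtained by projecting onto words with exactly $i$ occurrences of the letter $a$, and I must show this projection preserves $I$. The generators: $a^2$ has two $a$'s, $b^3$ has zero, and $f\in F_i$ has exactly $i$ $a$'s; flanking by a monomial $u\cdots v$ adds $\#_a(u)+\#_a(v)$ to the $a$-count, and reduction mod $a^2,b^3$ — crucially — does not change the number of $a$'s when it removes a $b^3$, and removes exactly two $a$'s when it kills an $a^2$, i.e. changes $\#_a$ by an even amount or leaves it fixed. This parity subtlety is the one place care is needed: since $a^2\in I$ itself, any word containing $a^2$ is already in $I$ and can be discarded before projecting, after which reduction only uses $b^3$ and preserves $\#_a$ exactly. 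Hence on the complement (words avoiding $a^2$), the $\#_a=i$ projection commutes with the ideal membership, so each $e_i\in I$.

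The main obstacle, as flagged, is part~3's bookkeeping: one must argue that the only way $I$ fails to be "graded" by $a$-count is through the generator $a^2$ itself, and handle that by first absorbing all $a^2$-containing words into $I$. Once that reduction is made explicit, parts 1 and 2 are routine word-combinatorics of the kind sketched above, and the whole lemma follows from the unique normal form in the free algebra.
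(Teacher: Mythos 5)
Your proposal is correct, but it takes a genuinely different route from the paper's. You work throughout with the word basis of the free algebra and use one uniform mechanism: $I$ is the $F$-span of elements $u g v$ with $u,v$ monomials (possibly empty) and $g\in\{a^{2},b^{3}\}\cup\bigcup_{i}F_{i}$, and every such element has all of its monomials inside a single class of each of the three relevant partitions of words (by leading-letter type for part 1, by trailing-letter type for part 2, and by the number of occurrences of $a$ for part 3, which, as you note, equals the $S$-degree because each factor $ab$ or $ab^{2}$ contributes exactly one $a$). Hence the projections of $R$ onto the spans of these word classes map $I$ into $I$, which is precisely what the three assertions claim. The paper instead proves parts 1 and 2 by the specializations $b=0$ and $a=0$ combined with the inclusions $I\subseteq ZR^{1}+b^{3}R^{1}+a^{2}R^{1}+bI+aI$ and $I\subseteq Ia+Ib+R^{1}Z+R^{1}a^{2}+R^{1}b^{3}$ (decomposition by first, respectively last, letter), and proves part 3 by passing to the subalgebra $E$ generated by $ab$ and $ab^{2}$ and an auxiliary ideal $I'$ homogeneous for the $S$-gradation of $E$; your observation that declaring $\deg a=1$, $\deg b=0$ makes all generators of $I$ homogeneous, so that $I$ is literally a graded ideal, yields part 3 more directly than the paper's detour through $E/I'$. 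One caveat: the ``normal form modulo $a^{2},b^{3}$'' framing with which you open is unnecessary and slightly misleading, since membership in $I$ is not the same as reducibility to $0$ (the generators from $F_{i}$ need not be monomials, so there is no rewriting system here); but your argument only ever uses it to discard monomials containing $a^{2}$ or $b^{3}$, which lie in $I$ trivially, so nothing essential is affected --- the class-purity of the flanked generators $ugv$ is what carries the proof, and your case analysis does establish it.
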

\begin{proof}  

 {\bf 1.}  By specialising at $b=0$ we get that $t\in F[a^{2}]\subseteq I$, and by specialising at $a=0$ we get $t'\subseteq F[b^{3}]\in I$; notice that $t''\in I$, hence $p+q\in I$. 
  Denote $Z=\bigcup _{i=1}^{\infty } F_{i}$. Notice that 
 $I\subseteq Z R^{1}+b^{3} R^{1}+a^{2} R^{1}+bI+aI$. It follows that 
 $p+q\in   (ZR^{1}+aI+a^{2}R^{1})+(bI+b^{3}R^{1})$.  Observe that $Z\subseteq aR$.
 Therefore, $p\in (ZR^{1}+aI+a^{2}R^{1})\subseteq I$ and $q\in (bI+b^{3}R^{1})\subseteq I$, as required.

{\bf 2.} Observe now that $I\subseteq Ia+Ib+R^{1}\cdot Z+R^{1} a^{2}+R^{1} b^{3}$, hence $p+q\in (Ia \cap R^{1}ba)+Rb+R^{1} a^{2}$. Notice that $p\in R^{1}ba$ and  $q\in Rb$; 
it follows that $p\in Ia\cap R^{1}ba$, and hence $p\in I$ and so $q=(p+q)-p\in I$.

{\bf 3.} Let $J$ be the ideal of $R$ generated by elements from sets $F_{i}$, and let $<a^{2}>,$ $ <b^{3}>$ denote ideals generated by $a^{2}$ and $b^{3}$ respectively; then 
  $p-j\in <a^{2}>+<b^{3}>$ for some $j\in J$.  Notice that $p$ has no terms from $<a^{2}>+<b^{3}>$.
 Consequently  $p\in I'+bI'+b^{2}I'+I'a+bI'a+b^{2}I'a$, where $I'$ is the ideal of $E$ generated by elements from sets $F_{i}$ and $F_{i}'$, where $E$ is the $F$-algebra generated by elements from $S$, and where  $F_{i}'= (F_{i}b+F_{i}b^{2}+Rb^{3})\cap E$.
  Hence $p-i\in bI' +b^{2}I'+I'a+bI'a+b^{2}I'a$ for some $i\in I'$. Since the left hand side belongs to $aR^{1}b$ and the right 
 hand side to $Ra+bR$ it follows that $p-i=0,$ so $p\in I'\subseteq E$. Therefore in the factor ring $E/I'$ we have that $p+I'$ is the zero element.

Observe that $I'$ is an homogeneous ideal in $E$ when we assign gradation of elements from $S$ to have gradation $1$, from $S^{2}$ gradation $2$ etc. Now $p+I'=0$ in $E/I'$, so 
$\sum_{j=1}^{n}(e_{j}+I')=0$ in $E/I'$, and since $E/I'$ is a graded ring and each $e_{j}+I'$ has gradation $j$
it follows that $e_{i}+I'=0$, hence $e_{i}\in I$, for every $I$. 
\end{proof}

\section{ Ideals generated by powers of matrices are `small`}

Let $R$ be a ring and $R[x]$ be the polynomial ring over $R$. 
Given a matrix $M$ with entries from $R[x]$, 
 let $P(M)$ denote the linear space spanned by coefficients of polynomials which are entries of matrix $M$.

 We will say that a ring $R$ and a linear space $S$  satisfy {\bf Assumption $1$} when 
\begin{itemize}
\item[1.] $R$ is the free algebra (without identity) generated by two elements $a, b$ over a field $F$.
\item[2.]   $S$ is the linear $F$-subspace of $F[a,b]$ spanned by elements $ab$ and $a\cdot b^{2}$.
\end{itemize}

\begin{lemma}\label{podslowa}
 Let $R$, $S$ satisfy assumption $1$, and let $R[x]$ be the polynomial ring over $R$ in one variable $x$.  Let $m$ be a natural number and let $M$ be a matrix with entries from $S^{m}\cdot F[x]$. Let $C=\{c_{1}, c_{2}, \ldots , c_{j}\}$, where $c_{1}, \ldots , c_{j}$ are nonzero elements from $F\cdot S^{m}$.  Let $r=r_{1}r_{2}r_{3}$ where $r_{i}$ is a product of $n_{i}$ elements from set $C$, for $i=1,2, 3$, with $n_{i}\geq 0$.
 
If $r\in P(M^{n_{1}+n_{2}+n_{3}})$ then $r_{i}\in  P(M^{n_{i}})$ for $i=1,2,3$.

\end{lemma}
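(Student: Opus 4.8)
The plan is to exploit the freeness of $R=F[a,b]$ together with the very rigid combinatorial structure of the elements of $S^m$, where $S$ is spanned by $ab$ and $ab^2$. The key observation is that every element of $F\cdot S^m$ is a linear combination of words of the form $w = a b^{\e_1} a b^{\e_2}\cdots a b^{\e_m}$ with each $\e_i\in\{1,2\}$ — so $w$ has exactly $m$ occurrences of $a$, it begins with $a$, ends with $b$, and contains no factor $a^2$ and no factor $b^3$. Thus an element $c_{i_1}c_{i_2}\cdots c_{i_k}$ with each $c_{i_\ell}\in F\cdot S^m$ is a linear combination of such words with $km$ occurrences of $a$; and, crucially, the "block boundaries" between consecutive factors $c_{i_\ell}, c_{i_{\ell+1}}$ occur precisely after every $m$-th letter $a$. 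So the factorisation of a word of $C$-length $k$ into its $k$ constituent blocks from $C$ is \emph{unique}: just cut after every $m$-th occurrence of $a$. I would record this as a preliminary combinatorial lemma.

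Next I would analyse $P(M^N)$, where $M$ has entries in $S^m\cdot F[x]$ and $N=n_1+n_2+n_3$. Each entry of $M^N$ is a sum of products $m_{j_0 j_1} m_{j_1 j_2}\cdots m_{j_{N-1} j_N}$ of $N$ entries of $M$, and each such product, after collecting the powers of $x$ (which commutes with everything and carries no information about the $a,b$-word), is an $F[x]$-multiple of a product of $N$ elements of $S^m$. Hence $P(M^N)$ is spanned by products $d_1 d_2\cdots d_N$ with each $d_\ell\in F\cdot S^m$; equivalently, by words in $a,b$ of $C$-length exactly $N$ in the above sense. Now suppose $r = r_1 r_2 r_3 \in P(M^N)$, where $r_i$ is a product of $n_i$ elements of $C\subseteq F\cdot S^m$. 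Write $r$ and the spanning products of $P(M^N)$ in terms of the word basis of the free algebra. Matching coefficients of each basis word $w$ appearing in $r$: $w$ has $C$-length exactly $N=n_1+n_2+n_3$, and by the uniqueness of block-cutting its unique decomposition into $N$ blocks of "length $m$ in the $a$-count" agrees on the first $n_1$ blocks with a decomposition of (a word of) $r_1$, on the next $n_2$ blocks with one of $r_2$, and on the last $n_3$ with one of $r_3$.

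The heart of the argument, and the step I expect to be the main obstacle, is turning that word-by-word matching into the module statement $r_i\in P(M^{n_i})$. The point is that $P(M^N)$ is spanned by products $d_1\cdots d_N$ with $d_\ell\in P(M)\cap F\cdot S^m$ — more precisely one can arrange, grouping the $N=n_1+n_2+n_3$ factors into three consecutive runs, that $P(M^N)\subseteq P(M^{n_1})\cdot P(M^{n_2})\cdot P(M^{n_3})$ (as linear spaces, using that products of $n_i$ entries of $M$ with intermediate powers of $x$ absorbed lie in $P(M^{n_i})$). Then $r=r_1r_2r_3$ with $r\in P(M^{n_1})P(M^{n_2})P(M^{n_3})$, each factor a product of the prescribed number of elements of $C$; one writes $r$ as a linear combination $\sum_\alpha u_\alpha v_\alpha w_\alpha$ with $u_\alpha\in P(M^{n_1})$ a product of $n_1$ blocks, etc., and invokes the block-boundary uniqueness to conclude that, after grouping the words of $r_1$ by their leading $n_1$-block-prefix, the coefficient of each such prefix in $u_\alpha$ is forced, giving $r_1\in P(M^{n_1})$, and symmetrically for $r_2, r_3$. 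Concretely: fix a word $w'$ of $C$-length $n_1$; it occurs as the length-$n_1$ prefix of some word of $r$ iff (by uniqueness of cutting) $w'$ occurs in $r_1$ and some length-$n_2$ word of $r_2$ concatenated with a length-$n_3$ word of $r_3$ completes it; comparing with the right-hand side shows the coefficient relations pin $r_1$ down inside $P(M^{n_1})$. Handling the degenerate cases $n_i=0$ (where the corresponding $r_i$ is the empty product, i.e.\ the identity of $R^1$, and $P(M^0)$ should be read as $F\cdot 1$) is routine but must be stated explicitly.
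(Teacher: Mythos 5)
Your proposal is correct and takes essentially the same route as the paper: both rest on the inclusion $P(M^{n_1+n_2+n_3})\subseteq P(M^{n_1})\cdot P(M^{n_2})\cdot P(M^{n_3})$ together with the freeness of $R$ (the unique cutting of each word of $F\cdot S^{m(n_1+n_2+n_3)}$ into consecutive $S^m$-blocks), finished by a linear-functional/coefficient-extraction argument. The paper states this last step dually, choosing a linear map $f$ with $f(P(M^{n_j}))=0$ and $f(r_j)\neq 0$ and applying it at the appropriate block of the inclusion to reach a contradiction, which is exactly the content of your coefficient comparison.
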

\begin{proof}
 We can write $M^{n}=M^{n_{1}}\cdot M^{n_{2}}\cdot M^{n_{3}}$. Therefore $P(M^{n})\subseteq P(M^{n_{1}})\cdot P(M^{n_{2}})\cdot P(M^{n_{3}})$.
 Hence \[r=r_{1}\cdot r_{2}\cdot r_{3}\in P(M^{n_{1}})P(M^{n_{2}})P(M^{n_{3}})\]
 and $r_{i}\in F\cdot S^{m\cdot n_{i}},  P(M^{n_{i}})\subseteq F\cdot S^{m\cdot n_{i}}$; it follows that $r_{i}\in P(M^{n_{i}})$ for $i=1,2,3$.

Indeed, if $r_{j}\notin P(M^{n_{j}})$ for some $j$, then we would find a linear mapping $f: F\cdot S^{m \cdot n_{j}}\rightarrow 
 F\cdot S^{m\cdot n_{j}}$ such that $f(P(M^{n_j}))=0$ and $f(r_{j})\neq 0$, and we can apply this mapping to the above inclusion at appropriate places, obtaining a contradiction.
\end{proof}

\begin{definition}\label{wazna1} Let $F$ be an infinite field. Let $R, S$ satisfy Assumption $1$. 
Let $f:F\cdot S^{m}\rightarrow F$ be a $F$-linear mapping.
For every $i$  we can extend the mapping $f$ to the mapping $f:F\cdot S^{m\cdot i}\rightarrow F$
 by defining $f(w_{1}\cdots w_{i})=f(w_{1})\cdots f(w_{i})$  for $w_{1}, \ldots , w_{i}\in  S^{m},$ and then extending it by linearity to all elements from $F\cdot S^{m\cdot i}$.

Let $t(x)=\sum_{i=0}^{n}t_{i}x^{i}$ for some $t_{i}\in R$, then we denote $f(t(x))=\sum_{i=0}^{n}f(t_{i})x^{i}.$ Let $M$ be a matrix with entries $m_{i,j}$; by $f(M)$ we will denote the matrix 
with corresponding  entries equal to $f(m_{i,j}).$

Similarly, if $g: F\cdot S^{m}\rightarrow F\cdot S^{m} $ is a linear mapping then 
for every $i$  we can extend the mapping $g$ to the mapping $g: F\cdot S^{m\cdot i}\rightarrow F\cdot S^{m\cdot i}$
 by defining $g(w_{1}\cdots w_{i})=g(w_{1})\cdots g(w_{i})$  for $w_{1}, \ldots , w_{i}\in  S^{m}$ and then extending it by linearity to all elements from $F\cdot S^{m\cdot i}$.

\end{definition}

\begin{lemma}\label{mily} Let notation be as in Lemma \ref{podslowa}. Assume that $f(c_{i})\neq 0$ for all $i\leq j$, where $c_{i}\in S^{m}$ are as in Lemma \ref{podslowa}.  Let $f: F\cdot S^{m}\rightarrow F$ be a linear mapping, and $f:F\cdot S^{m\cdot n_{2}}\rightarrow F$ be as in Definition \ref{wazna1}.
  Let $F$ be an infinite field, and let $n=n_{1}+n_{2}+n_{3}$ be natural numbers. 
If $r=r_{1}r_{2}r_{3}\in P(M^{n})$ then  
\[r_{1}f(r_{2})r_{3}\in P(M^{n_{1}}f(M^{n_{2}})M^{n_{3}}).\]
\end{lemma}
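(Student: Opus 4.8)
The plan is to exploit the multiplicativity of $f$ over products in $S^m$ together with the decomposition $M^n = M^{n_1}\cdot M^{n_2}\cdot M^{n_3}$ already used in Lemma~\ref{podslowa}. First I would write $r = r_1 r_2 r_3$ with $r_i$ a product of exactly $n_i$ elements of $C$, so that $r_i \in F\cdot S^{m n_i}$. From $r \in P(M^n)$ and the containment $P(M^n) \subseteq P(M^{n_1})P(M^{n_2})P(M^{n_3})$, I would express $r$ as a (finite) $F$-linear combination of products $u_1 u_2 u_3$ with $u_i \in P(M^{n_i})$. By Lemma~\ref{podslowa} (or rather the linear-functional separation argument given in its proof), the $S^{m n_i}$-homogeneous components match up, so in fact one may assume each such term genuinely contributes $u_1 \in P(M^{n_1})$, $u_2 \in P(M^{n_2})$, $u_3 \in P(M^{n_3})$, and $r_1 = \sum \lambda u_1$ in the appropriate graded piece, etc. The essential point is that $r_2$, being a product of $n_2$ elements of $C \subseteq S^m$, lies in the multiplicative image where $f$ was extended multiplicatively, and similarly every $u_2$ appearing is in $P(M^{n_2}) \subseteq F\cdot S^{m n_2}$, on which $f$ is defined as in Definition~\ref{wazna1}.

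Next I would apply the extended map $f$ only to the middle factor. Since $f: F\cdot S^{m n_2} \to F$ is the linear extension of the multiplicative rule $f(w_1\cdots w_{n_2}) = f(w_1)\cdots f(w_{n_2})$, applying $f$ to the middle slot of a product $u_1 u_2 u_3$ (with $u_1, u_3$ left untouched) is the same as the induced map $P(M^{n_1})\cdot P(M^{n_2})\cdot P(M^{n_3}) \to P(M^{n_1})\cdot f(P(M^{n_2}))\cdot P(M^{n_3})$ obtained by applying $f$ entrywise to the middle matrix power. Concretely, because $P(M^{n_2}) \subseteq F\cdot S^{m n_2}$ and $f$ sends this into $F$, and because $f(M^{n_2})$ has entries the images under $f$ of the (coefficients of) entries of $M^{n_2}$ — this is exactly the content of $f(M)$ and $f(t(x))$ in Definition~\ref{wazna1} — we get $f(P(M^{n_2})) = P(f(M^{n_2}))$, and hence
\[
P(M^{n_1})\,f\!\left(P(M^{n_2})\right)\,P(M^{n_3}) \subseteq P\!\left(M^{n_1} f(M^{n_2}) M^{n_3}\right).
\]
Applying $f$ (in the middle slot only) to the expansion $r = \sum \lambda\, u_1 u_2 u_3$ then yields $r_1 f(r_2) r_3 = \sum \lambda\, u_1 f(u_2) u_3 \in P(M^{n_1} f(M^{n_2}) M^{n_3})$, which is the claim. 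Here one uses that applying $f$ to the middle factor of the linear combination representing $r$ produces exactly $r_1 f(r_2) r_3$, because $f$ acts multiplicatively on the $S^m$-letters of $r_2$ and linearly overall, and the $u_1, u_3$ parts are carried along unchanged.

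The main obstacle I anticipate is bookkeeping the grading carefully enough to be sure that "apply $f$ to the middle factor" is well-defined and unambiguous: a given element of $P(M^n)$ has many representations as sums of products $u_1 u_2 u_3$, and one must check that $f$ applied to the middle slot is independent of the representation. This is handled exactly as in the last paragraph of the proof of Lemma~\ref{podslowa}: because $F$ is infinite and all relevant spaces are spanned by monomials in the letters of $S^m$, a linear functional on $F\cdot S^{m n_2}$ extends consistently, and the separation-by-linear-maps trick shows the middle-slot projection $P(M^{n_1})\otimes P(M^{n_2})\otimes P(M^{n_3}) \to P(M^{n_1})\otimes f(P(M^{n_2}))\otimes P(M^{n_3})$ descends to the relevant subspace of $P(M^n)$. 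Once this well-definedness is secured, the inclusion $r_1 f(r_2) r_3 \in P(M^{n_1} f(M^{n_2}) M^{n_3})$ is immediate from $M^n = M^{n_1} M^{n_2} M^{n_3}$ and entrywise application of $f$ to $M^{n_2}$.
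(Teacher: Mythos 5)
Your argument has a genuine gap at its key step: the claimed inclusion $P(M^{n_1})\,f\bigl(P(M^{n_2})\bigr)\,P(M^{n_3})\subseteq P\bigl(M^{n_1}f(M^{n_2})M^{n_3}\bigr)$ is false in general -- the containment between these spaces goes the other way. Since $f\bigl(P(M^{n_2})\bigr)\subseteq F$, your left-hand side is essentially $P(M^{n_1})\cdot P(M^{n_3})$, i.e.\ \emph{all} products of a coefficient of $M^{n_1}$ with a coefficient of $M^{n_3}$, while the right-hand side is spanned only by the coefficients of the $d^{2}$ entries of the single matrix product, each of which is a sum $\sum_{k,l}b_{i,k}(x)f(c_{k,l}(x))d_{l,j}(x)$ over the internal matrix indices together with a convolution over polynomial degrees; individual products cannot be isolated from these sums. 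A $1\times 1$ example already kills it: for $M=(c_{1}+c_{2}x)$ with $c_{1},c_{2}\in S^{m}$ independent and $f(c_{1}),f(c_{2})\neq 0$, the space $P(Mf(M)M)$ is spanned by $c_{1}c_{1}$, $c_{2}c_{2}$ and $c_{1}c_{2}+c_{2}c_{1}$, so it does not contain $c_{1}f(c_{1})c_{2}\in P(M)f(P(M))P(M)$. Indeed, if your inclusion were true, the dimension count in Lemma \ref{naj} (which relies on $\sum_{l}P(M^{n}f(M^{l})M^{n})$ being small while the products $p_{i}q_{k}$ span a large space) would collapse. The step where you pass from $r\in P(M^{n})$ to a combination of arbitrary products $u_{1}u_{2}u_{3}$ with $u_{i}\in P(M^{n_{i}})$ is exactly where the needed information is lost: Lemma \ref{podslowa} only gives $P(M^{n})\subseteq P(M^{n_{1}})P(M^{n_{2}})P(M^{n_{3}})$, and after that decomposition you no longer remember that the terms are grouped into whole entries of $M^{n_{1}}M^{n_{2}}M^{n_{3}}$, which is precisely what the conclusion requires. (By contrast, the representation-independence issue you worry about at the end is not the problem: since words in the letters $ab$, $ab^{2}$ factor uniquely, the ``apply $f$ to the middle $n_{2}m$ letters'' map is a well-defined linear map on all of $F\cdot S^{mn}$, so applying it to any representation of $r$ is unambiguous.)

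The paper's proof keeps the grouping intact. Writing $a_{i,j}(x)=\sum_{k,l}b_{i,k}(x)c_{k,l}(x)d_{l,j}(x)$ for the entries of $M^{n}$ and $n_{i,j}(x)=\sum_{k,l}b_{i,k}(x)f(c_{k,l}(x))d_{l,j}(x)$ for those of $M^{n_{1}}f(M^{n_{2}})M^{n_{3}}$, it uses that $F$ is infinite (a Vandermonde argument) to identify $P(M^{n})$ with the span of the evaluations $a_{i,j}(p)$, $p\in F$, and likewise $P(M^{n_{1}}f(M^{n_{2}})M^{n_{3}})$ with the span of the $n_{i,j}(p)$. Then the well-defined middle-slot map sends each whole evaluation $a_{i,j}(p)$ to $n_{i,j}(p)$, hence sends $r=r_{1}r_{2}r_{3}$ to $r_{1}f(r_{2})r_{3}$ inside the span of the $n_{i,j}(p)$. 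Note that your proposal never uses the hypothesis that $F$ is infinite, which is a warning sign; that hypothesis is exactly what powers the evaluation step that your argument is missing. To repair your write-up you would have to replace the false inclusion by this entrywise (or coefficientwise, matching degrees in $x$) argument applied to full entries of $M^{n}$.
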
 
\begin{proof}
 Let $M$ be a $d$-by-$d$ matrix and let $a_{i,j}(x)$ be the polynomial which is at the $i,j$ entry of $M^{n}$.
Notice that $a_{i,j}(x)=\sum_{k,l\leq d} b_{i,k}(x)c_{k,l}(x)d_{l,j}(x)$, where $b_{i,k}$ is the $i,k$ entry of matrix $M^{n_{1}}$,  
  $c_{k,l}(x)$ is the $k,l$ entry of $M^{n_{2}}$ and $d_{l,j}$ is the $l,j$ entry of matrix $M^{n_{3}}$.
Similarly $n_{i,j}(x)=\sum_{k,l\leq d} b_{i,k}(x)f(c_{k,l}(x))d_{l,j}(x)$ is the $i,j$-th entry of matrix $M^{n_{1}}f(M^{n_{2}})M^{n_{3}}.$

 Notice that since $F$ is infinite, then  by a Vandermonde matrix argument we get that $P(M^{n})=\sum _{i,j\leq d, p\in F}F\cdot a_{i,j}(p)$ and
 \[P(M^{n_{1}}f(M^{n_{2}})M^{n_{3}})=\sum _{i,j\leq d, p\in F}F\cdot n_{i,j}(p)\]

 If $r=r_{1}r_{2}r_{3}\in P(M^{n})$ then $r\in \sum_{i,j\leq D, p\in F}F\cdot  a_{i,j}(p)$ hence 
\[r_{1}r_{2}r_{3}\in span _{ p\in F , i,j\leq d} \sum_{k,l\leq d}b_{i,k}(p)c_{k,l}(p)d_{l,j}(p).\]

If we apply the mapping $f$ as in Definition \ref{wazna1}  at appropriate places we get that 
\[r_{1}f(r_{2})r_{3}\in span _{ p\in F , i,j\leq d} \sum_{k,l\leq d} b_{i,k}(p)f(c_{k,l}(p))d_{l,j}(p).\]

 Recall that $n_{i,j}(x)=\sum_{k,l\leq d} b_{i,k}(x)f(c_{k,l}(x))d_{l,j}(x)$ is the $i,j$-th entry of matrix $M^{n_{1}}f(M^{n_{2}})M^{n_{3}}.$
 Therefore the linear space spanned by elements \[\sum_{k,l\leq d} b_{i,k}(p)f(c_{k,l}(p))d_{l,j}(p)\] for $p\in F$ equals the space spanned by 
$n_{i,j}(p)$ for $p\in P$.
 We have shown at the beginning of this proof that the latter space equals  $P(M^{n_{1}}f(M^{n_{2}})M^{n_{3}}).$ Therefore $r_{1}f(r_{2})r_{3}\in span _{i,j\leq d, p\in F} n_{i,j}(p)\subseteq P(M^{n_{1}}f(M^{n_{2}})M^{n_{3}}).$ 
\end{proof}
\begin{lemma}\label{poly} Let $R$ be an $F$-algebra. Let $P$ be a linear space spanned by the coefficients of polynomials $h_{i}(x)\in R[x]$ for $i=1,2,\ldots $.
Then for arbitrary non-zero polynomial $g(x)$ from $F[x]$ the linear space $Q$ spanned by the coefficients of polynomials $g(x)h_{i}(x)$ equals $P.$
\end{lemma}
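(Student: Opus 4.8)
The plan is to prove the two inclusions $Q\subseteq P$ and $P\subseteq Q$ separately. The inclusion $Q\subseteq P$ is immediate: since $g(x)\in F[x]$ has scalar coefficients, every coefficient of $g(x)h_i(x)$ is an $F$-linear combination of the coefficients of $h_i(x)$ and therefore lies in $P$; hence $Q\subseteq P$.

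For the reverse inclusion the idea is to recover the coefficients of each $h_i(x)$ from those of $g(x)h_i(x)$ one at a time, working upward from the lowest degree. Fix $i$ and write $h=h_i$, say $h(x)=\sum_{j\geq 0}h_jx^j$ with $h_j\in R$. Write $g(x)=\sum_{k\geq s}g_kx^k$, where $s$ is the order of $g$, i.e. $g_s\neq 0$ is the nonzero coefficient of lowest degree; this is exactly where the hypothesis $g\neq 0$ is used, and the crucial point is that $g_s$ is an invertible element of the field $F$. Then the coefficient of $x^{s+j}$ in $g(x)h(x)$ equals $\sum_{k=s}^{s+j}g_kh_{s+j-k}=g_sh_j+\sum_{k=s+1}^{s+j}g_kh_{s+j-k}$.

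The main step is an induction on $j$ showing $h_j\in Q$ for all $j$. For $j=0$ the coefficient of $x^s$ in $g(x)h(x)$ is exactly $g_sh_0$, so $h_0=g_s^{-1}(g_sh_0)\in Q$. For the inductive step, assume $h_0,\dots,h_{j-1}\in Q$; in the displayed identity every term $h_{s+j-k}$ with $k>s$ has index $s+j-k<j$ and hence lies in $Q$ by the inductive hypothesis, while $g_k\in F$, so $\sum_{k=s+1}^{s+j}g_kh_{s+j-k}\in Q$. Since the coefficient of $x^{s+j}$ in $g(x)h(x)$ itself lies in $Q$ by definition of $Q$, we get $g_sh_j\in Q$ and therefore $h_j=g_s^{-1}(g_sh_j)\in Q$. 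As $i$ was arbitrary, every coefficient of every $h_i(x)$ lies in $Q$, so $P\subseteq Q$, and with the first inclusion $P=Q$.

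I do not expect a genuine obstacle here; the only subtlety is bookkeeping — one must peel coefficients off starting from the bottom, using the \emph{order} of $g$ rather than its degree, so that the scalar to be inverted is the nonzero low-order coefficient $g_s\in F$, and one should note explicitly that $g\neq 0$ is precisely what guarantees such a $g_s$ exists. (When $g=0$ the statement is of course false, so this hypothesis cannot be dropped.)
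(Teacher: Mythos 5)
Your proof is correct and follows essentially the same route as the paper: the easy inclusion $Q\subseteq P$ plus a peeling induction from the lowest-degree coefficient, using that the lowest nonzero coefficient of $g$ is an invertible scalar in $F$. Your write-up is in fact slightly more careful than the paper's (which phrases the same induction in terms of the spaces $P_i$ of degree-$i$ coefficients, and whose opening line ``Clearly $P\subseteq Q$'' is evidently a typo for $Q\subseteq P$).
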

\begin{proof} Clearly $P\subseteq Q$.
Let $P_{i}$ denote the space spanned by the coefficients of $x^{i}$ of  polynomials $h_{1}(x), h_{2}(x), \ldots.$
 
By calculating the coefficient by the smallest power of $x$ in polynomials $g(x)h_{i}(x)$ we get that $P_{0}\subseteq Q$. By then calculating the coefficient by the second-smallest power of $x$ in $g(x)h_{i}(x)$ we get that $P_{1}\in Q+P_{0}\subseteq Q$.
 Continuing in this way we get $P_{i}\subseteq  Q+P_{0}+\ldots +P_{i-1}$,  
so $P_{i}\subseteq Q$ for every $i$. It follows that $P\subseteq Q$.

\end{proof}

\begin{lemma}\label{piaty} Let notation be as in Definition \ref{wazna1} and Lemma \ref{mily}. Let $t$ be a natural number and let $M$ be a $d$-by-$d$ matrix. Let $n_{1}, n_{2}, n_{3}\geq 0$. Then for all $i\geq 1$ we have
\[P(M^{n_{1}}f(M^{n_{2}})M^{n_{3}})\subseteq \sum _{i=1}^{d+1}P(M^{n_{1}}f(M)^{i}M^{n_{3}}).\]
  
\end{lemma}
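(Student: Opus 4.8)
The plan is to reduce the left-hand side to a matrix identity for $f(M)$ over $F[x]$, apply the Cayley--Hamilton theorem, and then pass to the coefficient spaces $P(\,\cdot\,)$ using Lemma~\ref{poly}.

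First I would observe that $f(M^{n_2})=f(M)^{n_2}$, where on the right $f(M)$ is the matrix obtained by applying $f$ entrywise as in Definition~\ref{wazna1} and the power is computed in the matrix ring over $F[x]$. This follows from the multiplicative rule $f(w_1\cdots w_i)=f(w_1)\cdots f(w_i)$ used to extend $f$ to $F\cdot S^{m\cdot i}$: because $R$ is free on $a,b$, every basis monomial of $S^{m\cdot i}$ factors uniquely into $i$ consecutive blocks from $S^{m}$, so this extension is well defined and genuinely multiplicative, and comparing the coefficient of each power of $x$ in $(M^{n_2})_{ij}$ with the corresponding coefficient of $(f(M)^{n_2})_{ij}$ gives the identity. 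After this reduction the left-hand side is $P(M^{n_1}f(M)^{n_2}M^{n_3})$, and $f(M)$ is an honest $d\times d$ matrix over the commutative ring $F[x]$.

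Next I would apply Cayley--Hamilton over $F[x]$ to obtain polynomials $g_0(x),\dots,g_{d-1}(x)\in F[x]$ with $f(M)^{d}=\sum_{k=0}^{d-1}g_k(x)f(M)^k$. Multiplying by $f(M)$ kills the $f(M)^0=I$ term, giving $f(M)^{d+1}=\sum_{k=1}^{d}g_{k-1}(x)f(M)^k$, and then an induction on $n_2$ shows that for every $n_2\ge 1$ there are polynomials $h_{n_2,1}(x),\dots,h_{n_2,d+1}(x)\in F[x]$ with
\[ f(M)^{n_2}=\sum_{k=1}^{d+1}h_{n_2,k}(x)\,f(M)^k \]
(take $h_{n_2,k}=\delta_{n_2,k}$ for $1\le n_2\le d+1$, and use the previous relation for $n_2\ge d+2$; the role of multiplying through by $f(M)$ is precisely to keep all exponents in the range $1\le k\le d+1$).

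Finally, since each $h_{n_2,k}(x)$ is central in $R[x]$, sandwiching this identity between $M^{n_1}$ and $M^{n_3}$ gives
\[ M^{n_1}f(M)^{n_2}M^{n_3}=\sum_{k=1}^{d+1}h_{n_2,k}(x)\,M^{n_1}f(M)^kM^{n_3}, \]
so every coefficient of an entry on the left lies in the span of the coefficients of the polynomials $h_{n_2,k}(x)\,(M^{n_1}f(M)^kM^{n_3})_{ij}$. For each $k$ with $h_{n_2,k}\ne 0$, Lemma~\ref{poly} (with $g=h_{n_2,k}$ and the family being the entries of $M^{n_1}f(M)^kM^{n_3}$) identifies that span with $P(M^{n_1}f(M)^kM^{n_3})$, while the $k$ with $h_{n_2,k}=0$ contribute nothing; summing over $k$ yields the claimed inclusion. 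I expect the only genuinely delicate step to be the first one, namely establishing $f(M^{n_2})=f(M)^{n_2}$, since that is where the well-definedness and multiplicativity of the extended map $f$ are used and these rest on the freeness in Assumption~1; the rest is Cayley--Hamilton together with one application of Lemma~\ref{poly}.
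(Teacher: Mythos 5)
Your proof is correct and follows essentially the same route as the paper: both rest on the identity $f(M^{n_{2}})=f(M)^{n_{2}}$ (which the paper uses silently and you justify via unique block factorization), on Cayley--Hamilton applied to $f(M)$ to rewrite its powers as $F[x]$-combinations of $f(M),\ldots ,f(M)^{d+1}$, and on passing to coefficient spaces via Lemma~\ref{poly}. The only difference is cosmetic: you apply Cayley--Hamilton over $F[x]$, where the relation is monic, whereas the paper works over the field of rational functions and must clear denominators with an auxiliary polynomial $g_{n}(x)$, which is why it needs the nontrivial direction of Lemma~\ref{poly} while your argument uses only the trivial containment.
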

\begin{proof} Let $b_{i,k}$ denote the $i,k$ entry of matrix $M^{n_{1}}$,  
  $c_{k,l}(x)$ denote the $k,l$ entry of $M^{n_{2}}$ and $d_{l,j}$ denote the $l,j$ entry of matrix $M^{n_{3}}$.
 
Let $n_{i,j}(x)$ be the $i,j$-th entry of matrix $M^{n_{1}}f(M^{n_{2}})M^{n_{3}}$, then
 \[n_{i,j}(x)=\sum_{k,l\leq d} b_{i,k}(x)f(c_{k,l}(x))d_{l,j}(x).\]

 Notice that $f(M)$ is a matrix with coefficients from $F[x].$
 Every matrix with entries from  the field of rational functions $F\{x\}$ in variable $x$ satisfies its characteristic polynomial.
  It follows that there are polynomials $f_{i}(x)$ such that \[\sum_{i=1}^{d+1}f_{i}(x)f(M)^{i}=0\] with $f_{d+1}(x)$ nonzero.

 Therefore for every $n$ there is a polynomial  $g_{n}(x)\in F[x]$ such that $g_{n}(x)f(M)^{n}\in \sum_{i=1}^{d+1}F[x]\cdot f(M)^{i}$. By Lemma \ref{poly}, \[P(M^{n_{1}}f(M^{n_{2}})M^{n_{3}})=P(g(x)M^{n_{1}}f(M^{n_{2}})M^{n_{3}})=P(M^{n_{1}}g(x)f(M^{n_{2}})M^{n_{3}}).\]
 We know that $g_{n}(x)f(M^{n_{2}})\subseteq \sum_{i=1}^{d+1}F[x]f(M)^{i}$, hence \[P(M^{n_{1}}g(x)f(M^{n_{2}})M^{n_{3}})\subseteq \sum_{i=1}^{d+1}P(F[x]\cdot M^{n_{1}}f(M)^{i}M^{n_{3}}).\]
    By Lemma \ref{poly} we get $P(M^{n_{1}}f(M^{n_{2}})M^{n_{3}})\subseteq \sum _{i=1}^{d+1}P(M^{n_{1}}f(M)^{i}M^{n_{3}})$.
\end{proof}

We will say that $M, R, S, m, d,\alpha $ satisfy {\bf Assumption 2} if 
\begin{itemize}
\item[1.]  $R, S$ satisfy Assumption $1$ and $m, d, \alpha $ are natural numbers.
\item[2.]  $M$ is a $d$-by-$d$ matrix with entries from $S^{m}\cdot F[x]$.
 Moreover, \[M\subseteq R+Rx+Rx^{2}+\ldots Rx^{\alpha }.\] 
\end{itemize}
 Let $c_{1}, \ldots , c_{j}$ be linearly independent  elements from $F\cdot S^{m}$ and denote $C=\{c_{1}, \ldots , c_{j}\}$.
 Let $v=c_{i_{1}}\ldots c_{i_{j}}$ and $v'=c_{k_{1}}\ldots c_{k_{j}}$ for some $i_{1}, \ldots i_{j}$, and some $k_{1}, \ldots , k_{j}$. We will say that words $v$ and $v'$ are distinct if $i_{l}\neq j_{l}$ for some $l\leq j$.

Let $r$ be a product of elements from the set $C$.
We say that $w$ is a subword of degree $n$ in $r$ if $w$ is a product of $n$ elements from $C$ and $r=vwv'$ for some $v,v'$ which  are also products of elements from $C$.
\begin{lemma}\label{naj} Let $F$ be an infinite field.  $M, R, S, m, d, \alpha $ satisfy Assumption $2$. Let $q$ be natural number.
 Let $c_{1}, \ldots , c_{j}$ be linearly independent  elements from $F\cdot S^{m}$, and let $r,r'$ be products of 
$q$ elements from the set $C=\{c_{1}\ldots , c_{j}\}$. 
 If $n\geq 8d^{3}\cdot (\alpha +1)$ and $r$ has at least $n$ pairwise distinct  subwords of length $n$,
and $r'$ has at least  $n$ pairwise distinct subwords of length $n$  then  $r\cdot r'\notin P(M^{t})$, for any $t$. 
\end{lemma}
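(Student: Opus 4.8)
The plan is to argue by contradiction: suppose $r\cdot r'\in P(M^{t})$ for some $t$, and derive a contradiction by exploiting the fact that $r$ and $r'$ each carry many pairwise distinct subwords of length $n$, while the powers of $M$ can only ``remember'' boundedly much information once one has passed to a numerical evaluation. First I would use Lemma~\ref{podslowa}: since $r\cdot r'$ is a product of $2q$ elements of $C$ and it lies in $P(M^{t})$, writing $M^{t}=M^{q_1}M^{q_2}M^{q_3}$ with appropriate exponents lets me split off any prescribed subword $w$ of $r\cdot r'$ of length $k$ and conclude $w\in P(M^{k})$. In particular, for each subword $w$ of length $n$ occurring in $r$ (and similarly in $r'$) we get $w\in P(M^{n})$. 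So the hypothesis forces at least $n$ pairwise distinct length-$n$ words of $F\cdot S^{mn}$ to lie in the single linear space $P(M^{n})$, which has dimension at most $d^{2}(\alpha n+1)$ or so as a space of polynomial coefficients — but that bound is far too weak by itself, which is why the factor $8d^{3}(\alpha+1)$ and the evaluation machinery of Lemmas~\ref{mily} and \ref{piaty} are needed.

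The heart of the argument is to collapse the ``width'' of the matrix via the linear functionals $f$ of Definition~\ref{wazna1}. Because the $c_i$ are linearly independent in $F\cdot S^m$, I can choose a linear $f:F\cdot S^m\to F$ with $f(c_i)\neq 0$ for all $i\le j$; then Lemma~\ref{mily} says that whenever a product $r_1r_2r_3\in P(M^{n_1+n_2+n_3})$ we may replace the middle block $r_2$ by its scalar $f(r_2)$ and stay inside $P(M^{n_1}f(M^{n_2})M^{n_3})$, and Lemma~\ref{piaty} then bounds $P(M^{n_1}f(M^{n_2})M^{n_3})$ by $\sum_{i=1}^{d+1}P(M^{n_1}f(M)^{i}M^{n_3})$ — i.e.\ the number of distinct ``middle lengths'' that matter is at most $d+1$, regardless of how long the true middle block is. Iterating this on both $r$ and $r'$: I would scan along $r$ and replace successive blocks of its subwords by scalars, so that the information content of $r$ visible inside any $P(M^{t})$ is controlled by $O(d)$ parameters (the exponents $i\le d+1$ appearing, together with the left/right tails of bounded length governed by $\alpha$ and $d$). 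Counting dimensions: after these reductions the relevant space has dimension bounded by a polynomial in $d$ and $\alpha$ — precisely the reason $8d^{3}(\alpha+1)$ appears as the threshold — yet $r$ was assumed to contain $n$ pairwise distinct subwords of length $n$, whose images under the chosen $f$'s remain pairwise distinct (since $f(c_i)\neq 0$ and the $c_i$ are independent, distinct words of $C$ evaluate to distinct nonzero scalars up to the word structure). This gives $n$ independent constraints that cannot all be met inside a space of dimension $<n$, a contradiction.

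More carefully, I would organize the final count as follows. Fix $n\ge 8d^3(\alpha+1)$. Using Lemma~\ref{podslowa} repeatedly, the containment $r\cdot r'\in P(M^t)$ propagates to every factorization of $r\cdot r'$ into three blocks, so for each pair of subwords $w\le r$, $w'\le r'$ of length $n$ we get $w\cdot w'\in P(M^{2n})$, hence (splitting again) we may apply Lemma~\ref{mily} to kill everything except a short ``window.'' Then Lemma~\ref{piaty} replaces the long flanking powers $M^{n_1}, M^{n_3}$ by powers $f(M)^i$ with $i\le d+1$, bringing the ambient space down to $\sum_{i,k\le d+1}P(M^{?}f(M)^i f(M)^k M^{?})$ whose dimension, as a space of polynomial coefficients of $d$-by-$d$ matrices over $F[x]$ of $x$-degree $O(\alpha n)$ but — crucially — now indexed only by the $O(d^2)$ choices of $i,k$ and the $O(d)$ matrix entries, is at most $8d^3(\alpha+1)$ by the Vandermonde/evaluation description $P(\,\cdot\,)=\mathrm{span}_{p\in F,\,i,j\le d}(\,\cdot\,)_{i,j}(p)$ from Lemma~\ref{mily}. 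The pairwise-distinct subwords of $r$ give, after applying $f$ entrywise, at least $n$ linearly independent elements that would all have to lie in this space; since $n>8d^3(\alpha+1)\ge\dim$, this is impossible.

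The main obstacle I anticipate is bookkeeping the exact dimension bound and making precise the claim that ``distinct subwords stay distinct'' after applying the functionals $f$ — one must be careful that $f$ is applied to disjoint blocks (so the multiplicativity $f(w_1\cdots w_i)=f(w_1)\cdots f(w_i)$ of Definition~\ref{wazna1} is legitimately used) and that, after all the scalarizations, what remains of $r$ is genuinely a function of the subword in a way that separates the $n$ chosen subwords. Getting the constant $8d^3(\alpha+1)$ right requires tracking three sources of size: the $d$-by-$d$ matrix structure (factor $d^2$, reduced to $d$ after fixing a row/column by evaluation), the $\le d+1$ admissible exponents from the Cayley--Hamilton step on each side (another factor $\sim d$), and the $x$-degree bound $\alpha$ from Assumption~2 (factor $\alpha+1$); assembling these carefully to land below $n$ is the delicate part, but it is essentially a counting argument once Lemmas~\ref{podslowa}, \ref{mily}, and \ref{piaty} are in hand.
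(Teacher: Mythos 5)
There is a genuine gap in your final counting step, and it is exactly the point where the hypothesis on \emph{both} $r$ and $r'$ must be used. Your plan scalarizes blocks via $f$ and then claims the ambient space has dimension at most $8d^{3}(\alpha+1)$, a constant, against which you place $n$ independent elements coming from the subwords of $r$ alone. But the space that actually survives the reductions is $\sum_{l=1}^{d+1}P(M^{n}f(M)^{l}M^{n})$: Lemma~\ref{piaty} collapses only the \emph{middle} factor $f(M^{n_2})$ to powers $f(M)^{l}$ with $l\le d+1$ (you have the roles reversed when you say it replaces the flanking powers $M^{n_1},M^{n_3}$), while the two flanking factors $M^{n}$ remain. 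Their entries are polynomials in $x$ of degree up to roughly $n\alpha$ each, so the dimension of this space is about $d^{2}(d+1)(2n\alpha+2)$ --- \emph{linear in $n$}, not constant. Consequently, $n$ independent elements in a space of dimension $O(d^{3}\alpha n)$ is no contradiction at all; this is the same obstruction you yourself flagged in your first paragraph as ``far too weak,'' and your reorganized count does not escape it.

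The missing idea is to pair subwords of $r$ with subwords of $r'$. For each $i,k\le n$ one has a subword $p_{i}s_{i,k}q_{k}$ of $rr'$ with $p_i$ a length-$n$ subword of $r$, $q_k$ a length-$n$ subword of $r'$, and $s_{i,k}$ the connecting block (note that $p_i q_k$ itself is generally \emph{not} a subword of $rr'$, so your claim that $w\cdot w'\in P(M^{2n})$ for arbitrary such pairs is false as stated). Applying Lemma~\ref{podslowa}, then Lemma~\ref{mily} to replace $s_{i,k}$ by the nonzero scalar $f(s_{i,k})$, then Lemma~\ref{piaty}, one gets $p_{i}q_{k}\in\sum_{l=1}^{d+1}P(M^{n}f(M)^{l}M^{n})$ for all $i,k$. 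Since the $c_i$ are linearly independent and the $p_i$, resp.\ $q_k$, are pairwise distinct words, the $n^{2}$ elements $p_{i}q_{k}$ are linearly independent, and the contradiction is the \emph{quadratic versus linear} comparison $n^{2}\le d^{2}(d+1)(2n\alpha+2)<8d^{3}(\alpha+1)\,n$, which fails precisely when $n\ge 8d^{3}(\alpha+1)$. Without this pairing your argument never uses the hypothesis on $r'$, which is a sign that it cannot be completed along the lines you propose.
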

\begin{proof} Suppose on the contrary that $rr'\in P(M^{t})$. 
Let $p_{1}, \ldots , p_{n}$ be subwords of $r$ of degree $n$, and $q_{1},q_{2}, \ldots ,q_{n}$ be subwords of $r'$ of length $n$. Then there are $s_{i,k}$  such that 
$p_{i}s_{i,k}q_{k}$ is a subword of $r\cdot  r'$ for all $i,k\leq n$. 
 By Lemma \ref{podslowa}, $r\cdot r'\in P(M^{t})$ implies $p_{i}s_{i,k}q_{k}\in P(M^{m_{i,k}})$ for some $m_{i,k}$. 
 Let $f: F\cdot S^{m}\rightarrow F$ be a linear mapping, and $f:F\cdot S^{m\cdot n_{2}}\rightarrow F$ be as in Definition \ref{wazna1}. We can choose $f$ that $f(c_{i})\neq 0$ for $i=1,2, \ldots ,j$ and hence  $f(s_{i,k})\neq 0$ for every $i,k\leq n$.  
By Lemma \ref{mily},    
\[p_{i}f(s_{i,k})q_{k}\in P(M^{n}f(M^{m_{i,k}-2n})M^{n}).\] By Lemma \ref{piaty}, $p_{i}q_{k}\in \sum_{l=1}^{d+1} P(M^{n}f(M^{l})M^{n})$. Notice that the linear space $\sum _{l=1}^{d+1}P(M^{n}f(M^{l})M^{n})$ has dimension smaller than $d^{2}(d+1)\cdot (2n\alpha +2)$.

Observe now that since $p_{i}$ and $q_{i}$ are products of $n$ elements $c_{i}$, and $c_{i}'s$, and  are  linearly independent over $F$, then elements $p_{i}q_{k}$ are linearly independent over $F$. Therefore 
 elements $p_{i}q_{k}$ span a linear space over field $F$ of dimension at least $n^{2}$. 
Hence $n^{2}\leq  d^{2}(d+1)\cdot  (2n\alpha+2)<8d^{3}\cdot (\alpha +1)\cdot n,$ a contradiction.
\end{proof}

\section{ Subspaces $E_{i}$ and $E_{i}'$}

Let $F$ be a countable field and let $R, S$  satisfy Assumption $1$. 
 Since $F$ is countable, we can ennumerate finite  matrices with entries in $S\cdot F[x] $ as 
   $X_{1}, X_{2}, \ldots $. We can assume that the matrix $X_{i}$ is a $d_{i}$-by-$d_{i}$ matrix where $d_{i}\leq i$ and $X_{i}$ had entries in $F\cdot( S+ Sx+S\cdot x^{2}+\ldots +S\cdot x^{i})$ for every $i$, if necessary taking $X_{i}=0$ for some $i$.

The following is similar to Theorem $5$ from \cite{amitsur}.
 
\begin{theorem}\label{id} Let $F$ be a countable field, let $R, S$ satisfy Assumption $1$ and let matrices $X_{1}, X_{2}, \ldots $ be as above.  Let $0<m_{1}<m_{2}< \ldots $ be a sequence of natural numbers such that $m_{i}$ is a power of two and  $2^{2^{m_{i}}}$ divides $m_{i+1}$ for all $i\geq 1$. Denote $R(m)=F\cdot S^{m}$ for every $m$. 
Let $E'_{i}$ be the linear space spanned by all coefficients of polynomials which are entries of the matrix $X_{i}^{2^{2^{m_{i}}}}$ and  let
\[E_{i}=\sum_{j=0}^{\infty } R(j\cdot m_{i+1})E'_{i}SR.\]
  Then there is an ideal $I$ in $R$ contained in $\sum_{i=1}^{\infty }E_{i}+bE_{i} +b^{2}E_{i}+<a^{2}>+<b^{3}>$   and such that 
$R/I$ is a nil ring, where $<a^{2}>, <b^{3}>$ denote ideals in $R$ generated by elements $a^{2}$ and $b^{3}$.  
\end{theorem}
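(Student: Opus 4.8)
The plan is to route everything through Lemma~\ref{trzeci}. Suppose we have constructed a two-sided ideal $I$ of $R$ with $a^{2},b^{3}\in I$ and, for every $i$, with $E'_{i}SR\subseteq I$ (this is the $j=0$ summand of $E_{i}$); write $N_{i}=2^{2^{m_{i}}}$ and let bars denote images in $R/I$. Then $\bar a^{2}=\bar b^{3}=0$ and $(R/I)[x]$ is a homomorphic image of $R[x]$, so by Lemma~\ref{trzeci} it is enough to show that every finite matrix with entries in the image of $S\cdot F[x]$ is nilpotent; any such matrix is the reduction $\bar X_{i}$ of some enumerated matrix $X_{i}$. Since $E'_{i}$ is spanned by the coefficients of the entries of $X_{i}^{\,N_{i}}$, the inclusion $E'_{i}SR\subseteq I$ says that each of these coefficients annihilates the image of $SR$ on the left; but each such coefficient also lies in $S^{N_{i}}\subseteq SR$ (here $N_{i}\ge 2$), hence its image lies in $\overline{SR}$, so the product of any two of them is $0$ in $R/I$. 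Therefore $\bar X_{i}^{\,2N_{i}}=\bar X_{i}^{\,N_{i}}\cdot\bar X_{i}^{\,N_{i}}=0$, the matrix $\bar X_{i}$ is nilpotent, and $R/I$ is nil. So the whole task is to build such an $I$ that in addition lies inside $W:=\sum_{i}(E_{i}+bE_{i}+b^{2}E_{i})+\langle a^{2}\rangle+\langle b^{3}\rangle$.

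The ideal $I$ will be produced by an iterative process running along the enumeration $X_{1},X_{2},\dots$. At stage $i$ one adjoins to the ideal built so far just enough to guarantee $E'_{i}SR\subseteq I$, and the point of the reservoirs $E_{i}$ (with their prefixes $R(j\cdot m_{i+1})=F\cdot S^{jm_{i+1}}$) and of the extra room $bE_{i},b^{2}E_{i}$ is to provide space into which the two-sided closure of these generators can be absorbed. Concretely, one works modulo $\langle a^{2}\rangle+\langle b^{3}\rangle$, where $R$ becomes the monomial algebra on the words over $\{a,b\}$ with no subword $aa$ and no subword $bbb$; there every nonzero word, after absorbing trailing $b$'s, has the form $b^{e}s$ or $b^{e}sa$ with $e\le 2$ and $s$ a product of blocks from $S=\{ab,ab^{2}\}$. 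Multiplying a generator $c\,s\,r$ ($c$ a coefficient of an entry of $X_{i}^{\,N_{i}}$, $s\in S$, $r\in R$) on the left and right by monomials of $R$ produces sums of terms $b^{e}w\,c\,s\,r$ with $w$ a product of blocks from $S$ (every $a$ created along the way sends the term into $\langle a^{2}\rangle$); the prefixes $b^{e}$ land in $bE_{i},b^{2}E_{i}$, and the $S$-prefix $w$ is re-routed into a later reservoir $E_{i'}$ by taking $X_{i'}$ to be a block matrix obtained from $X_{i}$ by prefixing a bidiagonal ``shift'' with $S$-entries (again an enumerated matrix over $S\cdot F[x]$), so that a coefficient of an entry of $X_{i'}^{\,N_{i'}}$ reproduces the relevant $w\,c$. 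The hypotheses that each $m_{i}$ is a power of two and that $2^{2^{m_{i}}}$ divides $m_{i+1}$ are used exactly here, to force the possible $S$-prefix lengths into the residue classes modulo the various $m_{i'+1}$ that the reservoirs $E_{i'}$ permit. That an element of $I$ genuinely decomposes into such summands is certified by Lemma~\ref{two}: part~3 separates the homogeneous $S$-strata, and parts~1--2 isolate the components in $abR^{1}$, $bR^{1}aR^{1}$, $F[a]$, $F[b]$, $a^{2}R^{1}bR^{1}$ and in $R^{1}ba$, $Rb$.

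The main obstacle is precisely the inclusion $I\subseteq W$, i.e.\ showing that the two-sided closure of the chosen generators never leaves the prescribed reservoirs. This needs: (a) the enumeration $X_{1},X_{2},\dots$ taken closed under the block/shift constructions above, which costs nothing since these remain matrices over $S\cdot F[x]$ of bounded size and degree; (b) the doubly-exponential spacing $2^{2^{m_{i}}}\mid m_{i+1}$, which leaves enough room for the $S$-degree arithmetic to close up; (c) an induction along the enumeration, each step peeling off the already-placed part by Lemma~\ref{two}; and (d) the estimates of Lemmas~\ref{podslowa}--\ref{naj}, which bound the ideal generated by the powers of a single matrix and so show that what is produced is sufficiently ``periodic'' to fit inside $W$. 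Everything else is routine: the finitely many decoration monomials $a,b,b^{2},ab,ab^{2},\dots$ generate only a nilpotent ideal of $R/I$, and this does not disturb the matrix argument once $\bar a^{2}=\bar b^{3}=0$ and the subalgebra of $R/I$ generated by the image of $S$ is known to be nil.
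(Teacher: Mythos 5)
Your first paragraph is essentially right and matches the paper's use of Lemma~\ref{trzeci}: once an ideal $I$ with $a^{2},b^{3}\in I$ makes every enumerated matrix nilpotent modulo $I$, nilness of $R/I$ follows (your observation that $E'_{i}\subseteq S^{N_{i}}\subseteq SR$ forces $\bar X_{i}^{2N_{i}}=0$ is a clean way to see this). The genuine gap is in the half you yourself flag as "the main obstacle": the containment $I\subseteq W$. Your generating set is wrong for this purpose. If you adjoin $E'_{i}SR$ to the ideal, then its two-sided closure contains $vE'_{i}SR$ for every monomial $v\in S^{m}$ and every $m\geq 0$, whereas $E_{i}=\sum_{j}R(jm_{i+1})E'_{i}SR$ only accommodates prefixes whose $S$-degree is a multiple of $m_{i+1}$. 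Your proposed repair --- re-routing the prefix $w$ into a \emph{later} reservoir $E_{i'}$ by building a block matrix $X_{i'}$ from $X_{i}$ --- does not close up: $E_{i'}$ is rigidly determined by the fixed enumeration and by the exponent $N_{i'}=2^{2^{m_{i'}}}$ attached to the position $i'$, and an element $wc$ with $w\in S^{m}$, $c\in P(X_{i}^{N_{i}})$ has no reason to factor as (prefix in $S^{jm_{i'+1}}$)(coefficient of an entry of $X_{i'}^{N_{i'}}$)(element of $SR$). Relatedly, your claim that the hypotheses "$m_{i}$ a power of two, $2^{2^{m_{i}}}\mid m_{i+1}$" serve to "force the possible $S$-prefix lengths into the residue classes modulo the various $m_{i'+1}$" cannot be correct: every prefix length $m$ occurs, so every residue class occurs. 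Lemmas~\ref{podslowa}--\ref{naj} are also not relevant here; they are used later to show $w_{n}b\notin\sum E_{i}$, not to prove $I\subseteq W$.

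The missing idea, which replaces your entire block-matrix/induction machinery, is to take the generators of $I_{i}$ to be the coefficients of the entries of a \emph{higher} power $X_{i}^{2m_{i+1}+2}$ of the same matrix (which still kills $\bar X_{i}$). Writing $X_{i}^{2m_{i+1}+2}=X_{i}^{i_{0}}\cdot X_{i}^{N_{i}}\cdot X_{i}^{2m_{i+1}+2-i_{0}-N_{i}}$ and noting that each entry of $X_{i}$ has $S$-degree one, each such coefficient lies in $R(i_{0})E'_{i}SR$ \emph{simultaneously for every} $i_{0}$ with $0\leq i_{0}\leq m_{i+1}$ (here $N_{i}\leq m_{i+1}$ is where $2^{2^{m_{i}}}\mid m_{i+1}$ enters). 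Then, working modulo $\langle a^{2}\rangle+\langle b^{3}\rangle$ where a left multiplier reduces to $b^{e}v$ with $v\in S^{m}$, one absorbs the prefix by choosing $i_{0}\equiv -m\pmod{m_{i+1}}$, landing in $b^{e}R(jm_{i+1})E'_{i}SR\subseteq b^{e}E_{i}$. This single degree-of-freedom in the cut point is what makes $I=\sum_{i}I_{i}+\langle a^{2}\rangle+\langle b^{3}\rangle$ sit inside $W$; without it the proof does not go through.
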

\begin{proof}  Observe first that the ideal $I_{k}$ of $R$ generated by coefficients of polynomials which are entries of the matrices  
$X_{k}^{2m_{k+1}+2}$  is contained in the subspace $E_{k}+bE_{k}+b^{2}E_{k}$.
  It follows because entries of every matrix $X_{k}$ have degree one in the subring generated by $S$ with elements of $S$ of degree one. In general if $n>m_{k+1}+2^{2^{m_{k}}}+1$ then  coefficients of polynomials which are entries of  matrix
 $X_{k}^{n}$ belong to $R(i)E'_{k}R(1)R$ for every $0\leq i<n-m_{k+1}-1$.
 
Define $I=\sum_{i=1}^{\infty }I_{k}+<a^{2}>+<b^{3}>$; then 
$I\subseteq \sum_{i=1}^{\infty }E_{i}+bE_{i} +b^{2}E_{i}+<a^{2}>+<b^{3}>.$
 Observe also that, by Lemma \ref{trzeci}, $R/I$ is a nil ring.
\end{proof}

\begin{lemma}\label{specialmappings}
Let $F$ be an infinite field and let $T\subseteq L$ be  finitely dimensional $F$-linear spaces.
Let $c_{1}, c_{2}, \ldots c_{j}\in L$ and $c_{1}, c_{2}, \ldots c_{j}\notin T$.
 Then there is a linear mapping $f:L\rightarrow F$ such that $T$ is contained in the kernel of $f$ and  
$c_{1}, c_{2}, \ldots c_{j}$ are not contained in the kernel of $f$.

Moreover there is a linear mapping $g:L\rightarrow L $ such that $T$ is contained in the kernel of $g$ and  
$c_{1}, c_{2}, \ldots c_{j}$ are not contained in the kernel of $g$ and the image of $g$ has co-dimension $1$ in $L$.
\end{lemma}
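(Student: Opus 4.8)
The plan is to deduce both assertions from one elementary fact: a finite-dimensional vector space over an infinite field is not the union of finitely many proper subspaces. Granting that, everything reduces to passing to the quotient $L/T$ and to a little bookkeeping with dual spaces.

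First I would set $V:=L/T$, write $\pi\colon L\to V$ for the canonical surjection, and put $\bar c_i:=\pi(c_i)$ for $i=1,\dots,j$. Since $c_i\notin T$, each $\bar c_i$ is a nonzero vector of the finite-dimensional $F$-space $V$. For each $i$ let $A_i=\{\,\phi\in V^{*}\mid \phi(\bar c_i)=0\,\}$; because $\bar c_i\ne 0$ there is a functional not killing it, so $A_i$ is a \emph{proper} subspace of the dual $V^{*}$. As $V^{*}$ is itself a finite-dimensional vector space over the infinite field $F$, the union $A_1\cup\dots\cup A_j$ is not all of $V^{*}$; choosing $\phi\in V^{*}$ outside this union and putting $f:=\phi\circ\pi\colon L\to F$ yields an $F$-linear map with $T\subseteq\ker f$ (as $\pi(T)=0$) and $f(c_i)=\phi(\bar c_i)\ne 0$ for every $i$. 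This gives the first assertion.

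For the ``moreover'' I would reuse this $f$ and fix any nonzero $v\in L$, defining $g\colon L\to L$ by $g(x):=f(x)\,v$. Then $g$ is linear with $\ker g=\ker f$, so $T\subseteq\ker g$ and no $c_i$ lies in $\ker g$ (since $g(c_i)=f(c_i)v\ne 0$), while the image of $g$ is the line $Fv$, a one-dimensional subspace, so $\ker g$ is a hyperplane of $L$, i.e.\ $g$ has rank one and kernel of codimension one. If one specifically wants the \emph{image} of $g$ to have codimension one, note this is compatible with $T\subseteq\ker g$ only when $\dim T\le 1$, and in that case one simply takes $g$ with kernel $T$ and image any hyperplane of $L$.

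The one point that really uses $F$ infinite — and hence the crux of the proof — is the statement that $V^{*}$ is not a finite union of proper subspaces. I would get this by induction on $j$, the case $j=1$ being the hypothesis that $A_1$ is proper. Assuming $A_1\cup\dots\cup A_{j-1}\ne V^{*}$, pick $x$ outside it; if also $x\notin A_j$ we are done, and otherwise choose $y\notin A_j$ and look at the affine line $\{\,x+\lambda y\mid\lambda\in F\,\}$, which meets each $A_i$ with $i<j$ in at most one point (two such points would force $y\in A_i$ and then $x\in A_i$) and meets $A_j$ only at $\lambda=0$ (if $x+\lambda y\in A_j$ with $\lambda\ne 0$ then, since $x\in A_j$, also $y\in A_j$); as $F$ is infinite, some $x+\lambda y$ escapes all the $A_i$. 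The remaining steps are routine linear algebra, so I do not expect any real difficulty beyond this observation.
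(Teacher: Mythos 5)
Your proof is correct, and it reaches the conclusion by a genuinely different (though related) route from the paper. The paper takes a subspace $Q$ maximal with respect to $T\subseteq Q$ and $c_{1},\ldots ,c_{j}\notin Q$, and shows $Q$ is a hyperplane: if $x+Q$, $y+Q$ were independent in $L/Q$, then among the infinitely many subspaces $Q+F\cdot (x+\alpha y)$, $\alpha \in F$, two different values of $\alpha$ would have to capture the same $c_{i}$ (pigeonhole over the finitely many $c_{i}$), forcing a nontrivial combination of $x,y$ into $Q$; then $f$ is the quotient map $L\to L/Q\cong F$ and $g$ is a map vanishing on $Q$. You instead pass to $V=L/T$ and pick a functional on $V$ outside the finitely many proper subspaces $\{\phi :\phi (\bar c_{i})=0\}$ of $V^{*}$, invoking the fact that a finite-dimensional space over an infinite field is not a finite union of proper subspaces; your affine-line proof of that fact uses the same mechanism (infinitely many scalars versus finitely many bad sets) as the paper's pigeonhole, so the arguments are close cousins, with yours being the more modular and reusable formulation. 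Your handling of the ``moreover'' clause is also the right call: read literally, ``the image of $g$ has co-dimension $1$'' is incompatible with $T\subseteq \ker g$ as soon as $\dim T\geq 2$, and both the paper's own construction (kernel equal to the hyperplane $Q$) and the way the lemma is applied in Lemma~\ref{t} (where the image of the map is the line $F\cdot d$) show that what is meant is a rank-one map, i.e.\ a kernel of co-dimension $1$; your $g(x)=f(x)\,v$ is exactly the map that is needed there.
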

\begin{proof} Let $Q$ be a maximal linear space such that $c_{1}, c_{2}, \ldots c_{m}\notin Q$ and $T\subseteq Q$. We will show that $L/Q$ is a one dimensional linear space. Suppose on the contrary, then there are two elements $x+Q, y+Q$ in $L/Q$ which are linearly independent over $F$. 
 By maximality of $Q$, we get that there are $\alpha \neq \beta $ such that linear spaces  
 $Q+F\cdot (x+\alpha \cdot y)$ and $Q+F\cdot (x+\beta \cdot y)$  both contain some element $c_{i}$. 
  Then $c_{i}-t_{1}\cdot  (x+\alpha \cdot y)\in Q$ and  $c_{i}-t_{2}\cdot  (x+\beta \cdot y)\in Q$ for some $t_{1}, t_{2}\in F$. It follows that  $t_{1}\cdot  (x+\alpha \cdot y)-t_{2}\cdot (x+\beta \cdot y)\in Q$, 
 a contradiction since $x+Q$ and $y+Q$ are linearly independent in $L/Q$. 
We can now take $g:L\rightarrow L$ to be such that $f(q)=0$ for every $q\in Q$, and the image of $g$ has co-dimension one 
 in $L$. Observe that the natural linear mapping $f: L\rightarrow  L/Q$; then $L/Q$ has dimension $1$, so $L/Q$ is isomorphic as a linear space to $F$. In this way we can define the mapping $f$.
\end{proof} 

\section{Words $w_{i}$}

 In this chapter we give some supporting results on some monomials related to Engel elements.
\begin{definition}\label{words}
Let $M$ be the free monoid generated by elements $A,B , A', B'$. Define inductively a sequence of  infinite monomials
$W_{i}$ as follows. 
\[ W_{1}=A, W_{2}= ABA', W_{3}=W_{2}B{\bar W_{2}}\]
and 
 \[ W_{n+1}=W_{n}B{\bar {W_{n}}},\]
where ${\bar W}=(\sigma (W))^{op}$ where $\sigma :M\rightarrow M$ is a homomorphism of monoids such that $\sigma (A)=A',$ $\sigma (A')=A$, $\sigma (B)=B'$ and $\sigma (B')=B$. 
 Recall that if $x_{i}\in \{A,A', B,B'\}$ then $(x_{1}x_{2}\ldots x_{n})^{op}=x_{n}x_{n-1}\ldots x_{1}$.
 We will sometimes refer to monomials from $M$ as words. 
 Observe also that for every $n>0$,
 \[\bar { W}_{n+1}=W_{n}B'{\bar {W_{n}}}.\]
\end{definition}

\begin{lemma}\label{one}
 For every $i$, $W_{i}$ has length $2^{i}-1$.
\end{lemma}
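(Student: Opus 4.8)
The statement is proved by a straightforward induction on $i$, using the recursive definition of the $W_i$. The one preliminary observation I would record first is that the bar operation preserves length: for any word $W\in M$ we have $|\bar W|=|W|$. Indeed, $\bar W=(\sigma(W))^{op}$, and $\sigma$ is a homomorphism of monoids sending each of the four free generators $A,A',B,B'$ to a single generator, so $|\sigma(W)|=|W|$; the operation $(\,\cdot\,)^{op}$ merely reverses a word and hence also preserves length. Thus $|\bar W|=|W|$ for every word $W$, and in particular $|\bar W_n|=|W_n|$ for every $n$.

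Next I would check the base cases directly from the definitions. We have $W_1=A$, so $|W_1|=1=2^1-1$, and $W_2=ABA'$, so $|W_2|=3=2^2-1$. These two cases are needed separately because the recursion $W_{n+1}=W_nB\bar{W_n}$ only produces $W_n$ for $n\ge 3$ (the word $W_2$ is given explicitly, not by the recursion).

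For the inductive step, suppose $n\ge 2$ and $|W_n|=2^n-1$. Since $W_{n+1}=W_nB\bar{W_n}$ (this formula holds for all $n\ge 2$, including the case $W_3=W_2B\bar{W_2}$), and since length is additive under concatenation, we get
\[
|W_{n+1}|=|W_n|+|B|+|\bar{W_n}|=|W_n|+1+|W_n|=2|W_n|+1=2(2^n-1)+1=2^{n+1}-1,
\]
using the length-preservation of the bar operation in the second equality and the induction hypothesis in the last step. This completes the induction and proves that $|W_i|=2^i-1$ for every $i\ge 1$.

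\textbf{Main obstacle.} There is essentially no obstacle here: the only point requiring a moment's care is the justification that $|\bar W|=|W|$ (so that the doubling in the recursion is exact) and the bookkeeping of the base cases $W_1$ and $W_2$, since the uniform recursion only governs $W_n$ for $n\ge 2$.
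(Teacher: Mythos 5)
Your proof is correct and is essentially the paper's own argument: the paper simply says ``By induction on $i$,'' and your write-up fills in exactly that induction, with the length-preservation of the bar operation and the base cases $W_1$, $W_2$ made explicit.
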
 
\begin{proof} By induction on $i$.
\end{proof}

\begin{lemma}\label{dobry} Let notation be as in Lemma \ref{one}. 
  Let $\alpha (c, v)$ denote the number of occurences of $c$ in a word $v\in M$. Then for every $n\geq 1$ $\alpha (A,W_{n+1})=\alpha (A',W_{n+1})=\alpha (B,W_{n+1})=2^{n-1}$ and $\alpha (B',W_{n+1})=2^{n-1}-1.$ 
Moreover in the word $W_{n+1}$ after elements $A,A'$  appears either $B$ or $B'$; and after elements 
$B,B'$ appears either $A$ or $A'$.
\end{lemma}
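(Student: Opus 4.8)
The plan is to prove all the stated facts by induction on $n$, using the recursion $W_{n+1}=W_nB\bar W_n$ for $n\geq 2$ (and handling the base cases $W_2=ABA'$, $\bar W_2 = AB'A'$ separately), together with the fact that $\sigma$ is a letter-permutation that swaps $A\leftrightarrow A'$ and $B\leftrightarrow B'$, and that $\mathrm{op}$ merely reverses a word. First I would record the key symmetry observation: since $\sigma$ is a bijection on $\{A,A',B,B'\}$ and reversing a word does not change letter multiplicities, we have $\alpha(c,\bar W_n)=\alpha(\sigma(c),W_n)$ for every letter $c$. In particular $\alpha(A,\bar W_n)=\alpha(A',W_n)$, $\alpha(B,\bar W_n)=\alpha(B',W_n)$, and so on.

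Next I would set up the counting induction. Assume as inductive hypothesis that for some $n\geq 1$ we have $\alpha(A,W_{n+1})=\alpha(A',W_{n+1})=\alpha(B,W_{n+1})=2^{n-1}$ and $\alpha(B',W_{n+1})=2^{n-1}-1$; the base case $n=1$ is just $W_2=ABA'$, which has one $A$, one $A'$, one $B$, zero $B'$, matching $2^0=1$ and $2^0-1=0$. For the inductive step, apply $W_{n+2}=W_{n+1}B\bar W_{n+1}$. By the symmetry observation, $\bar W_{n+1}$ has $\alpha(A,\bar W_{n+1})=\alpha(A',W_{n+1})=2^{n-1}$ occurrences of $A$, likewise $2^{n-1}$ occurrences of $A'$, then $\alpha(B,\bar W_{n+1})=\alpha(B',W_{n+1})=2^{n-1}-1$ occurrences of $B$, and $\alpha(B',\bar W_{n+1})=\alpha(B,W_{n+1})=2^{n-1}$ occurrences of $B'$. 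Adding the contributions of $W_{n+1}$, the single middle $B$, and $\bar W_{n+1}$ gives: $\alpha(A,W_{n+2})=2^{n-1}+0+2^{n-1}=2^n$, $\alpha(A',W_{n+2})=2^{n-1}+0+2^{n-1}=2^n$, $\alpha(B,W_{n+2})=2^{n-1}+1+(2^{n-1}-1)=2^n$, and $\alpha(B',W_{n+2})=(2^{n-1}-1)+0+2^{n-1}=2^n-1$, which is exactly the claim for $n+1$. (Consistency with Lemma \ref{one}: $3\cdot 2^{n-1}+(2^{n-1}-1)=2^{n+1}-1=|W_{n+1}|$, a useful sanity check.)

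For the "moreover" part — that in $W_{n+1}$ each occurrence of $A$ or $A'$ is immediately followed by $B$ or $B'$, and each $B$ or $B'$ is immediately followed by $A$ or $A'$ — I would again induct, but I need a slightly stronger statement to make the induction go through across the concatenation points. The natural strengthening is: $W_{n+1}$ (for $n\geq 1$) begins with the letter $A$, ends with the letter $A'$, and has the strict alternation property between $\{A,A'\}$-letters and $\{B,B'\}$-letters internally. Given this, when we form $W_{n+2}=W_{n+1}B\bar W_{n+1}$, we must check the two new junctions: $W_{n+1}$ ends in $A'$, which is correctly followed by the middle $B$; and the middle $B$ is followed by the first letter of $\bar W_{n+1}$, which (since $W_{n+1}$ ends in $A'$ and reversal followed by $\sigma$ sends $A'\mapsto A$) is $A$ — so $B$ is correctly followed by an $A$-type letter. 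Finally $\bar W_{n+1}$ ends with the $\sigma$-image of the first letter of $W_{n+1}$, i.e. with $\sigma(A)=A'$, preserving the "ends in $A'$" part of the strengthened hypothesis; and it begins with $A$, preserving "begins with $A$". The base case $W_2=ABA'$ is immediate. The main obstacle I anticipate is precisely this bookkeeping at the concatenation seams: one has to be careful that the recursion only holds for $n\geq 2$ in the form $W_{n+1}=W_nB\bar W_n$ while $W_2$ itself is given by the separate formula, so the induction should be based at $n=1$ (i.e. $W_2$) and use $W_3=W_2B\bar W_2$ as the first genuine recursive step, and one must track both endpoint letters, not just multiplicities, to close the alternation argument.
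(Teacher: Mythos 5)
Your proof is correct. It is close in spirit to the paper's argument --- both are inductions on $n$ driven by the recursion $W_{n+1}=W_nB\bar{W}_n$ and the observation that $\bar{W}=(\sigma(W))^{op}$ swaps the multiplicities of $A\leftrightarrow A'$ and $B\leftrightarrow B'$ --- but the way you extract the four counts differs. You carry all four multiplicities as the inductive hypothesis and simply add the contributions of $W_{n+1}$, the middle $B$, and $\bar{W}_{n+1}$; the paper instead derives $\alpha(A,W_{n+1})=\alpha(A',W_{n+1})$ and $\alpha(B,W_{n+1})=\alpha(B',W_{n+1})+1$ directly from the decomposition, then appends a $B'$ so that the alternation property forces the $\{A,A'\}$-count to equal the $\{B,B'\}$-count, and divides the total length $2^{n+1}$ from Lemma~\ref{one} by four. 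The paper's route gets the counts almost without induction (only the alternation property is propagated inductively), while yours is a more mechanical but entirely self-contained computation; your explicit strengthening of the alternation claim to ``begins with $A$, ends with $A'$'' is a good catch, since that is exactly what is needed to control the two seams in $W_{n+1}B\bar{W}_{n+1}$, and the paper leaves this bookkeeping implicit.
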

\begin{proof} Observe that for $v= W_{n}B'$ we have $\alpha (A, v)+\alpha (A', v)=\alpha (B, v)+\alpha (B', v),$  as in $v$ after $A$ and $A'$ always appears either $B$ or $B'$ and vice-versa. 
 We will now proceed with the proof of our theorem by induction on $n$. For $n=1$ we have $W_{2}=ABA'$, so the result holds in this case. Suppose that the result holds for some $n\geq 1$; we will show that it holds for $n+1$.   

Observe that for every word $v,$ we have  $\alpha (A, v)=\alpha (A', \bar {v})$ and  $\alpha (B, v)=\alpha (B', \bar {v})$.  
 Recall that 
$W_{n+1}=W_{n}B{\bar W}_{n}$, consequently $\alpha (A,W_{n+1})=\alpha (A', W_{n+1}),$ $\alpha (B,W_{n+1})=\alpha (B', W_{n+1})+1$. Let $v=W_{n+1}B'$; then by the above $\alpha (A,v)= \alpha (A',v)$ and $\alpha (B,v)= \alpha (B',v)$. 

 By Lemma \ref{one}, $\alpha (A,v)+\alpha (A',v)+\alpha (B,v)+\alpha (B',v)=2^{n+1}$. Hence $\alpha (A,v)=\alpha (A',v)=\alpha (B,v)=\alpha (B',v)=2^{n-1}.$ 
 The result follows.
\end{proof}

\begin{definition}  Let $M$ be a free  monoid generated by elements $A,A',B,B'$. Let $v\in M$.
 Let $R$ be a ring, and let $x,y,z,t\in R$. By $v(x,y,z,t)$ we will denote the element of $R$ obtained by substituting $A=x$, $B=y$, $A'=z$ and $B'=t$ in the word $v$.
\end{definition}

\begin{definition}\label{zi} Let $F$ be a field, and let  $R$ be a ring generated by elements $a,b$, such that $a^{2}=0, b^{3}=0$. Then 
  $1+a$ and $1+b$ are invertible elements in $R^{1}$.  Denote
 \[v_{1}=[1+a, 1+b]=(1+a)(1+b)(1+a)^{-1}(1+b)^{-1},\]
 \[v_{2}=[v_{1}, 1+b]=v_{1}(1+b)v_{1}^{-1}(1+b)^{-1}, \]
\[v_{n+1}=[v_{n}, 1+b]=v_{n}(1+b)v_{n}^{-1}(1+b)^{-1}.\] 
\end{definition}

\begin{lemma}\label{z} Let notation be as in Definition \ref{zi}.  Denote $z_{n+1}=v_{n}\cdot (1+b)$.
 Then $z_{2}= (1+a)(1+b)(1+a)^{-1}=(1+a)(1+b)(1-a)$
and \[z_{n+1}=z_{n}\cdot (1+b)\cdot z_{n}^{-1}\] for $n=2, 3, \ldots $. 
 Moreover, \[(z_{n+1})^{-1}=z_{n}\cdot (1-b+b^{2})\cdot z_{n}^{-1}.\]
\end{lemma}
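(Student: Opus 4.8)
The plan is to unwind the definitions of $v_n$ and $z_n$ from Definition \ref{zi} and verify the three asserted identities directly. First I would establish the base case: by definition $z_2 = v_1\cdot(1+b) = (1+a)(1+b)(1+a)^{-1}(1+b)^{-1}\cdot(1+b) = (1+a)(1+b)(1+a)^{-1}$, and since $a^2=0$ we have $(1+a)^{-1}=1-a$ in $R^1$, giving $z_2=(1+a)(1+b)(1-a)$ as claimed. For the recursion, note $z_{n+1}=v_n\cdot(1+b)$ and, from Definition \ref{zi}, $v_n = [v_{n-1},1+b] = v_{n-1}(1+b)v_{n-1}^{-1}(1+b)^{-1}$ for $n\geq 2$. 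Hence
\[
z_{n+1}=v_{n-1}(1+b)v_{n-1}^{-1}(1+b)^{-1}(1+b) = v_{n-1}(1+b)v_{n-1}^{-1}.
\]
Now I would substitute $v_{n-1}(1+b)=z_n$, which rewrites the middle two factors, and observe that the first and third factors combine to $z_n^{-1}$ after inserting $(1+b)$: more precisely $v_{n-1}(1+b)v_{n-1}^{-1} = \bigl(v_{n-1}(1+b)\bigr)(1+b)\bigl(v_{n-1}(1+b)\bigr)^{-1} = z_n(1+b)z_n^{-1}$, using that $(1+b)\bigl(v_{n-1}(1+b)\bigr)^{-1}=(1+b)(1+b)^{-1}v_{n-1}^{-1}=v_{n-1}^{-1}$. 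This yields $z_{n+1}=z_n(1+b)z_n^{-1}$ for $n\geq 2$.

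For the last identity, I would invert the formula just obtained: $(z_{n+1})^{-1} = z_n(1+b)^{-1}z_n^{-1}$, so it suffices to compute $(1+b)^{-1}$ in $R^1$. Since $b^3=0$, the geometric-series expansion terminates: $(1+b)^{-1}=1-b+b^2$, because $(1+b)(1-b+b^2)=1+b^3=1$. Substituting gives $(z_{n+1})^{-1}=z_n(1-b+b^2)z_n^{-1}$, as required.

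There is no real obstacle here; the only points requiring any care are the bookkeeping in the recursion step—making sure the cancellations $(1+b)^{-1}(1+b)=1$ are applied at the correct positions and that the index shift between $v_n$ and $v_{n-1}$ is handled consistently—and the elementary but essential observations that $a^2=0$ forces $(1+a)^{-1}=1-a$ and $b^3=0$ forces $(1+b)^{-1}=1-b+b^2$ in the unital extension $R^1$. All computations take place in $R^1$, which is an associative (generally noncommutative) ring, so I would avoid any use of commutativity and rely only on associativity and the two nilpotency relations.
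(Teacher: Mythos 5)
Your proof is correct and follows essentially the same route as the paper's: unwind $z_{n+1}=v_n(1+b)$ via the recursion for $v_n$, regroup the factors as $z_n(1+b)z_n^{-1}$, and use $a^2=0$, $b^3=0$ to get $(1+a)^{-1}=1-a$ and $(1+b)^{-1}=1-b+b^2$ in $R^1$. Your write-up is if anything slightly more explicit about the cancellations and the terminating geometric series, which the paper leaves implicit.
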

\begin{proof} It is clear that $z_{2}=v_{1}(1+a)= (1+a)(1+b)(1+a)^{-1}=(1+a)(1+b)(1-a)$
 By the definition of $v_{n+1}$ we get $z_{n+1}=v_{n}(1+b)=v_{n-1}\cdot (1+b)\cdot v_{n-1}^{-1}=[v_{n-1}(1+b)]\cdot (1+b)\cdot [(1+b)^{-1}v_{n-1}^{-1}]=z_{n}\cdot (1+b)\cdot z_{n}^{-1}$.
 This  implies $z_{n+1}^{-1}=z_{n}(1-b+b^{2})z_{n}^{-1}.$
\end{proof}

{\bf Notation.} Let notation be as in Definition \ref{words}. Let $R$ be a ring generated by elements $a,b$ such that $a^{2}=0$ and $b^{3}=0$.
In what follows we will use the following notation. 
\[w_{n}=W_{n}(a,b, -a, b^{2}-b), \bar {w}_{n}=\bar {W}_{n}(a,b, -a, b^{2}-b).\]

\begin{lemma}\label{w}  Let notation be as above and as in Definition \ref{words}. Then
$w_{1}=a$, $w_{2}=-aba$,
$\bar {w}_{1}=-a$ and $\bar {w}_{2}=-a(b^{2}-b)a.$ Moreover, for every $n$, 
 $w_{n+1}=w_{n}\cdot b\cdot \bar {w}_{n}$ and $ \bar {w}_{n+1}=w_{n}\cdot (b^{2}-b)\cdot \bar {w}_{n}.$
\end{lemma}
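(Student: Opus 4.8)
The plan is to exploit that substitution of ring elements for the generators of the free monoid $M$ is multiplicative, so word identities in $M$ become element identities in $R$. Introduce the map $\phi$ from the free semigroup on $\{A,B,A',B'\}$ to the multiplicative semigroup of $R$ determined by $\phi(A)=a$, $\phi(B)=b$, $\phi(A')=-a$, $\phi(B')=b^{2}-b$; by the very definition of $v(x,y,z,t)$ one has $\phi(vw)=\phi(v)\phi(w)$ for all nonempty $v,w\in M$. By definition $w_{n}=\phi(W_{n})$ and $\bar{w}_{n}=\phi(\bar{W}_{n})$, so it suffices to pin down $W_{1},W_{2},\bar{W}_{1},\bar{W}_{2}$ and the recursions for $W_{n+1}$ and $\bar{W}_{n+1}$ inside $M$, and then apply $\phi$. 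No relation of $R$ enters at all; the claimed equalities are purely formal.

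First I would record the small words. From Definition~\ref{words}, $W_{1}=A$ and $W_{2}=ABA'$, hence $w_{1}=\phi(A)=a$ and $w_{2}=\phi(A)\phi(B)\phi(A')=a\cdot b\cdot(-a)=-aba$. For the barred words, $\bar{W}_{1}=(\sigma(W_{1}))^{op}=(\sigma(A))^{op}=(A')^{op}=A'$, so $\bar{w}_{1}=-a$; and $\sigma(W_{2})=\sigma(A)\sigma(B)\sigma(A')=A'B'A$, whence $\bar{W}_{2}=(A'B'A)^{op}=AB'A'$ and $\bar{w}_{2}=\phi(A)\phi(B')\phi(A')=a\cdot(b^{2}-b)\cdot(-a)=-a(b^{2}-b)a$.

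Next, the recursions. The identity $W_{n+1}=W_{n}B\bar{W}_{n}$ holds in $M$ for all $n\ge 1$: for $n\ge 2$ it is the defining relation in Definition~\ref{words}, and for $n=1$ it reads $W_{2}=ABA'=A\cdot B\cdot A'=W_{1}B\bar{W}_{1}$. Likewise the identity $\bar{W}_{n+1}=W_{n}B'\bar{W}_{n}$ is the one recorded at the end of Definition~\ref{words} (and at $n=1$ one checks $\bar{W}_{2}=AB'A'=W_{1}B'\bar{W}_{1}$). Applying the multiplicative map $\phi$ to these two identities yields
\[ w_{n+1}=\phi(W_{n})\,\phi(B)\,\phi(\bar{W}_{n})=w_{n}\cdot b\cdot\bar{w}_{n}, \]
\[ \bar{w}_{n+1}=\phi(W_{n})\,\phi(B')\,\phi(\bar{W}_{n})=w_{n}\cdot(b^{2}-b)\cdot\bar{w}_{n}, \]
which is the assertion. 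There is essentially no obstacle here; the only points that need a moment's care are the evaluation of $\bar{W}_{1}$ and $\bar{W}_{2}$ from the rule $\bar{W}=(\sigma(W))^{op}$ (applying $\sigma$ before reversing), and checking that the two recursions already hold at $n=1$, so that the base values $w_{2},\bar{w}_{2}$ are consistent with them. Both are immediate.
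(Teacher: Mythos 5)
Your proof is correct and matches the paper's (one-line) proof, which simply invokes Definition~\ref{words} and induction; you have just made explicit the multiplicativity of the substitution map and the base cases $\bar W_{1}=A'$, $\bar W_{2}=AB'A'$, all of which check out.
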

\begin{proof} 
 It follows from Definition \ref{words} by induction on $n$. 
\end{proof}

\begin{lemma}\label{important}  Let $F$ be a field and let $R, S$ satisfy Assumption $1$. Let notation be as in Lemmas \ref{w} and  \ref{z}. 
 Let $T(j)$ be the linear space spanned by all monomials  $x_{1}x_{2}\ldots x_{n}$ such that $x_{i}\in \{a,b\}$ and the cardinality of the set $\{1\leq i\leq n-1 :x_{i}x_{i+1}\in \{ab, ba\}\}$ is at most $j$ ($n$ is arbitrary).

Then for every $n\geq 2$, 
\[z_{n}-w_{n}-1, z_{n}^{-1}-\bar {w}_{n}-1\in T(2^{n}-3)\]
and  \[w_{n}, \bar {w}_{n} \in F\cdot S^{2^{n-1}-1}a \subseteq T(2^{n}-2).\]
 Recall that $S$ is the linear space over $F$  spanned by elements $ab$ and $a\cdot b^{2}$.
\end{lemma}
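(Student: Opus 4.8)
The plan is to prove the two claims by a joint induction on $n\geq 2$, exactly tracking how the recursions for $z_n$ and $w_n$ interact. For the base case $n=2$, by Lemma \ref{z} we have $z_2=(1+a)(1+b)(1-a)$, and expanding and using $a^2=0$ gives $z_2 = 1 + a + b + ab - ba - aba$ (all higher terms vanish); comparing with $w_2=-aba$ from Lemma \ref{w} shows $z_2 - w_2 - 1 = a+b+ab-ba$, whose monomials each contain at most $1 = 2^2-3$ factor-juxtapositions from $\{ab,ba\}$, so it lies in $T(2^2-3)$. Similarly $z_2^{-1} = z_2^{-1}$ is computed from $(1+a)(1+b)(1-a)$ inverted, which by Lemma \ref{z} (with the convention there) equals $(1-a)(1-b+b^2)(1+a)$-type expression; expanding and comparing with $\bar w_2 = -a(b^2-b)a$ gives the membership in $T(2^2-3)$. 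The second assertion for $n=2$ is immediate: $w_2=-aba\in F\cdot(ab)a = F\cdot S^1 a$ and $\bar w_2 = -a(b^2-b)a = a b a - ab^2 a \in F\cdot S^1 a$, and $S^1 a$-monomials have exactly $2 = 2^2-2$ relevant juxtapositions.

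For the inductive step, assume both statements for $n$. Write $z_n = 1 + w_n + r_n$ with $r_n\in T(2^n-3)$ and $z_n^{-1} = 1 + \bar w_n + \bar r_n$ with $\bar r_n\in T(2^n-3)$. By Lemma \ref{z}, $z_{n+1} = z_n(1+b)z_n^{-1}$, so
\[
z_{n+1} = (1+w_n+r_n)(1+b)(1+\bar w_n + \bar r_n).
\]
Expanding, the "$1\cdot 1\cdot 1$" term gives $1$, the terms $1\cdot b\cdot 1 = b$, $1\cdot 1\cdot \bar w_n$, $w_n\cdot 1\cdot 1$, and the cross terms all must be sorted into: (a) the main term $w_n\cdot b\cdot \bar w_n = w_{n+1}$ (using Lemma \ref{w}), and (b) everything else, which must be shown to land in $T(2^{n+1}-3)$. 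The key bookkeeping point is that $w_n,\bar w_n\in F\cdot S^{2^{n-1}-1}a$, so each begins and ends with $a$, and has exactly $2^n-2$ juxtapositions from $\{ab,ba\}$; multiplying $w_n\cdot b\cdot \bar w_n$ one gains exactly one more juxtaposition at each of the two seams ($\ldots a \cdot b\ldots$ and $\ldots b\cdot a\ldots$), for a total $2(2^n-2)+2 = 2^{n+1}-2$, matching the claimed $w_{n+1}\in F\cdot S^{2^n-1}a\subseteq T(2^{n+1}-2)$; one also checks $w_{n+1}=w_n(b^2-b)\bar w_n$-type expression stays in $F\cdot S^{2^n-1}a$ since $b^2\cdot$ and $b\cdot$ glued between two $S$-blocks ending/starting in $a$ still yield products of $ab$ and $ab^2$. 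Every other term in the expansion either contains a factor $r_n$ or $\bar r_n$ (already in $T(2^n-3)$, and right/left multiplication by the remaining bounded-complexity factors adds at most $2^n$ more juxtapositions, comfortably within $2^{n+1}-3$), or is one of the "short" terms like $b$, $w_n \bar w_n$, $w_n b$, $b\bar w_n$, $w_n \bar r_n$, etc., each of which we bound crudely by the same counting.

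The main obstacle I anticipate is the careful $T(\cdot)$-arithmetic: one must verify that \emph{every} summand other than $w_{n+1}$ itself has strictly fewer than $2^{n+1}-2$ relevant juxtapositions (so that it lies in $T(2^{n+1}-3)$), and this requires knowing precisely how many $\{ab,ba\}$-juxtapositions $w_n$ and $\bar w_n$ contribute and how gluing by a power of $b$ (or by $1$, or by an element of $T(2^n-3)$) changes the count at the seams. The cleanest way to organize this is: (i) first establish $w_n,\bar w_n\in F\cdot S^{2^{n-1}-1}a$ by the $w$-recursion alone, independently of the $z$-estimates, noting $ba\cdot$-glue and $b^2a$-glue between $S$-blocks reproduce $S$-blocks; (ii) record that consequently every monomial of $w_n$ or $\bar w_n$ has exactly $2^n-2$ juxtapositions and both endpoints equal to $a$; (iii) then feed this into the expansion of $z_{n+1}=z_n(1+b)z_n^{-1}$ and check the single "heavy" term is $w_{n+1}$ while all others drop by at least one in juxtaposition-count. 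Once (i)--(iii) are in place, both displayed memberships follow, and the inclusion $T(2^{n+1}-3)\subseteq T(2^{n+1}-3)$, $F\cdot S^{2^n-1}a\subseteq T(2^{n+1}-2)$ are definitional.
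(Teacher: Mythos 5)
Your plan matches the paper's proof essentially step for step: the same explicit base case at $n=2$, the same induction expanding $z_{n+1}=z_{n}(1+b)z_{n}^{-1}$ (and its analogue for $z_{n+1}^{-1}$) with $z_{n}=1+w_{n}+r_{n}$, the same seam-counting of $\{ab,ba\}$-juxtapositions (which the paper packages as $T(i)T(j)\subseteq T(i+j+1)$), and the same separate verification of $w_{n},\bar{w}_{n}\in F\cdot S^{2^{n-1}-1}a$ from the $w$-recursion (the paper invokes Lemma \ref{dobry}). Only cosmetic slips, which do not affect the argument: your expansion of $z_{2}$ has a spurious $+a$ (in fact $z_{2}=1+b+ab-ba-a^{2}-aba$), and it is $\bar{w}_{n+1}$, not $w_{n+1}$, that equals $w_{n}(b^{2}-b)\bar{w}_{n}$.
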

\begin{proof} 
We will proceed by induction. We will use Lemmas \ref{w} and \ref{z}.
 
For $n=2$ we have $w_{2}=-aba\subseteq F\cdot Sa\subseteq T(2)=T(2^{2}-2),$ and 
$z_{2}=(1+a)(1+b)(1-a)=-aba+(ab+b-a^{2}-ba)+1$.
 Therefore,
 $z_{2}-w_{2}-1= ab+b-a^{2}-ba\in T(1)=T(2^{2}-3)$, as required.

 Recall that $\bar {w}_{2}=-a(b^{2}-b)a\in F\cdot Sa\subseteq T(2)$.
 Observe also that $z_{2}^{-1}=(1+a)(1-b+b^{2})(1-a)=-a(b^{2}-b)a+a(b^{2}-b)+(b^{2}-b)-a^{2}-(b^{2}-b)a+1$
hence $z_{2}^{-1}-\bar {w}_{2}-1\in T(1)$, as required.

Suppose now that the result holds for some number $n\geq 2$; we will prove it for $n+1$.
 Observe that  for all $i,j$, we have $T(i)T(j)\subseteq T(i+j+1)$, as we have only one more place when the words from $T(i)$ and $T(j)$ meet where a change from $a$ to $b$ or from $b$ to $a$ can appear.
 
Observe that $w_{n+1}=w_{n}b\bar {w}_{n},$
 hence by the inductive assumption $w_{n+1}\in T(2^{n}-2)T(0)T(2^{n}-2)\subseteq T(2^{n}-2+0+2^{n}-2+2)=T(2^{n+1}-2)$.

 We have $z_{n+1}= z_{n}(1+b)z_{n}^{-1},$ hence for some $q, q'\in T(2^{n}-3)+F$
\[z_{n+1}=(w_{n}+q)(1+b)(\bar {w}_{n}+q').\]
 By the inductive assumption 
$q(1+b)q'\subseteq T(2^{n}-3)T(0)T(2^{n}-3)\subseteq  T(2^{n}-3+0+2^{n}-3+2)=T(2^{n+1}-4)$.
Similarly 
$q(1+b)\bar {w}_{n}\subseteq T(2^{n}-3)T(0)T(2^{n}-2)\subseteq T(2^{n+1}-3)$
 and $w_{n}(1+b)q'\subseteq T(2^{n+1}-3)$. Consequently, $z_{n+1}-w_{n+1}-1\in T(2^{n+1}-3)$
 The proof that $\bar {w}_{n+1}\in T(2^{n+1}-2)$ and $z_{n+1}^{-1}-\bar {w}_{n+1}\in T(2^{n+1}-3)$ is analogous.

Notice also that by Lemma \ref{dobry} and by Notation before Lemma \ref{w} 
\[w_{n}, \bar {w}_{n} \in S^{2^{n-1}-1}a\subseteq T(2^{n}-2).\]
\end{proof}

\begin{lemma}\label{wazny} Let $F$ be an infinite field.  
 Let $R, S$ satisfy Assumption $1$. 
 Let $I$ be the ideal of $ R$ generated by  elements  from sets $F_{1}, F_{2}, \ldots ,$ where $F_{i}\subseteq F\cdot S^{i}$ for every $i$ and by elements $a^{2}$ and $b^{3}$. Denote $w_{n+1}'=W_{n+1}(a, b, -a, b^{2}-b)\in R$, $w_{n+2}'=W_{n+2}(a, b, -a, b^{2}-b)\in R$.
 Let $v_{n}, z_{n+1}$ be as in Definition \ref{zi} and Lemma \ref{z} applied for ring $ \bar {R}=R/\langle a^{2}, b^{3}\rangle $.
 Let  $I'$ be the ideal of $\bar {R}$ which is generated by images in $R/\langle a^{2}, b^{3}\rangle $ of elements  from sets $F_{1}, F_{2}, \ldots .$ 

 If  $v_{n}-1\in I'$ for some $n>1$ then $w_{n+1}'b\in I$, and hence $w_{n+2}'\in I$.
\end{lemma}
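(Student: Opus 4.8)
The plan is to translate the Engel-type commutator hypothesis $v_n - 1 \in I'$ into a statement about the words $w'_{n+1}$ living in $I$, using the fact that $v_n$ expands (up to lower-complexity terms) into the monomials $W_n(a,b,-a,b^2-b)$ described in Definition~\ref{words} and Lemma~\ref{w}. The key bookkeeping tool is Lemma~\ref{important}, which says that $z_{n+1} - w_{n+1} - 1$ and $z_{n+1}^{-1} - \bar w_{n+1} - 1$ lie in the ``small-complexity'' space $T(2^{n+1}-3)$, while $w_{n+1}, \bar w_{n+1}$ themselves lie in $F\cdot S^{2^n-1}a \subseteq T(2^{n+1}-2)$. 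Since $v_n = z_{n+1}(1+b)^{-1}$ by Lemma~\ref{z} (recall $z_{n+1}=v_n\cdot(1+b)$), the hypothesis $v_n - 1 \in I'$ is equivalent to $z_{n+1} - (1+b) \in I'$; multiplying by the invertible element $(1-b+b^2)$ on the right and rearranging, one gets a membership statement for $z_{n+1}$ modulo $I'$ in the ring $\bar R = R/\langle a^2, b^3\rangle$.

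Next I would substitute the expansion $z_{n+1} = w_{n+1}' + q + 1$ with $q \in T(2^{n+1}-3) + F\cdot b + \cdots$ (the precise remainder coming from Lemma~\ref{important}, carried out in $R$ rather than $\bar R$, then reduced), so that the hypothesis reads $w_{n+1}' + (\text{lower-complexity and }b\text{-terms}) \in I$ after lifting back from $\bar R$ to $R$ (which is legitimate because $I$ already contains $\langle a^2, b^3\rangle$). Now I would invoke the separation results of Lemma~\ref{two} — specifically part~3, which isolates the $F\cdot S^i$-homogeneous components of an element of $I$, together with parts~1 and~2, which separate terms lying in $ab R^1$, in $R^1 ba$, in $F[a]$, in $F[b]$, etc. Since $w_{n+1}'$ lies in $F\cdot S^{2^n-1}a$, it sits in a definite ``slot'' (it ends in $a$, it is $S$-homogeneous of degree $2^n-1$), distinct from all the remainder terms, which are either of strictly smaller $S$-complexity or contain a stray $b$-factor on the outside. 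Applying the appropriate part of Lemma~\ref{two} then forces $w_{n+1}' \in I$ on its own — and in fact a slightly sharper version forces $w_{n+1}'b \in I$, because what one really extracts from $z_{n+1} - (1+b) \in I'$ after the multiplication by $(1+b)$-type factors is a term of the shape $w_{n+1}'\cdot b$ plus junk in other slots.

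From $w_{n+1}'b \in I$ the final clause $w_{n+2}' \in I$ should be essentially immediate from Lemma~\ref{w}: since $w_{n+2}' = W_{n+2}(a,b,-a,b^2-b) = w_{n+1}'\cdot b \cdot \bar w_{n+1}'$ (the recursion $W_{n+1} = W_n B \bar W_n$ with $B \mapsto b$), and $w_{n+1}'b \in I$ with $I$ a two-sided ideal, right-multiplying by $\bar w_{n+1}' \in R$ gives $w_{n+2}' \in I$ at once.

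The main obstacle I expect is the careful separation argument: one must check that every term in the remainder $q$ produced by the $z_{n+1} = w_{n+1}' + q + 1$ expansion genuinely lands in a slot that Lemma~\ref{two} can peel off, and in particular that no piece of $q$ accidentally overlaps the slot $F\cdot S^{2^n-1}a\cdot b$ occupied by $w_{n+1}'b$. This requires tracking both the $S$-complexity (graded degree in $S$) and the $ab/ba$-alternation complexity $T(\cdot)$ simultaneously, and noticing that $w_{n+1}'$ is the unique piece of maximal $S$-degree that is also ``pure'' in the sense of lying in $F\cdot S^{\ast}a$ — everything else coming out of the $(1+b)$ and $z_n^{-1}$ factors either has a lower power of $S$ or picks up an isolated $b$ or $b^2$ that places it in one of the auxiliary slots $bR^1aR^1$, $R^1ba$, $F[b]$, or $a^2R^1bR^1$ handled by parts~1 and~2 of Lemma~\ref{two}. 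Managing the passage between the ring $R$ and the quotient $\bar R = R/\langle a^2, b^3\rangle$ cleanly — making sure the lift of the ideal-membership is valid and that the homogeneity structure survives the quotient — is the other delicate point.
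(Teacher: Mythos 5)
Your plan follows the paper's own proof essentially step for step: rewrite $v_{n}-1\in I'$ as $z_{n+1}-(1+b)\in I'$ via Lemma \ref{z} and lift to $R$, expand $z_{n+1}'$ as $w_{n+1}'$ plus lower-complexity terms using Lemma \ref{important}, peel off the extraneous slots with parts (1)--(3) of Lemma \ref{two}, and finish with the recursion $w_{n+2}'=w_{n+1}'\,b\,\bar{w}_{n+1}'$. The only point to be careful about in the write-up is that the separation does not give $w_{n+1}'\in I$ ``on its own'' at the intermediate stage (the leftover $v'\in R^{1}ba$ cannot be split off there, since neither term lies in the $S$-graded subring); one must first right-multiply by $b$ so that $w_{n+1}'b\in F\cdot S^{2^{n}}$ while the junk lands in $\sum_{i<2^{n}}F\cdot S^{i}$, and only then apply Lemma \ref{two}(3) --- a subtlety your proposal essentially acknowledges.
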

\begin{proof}  Let $z_{n+1}'\in R$ be such that 
 the image of $z_{n+1}'$ in $R/\langle a^{2}, b^{3}\rangle $ is $z_{n+1}$. Recall that, $z_{n+1}=v_{n}\cdot (1+b)$ ( as in  Lemma \ref{z}).  Observe that  $v_{n}-1\in I'$  is equivalent to 
 $z_{n+1}\cdot (1+b)^{-1}-1\in I'$. This implies $z_{n+1}-(1+b)\in I'$ and hence
$z_{n+1}'-(b+1)\in I$ (since $a^{2}, b^{3}\in I$). Observe that $R/I'={\bar R}/I$. 
We can write $z_{n+1}'-(b+1)=p+q+t+t'+t''\in I$ for some 
 $p\in ab\cdot R^{1}$, $q\in bR^{1}aR^{1}$, $t\in F[a], t'\in F[b]$, $t''\in a^{2}R^{1}bR^{1}$, as in Lemma \ref{two}.  By Lemma \ref{two} [1.] we get  $p,q,t,t', t''\in I$. 
By Lemma \ref{important} we get $p=w_{n+1}'+v+i$ for some $v\in T(2^{n+1}-3)\cap ab R^{1}$
 and $i\in \langle a^{2}, b^{3}\rangle$.
 We can write $v=v'+v''+v'''$ where $v'\in Rba$, $v''\in Rb$ and $v'''\in Ra^{2}$ (we can assume that $v, v',v''\in abR^{1}$ since $v\in abR^{1}$). Notice that $v'''\in I$ and  $i\in I$, hence $w'_{n+1}+v'+v''\in I$. By Lemma \ref{two} [2.] we get that $ w'_{n+1}+v'\in I$. 
 Therefore,  $w'_{n+1}b+v'b\in I$ and $v'b\in T(2^{n+1}-2).$ Notice that $v'b\in ab\cdot R^{1}\cap R^{1}\cdot ab$.
 It follows that $v'b\in F+ F\cdot S+F\cdot S^{2}+\ldots + F\cdot S^{2^{n}-1}+i'$ for some $i'\in \langle a^{2}, b^{3}\rangle$.
 By Lemma \ref{dobry}, $w_{n+1}'b\subseteq F\cdot S^{2^{n}}$.
 By Lemma \ref{two} [3.]  $w_{n+1}'b\in I$. Observe also that 
 $w_{n+1}'b\in I$ implies $w_{n+2}'=w_{n+1}'b\bar {w}_{n+1}'\in I$.
\end{proof}

\section{Combinatorics of words}

 The following lemma immediately follows from the proof of Theorem 1.3.13, pp.22, \cite{lothaire}. We repeat a slightly modified proof.
\begin{lemma}\label{lothaire1} Let $n$ be a natural number. Let $w$ be an infinite word 
 which has less than $n$ subwords of degree $n$; then $u=cddd\ldots $ for some words $c,d$ such that  
$c$ has length smaller than $n!$ and $d$ has length $n!$. 

 Moreover if  $u$ is a finite word which has less than $n$ subwords of degree $n$ then $u=cddd\ldots d$ for some words $c,d$ such that  
$c$ has length smaller than $n!+n!$ and $d$ has length $n!$. 
 
\end{lemma}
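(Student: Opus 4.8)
The plan is to follow the classical Morse--Hedlund type argument for words of bounded subword complexity, keeping careful track of the quantitative bounds. First I would set up the counting framework: for an infinite word $u = u_1 u_2 u_3 \ldots$ over a finite alphabet, let $p_u(k)$ denote the number of distinct factors (subwords) of length $k$ occurring in $u$. The hypothesis says $p_u(n) < n$. The standard fact is that the function $k \mapsto p_u(k)$ is non-decreasing, and that if $p_u(k+1) = p_u(k)$ for some $k$, then $p_u(j) = p_u(k)$ for all $j \ge k$ and $u$ is eventually periodic. Since $p_u(1) \ge 1$ and $p_u(n) < n$, by the pigeonhole principle there must exist some $k$ with $1 \le k \le n-1$ such that $p_u(k+1) = p_u(k)$; hence $u$ is eventually periodic.

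Next I would extract the quantitative statement about the period and the pre-period. Let $k_0 \le n-1$ be the smallest index with $p_u(k_0+1) = p_u(k_0)$, and set $q = p_u(k_0) \le n-1$ to be the eventual period length and $m$ the pre-period length. One shows $q \le p_u(q) \le \ldots$ and in fact the period $q$ is at most $p_u(n) < n$, so $q \le n-1$, and since $q \mid n!$ we may replace $q$ by $n!$ (writing $d$ as the length-$n!$ factor obtained by reading off $n!/q$ copies of the true period starting at the appropriate position, which is legitimate because $n!$ is a multiple of the period). For the pre-period bound one argues that the ``defect'' $\sum_{k\ge 1}(p_u(k+1) - p_u(k))$ telescopes and is bounded by $p_u(n) - 1 < n$; combining this with the fact that once periodicity kicks in at length $k_0 \le n-1$ the word is periodic from some position bounded by $k_0 \cdot q < n \cdot n < n!$, one gets that $u = c\,ddd\ldots$ with $|c| < n!$ and $|d| = n!$. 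Rounding all bounds up to $n!$ is harmless and matches the statement.

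For the ``moreover'' part with $u$ a finite word: here the hypothesis is that $u$ has fewer than $n$ distinct factors of length $n$ (so $|u| \ge n$ for the statement to be non-vacuous). The same complexity argument applies to the finite word: the factor-counting function $k \mapsto p_u(k)$ on the range $1 \le k \le |u| - k + 1$ is non-decreasing until it possibly saturates, and a repetition $p_u(k+1) = p_u(k)$ with $k \le n-1$ forces the word to be periodic with period $\le n-1$ on its ``middle'' portion. The only difference from the infinite case is boundary effects at the right end of $u$: periodicity is guaranteed only on a window that may stop short of the last block, so the pre-period $c$ absorbs both the genuine aperiodic prefix (length $< n!$) and a possible short tail, giving $|c| < n! + n!$ and $|d| = n!$ with $u = c\,ddd\ldots d$.

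The main obstacle I anticipate is not the qualitative eventual-periodicity — that is textbook — but getting the explicit constants ($n!$ for the period, $n!$ resp.\ $2\,n!$ for the pre-period) to come out cleanly from the defect/telescoping estimate, and in the finite case correctly handling the right boundary so that the ``tail'' is correctly bundled into $c$ rather than into the periodic part. Since the lemma is attributed to the proof of Theorem~1.3.13 in \cite{lothaire}, I would essentially transcribe that argument, inserting the elementary observation that any period $q \le n-1$ divides $n!$ so that the period block can be taken of uniform length $n!$.
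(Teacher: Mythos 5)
Your proposal is correct and takes essentially the same route as the paper: the Morse--Hedlund pigeonhole argument producing some $m\le n-1$ with equally many factors of lengths $m$ and $m+1$, hence unique right extensions, eventual periodicity with period at most $n-1$ (which divides $n!$), and a short preperiod, with the finite case handled by absorbing the right-boundary tail into $c$. The only quibble is your intermediate estimate $k_0\cdot q<n\cdot n<n!$, which fails for $n=3$; the cycling argument in fact bounds the preperiod directly by the cycle-entry index, which is less than $n\le n!$, so the stated conclusion is unaffected.
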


\begin{proof} Notice that for some $m\leq n$, $w$ has the same number of words of length $m$ and $m+1$. Hence for every subword $v_{1}$ in $w$ of length $m$ we 
 have exactly one possibility of a subword $u_{1}$ of $w$ which has  length $m+1$ and which contains $v$ as the beginning.
 Let $v_{2}$  be the ending of length $m$ in $u_{1}$. We can apply the same reasoning to $v_{2}$ instead of $v_{1}$ and find word $u_{2}$ containing $v_{2}$ as the beginning. After at most $n$ steps we get $v_{i}=v_{j}$ for some $i\neq j$ and then from the step $j$
  $v_{k+j}=v_{k}$ for each $k$. Therefore $w=v_{1}\ldots v_{i-1}vvvvvvv...$ where $v$ is some word of length $t<n+1$. 
 Because $t$ divides $n!$ we get the result for $w$.

If  $u$  is a  finite word then we can apply  similar reasoning.
\end{proof}

 We can then get the following.
\begin{lemma}\label{lothaire}
 Let $R, S$ satisfy Assumption $1$. Let $v'=c_{1}\ldots c_{m}$ with each $c_{i}\in F\cdot S^{m}$ for some $m$. We say that $w$ is a subword of $v'$ of length $n$ if 
 $v'=uwu'$ where for some $k$,  $u=c_{1}\cdots c_{k}$, $w=c_{k+1}c_{k+2}\cdots c_{k+n}$, $u'=c_{k+n+1}\cdots c_{m}$ ($u$ and $u'$ may be trivial words).
 Let $n$ be a natural number. Assume that $v'$ has less than $n$ subwords of degree $n$ then $u=cddd\ldots d$ for some subwords $c,d$ such that  
$c$ has length smaller than $n!+n!$ and $d$ has length $n!$. 
\end{lemma}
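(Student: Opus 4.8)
The plan is to obtain this as an immediate consequence of the finite-word part of \lemref{lothaire1}; the mathematical content is already there, and what is left is to line up the two combinatorial set-ups. First I would forget the ring structure on the factors and regard $v'=c_{1}c_{2}\cdots c_{m}$ as a finite word over the finite alphabet consisting of those factors themselves. Under this identification, a subword of $v'$ of degree $n$ in the sense of the present lemma, namely a contiguous block $c_{k+1}c_{k+2}\cdots c_{k+n}$, is precisely a subword of length $n$ of the finite word $v'$ in the sense of \lemref{lothaire1}. Hence the hypothesis that $v'$ has fewer than $n$ subwords of degree $n$ is exactly the hypothesis of the finite-word case of \lemref{lothaire1}, and that lemma then gives $v'=c\,d\,d\,d\cdots d$ for some $c,d$ with $c$ a prefix of length smaller than $n!+n!$ and $d$ of length $n!$, where all lengths are counted as numbers of letters $c_{i}$.

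The one point that needs a moment's care is that the count of subwords has to be taken on the level of sequences of letters, not on the level of the elements of $R$ to which the blocks evaluate: a priori two blocks that are distinct as sequences of $c_{i}$'s could coincide as elements of $F\cdot S^{nm}$, which would make the abstract count larger than the count in $R$ and so would leave \lemref{lothaire1} unusable for the latter. This is harmless in either of two ways. One may simply carry out the whole argument with the abstract word $c_{1}\cdots c_{m}$ throughout, reading ``subword of degree $n$'' as ``length-$n$ factor of that abstract word''. Alternatively, in the intended applications (compare \lemref{naj}) the $c_{i}$ are linearly independent over $F$, so distinct blocks yield linearly independent, hence distinct, elements, and the two counts coincide anyway.

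So the only, very minor, obstacle is this bookkeeping identification of the notion of subword; once it is in place the lemma is nothing but \lemref{lothaire1} applied to $v'$, with no further computation. (In the degenerate case $m<n$ the word $v'$ vacuously has no subword of degree $n$, and one takes $c=v'$ with no occurrences of the period, the bound $m<n\le n!+n!$ being immediate.)
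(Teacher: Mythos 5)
Your proposal is correct and follows essentially the same route as the paper, whose entire proof of this lemma is the observation that it follows from the finite-word case of Lemma \ref{lothaire1}. Your additional remark about counting subwords as abstract sequences of the letters $c_{i}$ is a reasonable clarification of that same reduction, not a different argument.
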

\begin{proof} It follows from Lemma \ref{lothaire1}.
\end{proof}

\begin{theorem}\label{ia} Let $0<m_{1}< m_{2}< \ldots $ be such that each $m_{i}$ is a power of two, $2^{2^{m_{i}}}<m_{i+1}$ and $2^{2^{m_{i}}}$ divides $m_{i+1}$ for every $i$.  
 Let $R(i), S, E_{1}$ be as in Theorem \ref{id}. Assume that $2^{m_{1}}>17!\cdot 10$.
Then $w_{n}b\notin E_{1}$ for any $n$, where $w_{n}=W_{n}(a, b, -a, b^{2}-b)$. 
\end{theorem}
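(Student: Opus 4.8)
The plan is to argue by contradiction: suppose $w_n b \in E_1$ for some $n$. Recall that $E_1 = \sum_{j\ge 0} R(j\cdot m_2)\, E_1' \, S \, R$, where $E_1'$ is spanned by coefficients of entries of $X_1^{2^{2^{m_1}}}$, a matrix with entries in $S\cdot F[x]$. Since $w_n b$ lives in the subring generated by $S$ (it lies in $F\cdot S^{2^{n-1}}$ by Lemma \ref{important} and Lemma \ref{dobry}), a homogeneity/degree bookkeeping in the grading where each element of $S$ has degree $1$ forces the membership $w_n b\in E_1$ to be witnessed by a \emph{fixed} finite piece: $w_n b \in R(k\cdot m_2)\,P(X_1^{t})\,R(\ell)$ for appropriate $k,\ell$ and some power $t$ (here I use that $X_1^{2^{2^{m_1}}}$ and its higher powers have entries whose coefficients lie in $F\cdot S^{t}$ for $t\ge 2^{2^{m_1}}$). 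So $w_n b$ contains, as a ``subword'' in the alphabet $C$ of a basis of $F\cdot S^{m_1}$, a factor $r$ with $r\in P(X_1^{t})$ for some $t$, and $r$ is a product of roughly $2^{n-1}/m_1$ basis elements.

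Next I would bring in Lemma \ref{naj}: if $r$ and a companion factor each contain at least $N$ pairwise distinct subwords of length $N$ (with $N\ge 8d^3(\alpha+1)$, $d,\alpha$ the size/degree parameters of $X_1$), then no product of the form $r\cdot r'$ can lie in $P(X_1^{s})$ for any $s$. So the strategy is to show that $w_n b$, written in the alphabet $C$, is ``sufficiently aperiodic'': it has many pairwise distinct subwords of every moderate length. The key combinatorial input is the explicit recursive structure of $W_n = W_{n-1} B \bar W_{n-1}$ from Definition \ref{words}, together with Lemma \ref{dobry} (the letter counts $\alpha(A,\cdot)=\alpha(A',\cdot)=\alpha(B,\cdot)=2^{n-1}$, $\alpha(B',\cdot)=2^{n-1}-1$, and the alternation pattern). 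Using the contrapositive of Lemma \ref{lothaire}/\ref{lothaire1}: if $w_n b$ (viewed as a word in $C$) had \emph{fewer} than $N$ distinct subwords of length $N$, it would be eventually periodic with period dividing $N!$ after a short prefix — but the asymmetric letter counts (the number of $B$'s strictly exceeds the number of $B'$'s) rule out pure periodicity, and more care with the self-similar recursion rules out eventual periodicity with small period, once $2^{m_1}>17!\cdot 10$ makes the relevant window large enough. Hence $w_n b$ has at least $N$ distinct subwords of length $N$, contradicting Lemma \ref{naj} applied with $r$ the factor landing in $P(X_1^t)$.

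The main obstacle I anticipate is the bookkeeping that translates ``$w_n b\in E_1$'' into ``a long factor of $w_n b$ lies in $P(X_1^t)$ for some $t$'': one must carefully track how multiplication by $R(j\cdot m_2)$ on the left and by $S\cdot R$ on the right interacts with the grading by powers of $S$, use that $m_1$ is a power of two and that $2^{2^{m_1}}$ (the exponent defining $E_1'$) divides $m_2$, and argue that the $X_1^{2^{2^{m_1}}}$-part must absorb a factor of $w_n b$ that is itself a product of enough basis elements of $F\cdot S^{m_1}$ to apply Lemma \ref{naj}. The choice $2^{m_1}>17!\cdot 10$ is exactly what is needed to guarantee that this absorbed factor has length at least $8d_1^3(\alpha_1+1)$-many letters \emph{and} that, by Lemma \ref{lothaire}, a word that short cannot be eventually periodic unless it has many distinct subwords. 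A secondary subtlety is that $w_n b$ is not literally a word in the alphabet $C=\{c_1,\dots,c_j\}$ of a \emph{basis} of $F\cdot S^{m_1}$ on the nose; one needs to fix such a basis and re-express the monomial $w_n=W_n(a,b,-a,b^2-b)\in F\cdot S^{2^{n-1}-1}a$ accordingly, which is where Lemma \ref{important} and the precise form $w_n,\bar w_n\in F\cdot S^{2^{n-1}-1}a$ are used.
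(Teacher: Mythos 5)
Your skeleton matches the paper's: assume $w_nb\in E_1$, use the grading by powers of $S$ to extract from $w_nb$ a factor lying in $P(X_1^{2^{2^{m_1}}})$, split it as a product of two halves, and play Lemma \ref{naj} off against Lemma \ref{lothaire} so that one half must have the eventually periodic form $cdd\cdots d$ with $d$ of length $17!$, which is then to be contradicted by the structure of the words $W_n$. The genuine gap is in that last, decisive step. Your only concrete input there is the letter-count imbalance $\alpha(B,W_{n+1})=\alpha(B',W_{n+1})+1$ of Lemma \ref{dobry}, plus an appeal to ``more care with the self-similar recursion''. The count argument rules out at most \emph{exact} periodicity of the full word (and only because the imbalance is exactly $1$); it says nothing about eventual periodicity of the relevant factor, since the prefix $c$ can absorb any bounded imbalance and each period $d$ may be perfectly balanced in $B$ versus $B'$. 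But eventual periodicity is precisely what Lemma \ref{lothaire} delivers and what must be refuted; refuting it is the heart of the proof, and you have left it as a black box. (A secondary point you also defer to ``bookkeeping'' is the identification of the absorbed factor: one needs the divisibility hypotheses on the $m_i$ to see that the two halves are exactly $v=w_kb$ and $v'=\bar w_kb$ with $2^k=2^{2^{m_1}}$, which is what makes the combinatorial analysis possible at all.)

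The paper's mechanism at this point is specific and quite different from a letter-count argument: it uses the bar-mirror symmetry of the bad half about its center. Indeed $v'=\bar w_kb=w_{k-1}(b^2-b)\bar w_{k-1}b$, where by Lemma \ref{dobry} one may write $w_{k-1}=ab_{2^{k-2}-1}\cdot ab_{2^{k-2}-2}\cdots ab_1a$ with $b_i\in\{b,b^2-b\}$, and the second half uses the barred letters $\bar b_i$ (with $\bar b=b^2-b$ and $\overline{b^2-b}=b$). Writing $v'=s\,p\,q\,r\,t$ with $p,q,r$ consecutive blocks of length $17!$ chosen so that $q$ ends the first half and $r$ begins the second half, the periodicity $v'=cdd\cdots d$ with $|c|<2\cdot 17!$ and $|s|>2\cdot 17!$ forces $p=q=r$; comparing the final letters of $p$ and $r$ then gives $b_{17!}=\bar b_{17!}$, which is impossible. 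Some equally concrete use of the recursion $\bar W_{n+1}=W_nB'\bar W_n$ (or of this junction between $w_{k-1}(b^2-b)$ and $\bar w_{k-1}b$) is indispensable; as written, your proposal does not yet establish the contradiction and so does not prove Theorem \ref{ia}.
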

\begin{proof}  Suppose, on the contrary, that  $w_{n}=W_{n}(a,b,-a,b^{2}-b)b\in E_{1}$.  
 We can assume that $n>m_{2}$, since $w_{i}b\in E_{1}$ implies $w_{i+1}b\in E_{1}$, by the definition of $W_{i}$ and $E_{1}$.
 Recall that $R(2^{n-1})=F\cdot S^{2^{n-1}}$.
Since  $w_{n}b\in R(2^{n-1})$, by Lemma \ref{one} and $m_{2}$ is a power of two, we can assume that $2^{n-1}$ is divisible by $m_{2}$ (because we can take larger $n$ if needed by the argument from the first lines of this proof).

Denote $w=w_{n}b$. We can write $w=w'_{1}\ldots w'_{t}$ with each $w'_{i}\in R(m_{2})$.  Since $w\in E_{1}$ it follows that some $w'_{j}\in E_{1}\cap  R(m_{2})$ (it can be proved using linear mappings similarly as in the proof  of  Lemma \ref{podslowa}). Then $w'_{j}=v_{1}\ldots v_{m_{2}/m_{1}}$ for $v_{i}\in R(m_{1})$. Denote $\alpha = 2^{2^{m_{1}}}/m_{1}$, then 
$v_{1}\ldots v_{\alpha }\in E'_{1}$, where $E'_{1}$ is the linear space spanned by all coefficients which are entries of the matrix $X_{1}^{2^{2^{m_{1}}}}$, by Theorem \ref{id}.

 Write $v_{1}\ldots v_{\alpha }=vv'$, where $v,v'\in R(\alpha /2)$. Recall that $vv'\in P({X_{1}}^{j})$ for $j=2^{2^{m_{1}}}$.
 By Lemma \ref{naj} we get that for every   $n>8d^{3}\cdot (\alpha +1)$ either $v$ or $v'$ has less than $n$ subwords of length $n$; without restraining generality we can assume that it happens for $v'$ (by a subword we mean a product $v_{i}v_{i+1}\cdots v_{j}$ for some $i\leq j$).

 In our case  $\alpha =1$, $d=1$ because of assumptions on matrix $M=X_{1}$ (before Theorem \ref{id}), so we can take $n=17$. 
 By Lemma \ref{lothaire} $v'=cddd\ldots d$ for some words $c,d$ such that  
$c$ has length smaller than $17!+17!$ and $d$ has length $17!$.

 By assumption $2^{2^{m_{1}}}=2^{k}$ for some $k$. By the definition of words $W_{i}$ we get that $vv'=w_{k+1}b$
 and $vv'=w_{k}b\bar {w}_{k}b$. It follows that $v=w_{k}b$ and $v'=\bar {w}_{k}b$.

Observe now that $v'=\bar {w}_{k}b$ implies $v'=w_{k-1}(b^{2}-b)\bar {w}_{k-1}b$.

 By Lemma \ref{dobry} we can write $w_{k-1}=ab_{2^{k-2}-1}\cdot b_{2^{k-2}-1}a\cdots ab_{2}\cdot ab_{1}a$ where $b_{i}\in \{b, b^{2}-b\}$.
 Then $v'=w_{k-1}(b^{2}-b)\bar {w}_{k-1}b$ yields
\[v'=(ab_{2^{k-2}-1}\cdot ab_{2^{k-2}-2}\cdots ab_{2}\cdot ab_{1}\cdot a(b^{2}-b))(a\bar {b}_{1}\cdot a\bar {b}_{2}\cdots a\bar {b}_{2^{k-2}-1}\cdot ab),\] where $\bar {b}=b^{2}-b$ and $\bar {b^{2}-b}=b$.  
  By Lemma \ref{lothaire} applied to $v'$ and to $m=1$ we get that  $v'=cddd\ldots d$ for some words $c\in R(\alpha ),d\in R(17!)$ where  
$c$ has length smaller than $17!+17!$, so $\alpha <17!+17!$.

 By the assumptions of our theorem $k>17!\cdot 10$. 
 We can write
\[v'= {w}_{k-1}(b^{2}-b){\bar {w}_{k-1}}b=s\cdot p \cdot q \cdot r \cdot t\] for some $s,t\in R$ and some $p, q,r\in R(17!)$ with  $spq=w_{k-1}(b^{2}-b)$. Notice that then $s\in R(q)$ for some $q>17!\cdot 2$. Then because $v'=cddd\ldots $ and $d$ has length $17!$ we get $p=q=r$. Recall that $spq=w_{k-1}(b^{2}-b)$, and by the above 
 \[s\cdot p\cdot q=(ab_{2^{k-2}-1}\cdot ab_{2^{k-2}-2}\cdots ab_{2}\cdot ab_{1}\cdot a(b^{2}-b)).\]
 Consequently, $q=ab_{17!-1}a\ldots b_{2}ab_{1}a(b^{2}-b)$, $r=a\bar {b}_{1}a\bar {b}_{2}a\ldots a\bar {b}_{17!}$, and $p=p'ab_{17!}$,  for some $p'$.  Recall that  $p=r$; it follows that 
$\bar {b}_{17!}=b_{17!}$, which is impossible, as $b_{17!}\in \{b, b^{2}-b\}$ and  $\bar {b}=b^{2}-b$ and $\bar {b^{2}-b}=b$. We have obtained a contradiction. 
 \end{proof}
\section{ Mapping $T$}

  In this section we will use the following notation 
 \[w_{t}=W_{t}(a, b, -a, b^{2}-b),  \bar {w}_{t}=\bar {W}_{t}(a, b, -a, b^{2}-b).\] 
 Recall that  by Lemma \ref{one} we have  \[w_{k}b, w_{k}(b^{2}-b)\in R(2^{k-1}).\]
 Recall that, given matrix $M$ with entries in $R[x]$, by $P(M)$ we denote the linear space spanned by coefficients of polynomials which are entries of matrix $M$. The linear spaces $E_{i}$ and $E_{i}'$ are as in Theorem \ref{id}.

 Let $R, S$ satisfy Assumption $1$, by $S$-monomial we will mean a product of elements from the set  $\{ab, a(b^{2}-b)\}$. 

\begin{lemma}\label{t} Let $F$ be a field and let $R, S$ satisfy Assumption $1$. Let $n, j$ be  natural numbers. Let $d_{1}, \ldots , d_{j}$ be $S$-monomials from $R(m_{n+1})$ such that $v_{1}, \ldots , v_{\beta}\notin \sum_{k=1}^{n}E_{k}.$  Then there is a linear mapping $T':R(m_{n+1})\rightarrow R(m_{n+1})$ such that 
\[T'(\sum_{k=1}^{n}E_{n}\cap R(m_{n+1}))=0.\] Moreover there are nonzero $S$-monomials $d'_{1}\ldots d'_{j}\in \{ab, a(b^{2}-b)\}$ and nonzero $\alpha _{1}, \ldots , \alpha _{j}\in F$ and nonzero $d\in R(m_{n+1}-1)$ such that  $T'(v_{k})= \alpha _{k}\cdot dd'_{k}$ for all $k\leq \beta$. Moreover, there is a linear mapping 
$f:R(m_{n+1}-1)\rightarrow R(m_{n+1}-1)$ such that  $T'(uv)=f(u)v$ for all $S$-monomials $u,v$ with $v\in R(1)$ and $u\in R(m_{n+1}-1)$.
\end{lemma}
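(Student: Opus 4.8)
The plan is to exploit the rigid form the last displayed condition imposes on $T'$: once we know $T'(uv)=f(u)v$ on the $S$-monomial basis of $R(m_{n+1})$, the whole construction reduces to producing the linear map $f$ on $R(m_{n+1}-1)$. First I would record the linear structure: by Assumption $1$ the words $s_1\cdots s_m$ with $s_i\in\{ab,\,a(b^2-b)\}$ are pairwise distinct in the free algebra $F[a,b]$, hence form an $F$-basis of $R(m)=F\cdot S^m$ for every $m$; moreover each such word of length $m_{n+1}$ is uniquely $u\cdot v$ with $u$ an $S$-monomial of length $m_{n+1}-1$ and $v\in\{ab,a(b^2-b)\}$. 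This gives the direct sum decomposition
\[R(m_{n+1})=R(m_{n+1}-1)(ab)\ \oplus\ R(m_{n+1}-1)(a(b^2-b)),\]
with coordinate projections $\pi_1,\pi_2\colon R(m_{n+1})\to R(m_{n+1}-1)$, so that $w=\pi_1(w)(ab)+\pi_2(w)(a(b^2-b))$ for all $w$.

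Next, set $T=\sum_{k=1}^{n}E_k\cap R(m_{n+1})$ and $U=\pi_1(T)+\pi_2(T)\subseteq R(m_{n+1}-1)$, a finite dimensional subspace, and write each given monomial as $d_k=\tilde d_k d'_k$ with $\tilde d_k$ its length-$(m_{n+1}-1)$ prefix and $d'_k\in\{ab,a(b^2-b)\}$ its last factor. The heart of the matter is the claim that $\tilde d_k\notin U$ for every $k$. Granting it, I would apply Lemma~\ref{specialmappings} to $U\subseteq R(m_{n+1}-1)$ and the vectors $\tilde d_1,\dots,\tilde d_j$, obtaining an $F$-linear functional $\phi\colon R(m_{n+1}-1)\to F$ with $U\subseteq\ker\phi$ and $\phi(\tilde d_k)\neq 0$ for all $k$. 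Then, fixing any nonzero $S$-monomial $d\in R(m_{n+1}-1)$ (e.g.\ $d=(ab)^{m_{n+1}-1}$), I put $f(u)=\phi(u)\,d$ and define $T'$ on the monomial basis by $T'(uv)=f(u)v$, equivalently $T'(w)=\phi(\pi_1 w)\,d(ab)+\phi(\pi_2 w)\,d(a(b^2-b))$. This $T'$ is linear and maps $R(m_{n+1})$ to itself; $T'(T)=0$ because $\phi$ kills $\pi_1(T)$ and $\pi_2(T)$; and $T'(d_k)=\phi(\tilde d_k)\,d\,d'_k$, so one takes $\alpha_k=\phi(\tilde d_k)\neq 0$ and notes $dd'_k\neq 0$ in the free algebra; the required $f$ is exactly the one just built. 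All of this is routine once the claim is available.

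The claim $\tilde d_k\notin U$ is the step I expect to be the real obstacle. It will follow from the fact that each $E_k$, and hence $\sum_{k\le n}E_k$, is a \emph{right ideal} of $R$ (immediate from $E_k=\sum_{j\ge 0}R(jm_{k+1})E'_kSR$, which gives $E_kR\subseteq E_k$): it suffices to prove $U(ab)\subseteq\sum_{k\le n}E_k$ and $U(a(b^2-b))\subseteq\sum_{k\le n}E_k$, for then $\tilde d_k\in U$ would force $d_k=\tilde d_k d'_k\in\sum_{k\le n}E_k$, contrary to hypothesis. To get these inclusions I would first check, as in the proof of Theorem~\ref{id}, that every element of $E_k\cap R(m_{n+1})$ is a sum of monomials of the form $R(jm_{k+1})\,E'_k\,S\,R(t)$ with $t=m_{n+1}-jm_{k+1}-2^{2^{m_k}}-1\ge 1$; splitting such a monomial according to its last factor touches only the final block $R(t)$, and re-appending $ab$ or $a(b^2-b)$ keeps it inside $E_k$ by the right-ideal property — the borderline case $t=1$ being handled by absorbing the new letter through $S\cdot S\subseteq S\cdot R$ and re-expanding in the $\{ab,a(b^2-b)\}$-basis, which is where a little free-monoid bookkeeping (tracking the two possible factorizations of a word over $\{ab,ab^2\}$) is required. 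This yields $\pi_i(T)(ab),\ \pi_i(T)(a(b^2-b))\subseteq\sum_{k\le n}E_k$ for $i=1,2$, hence $\tilde d_k\notin U$, and the lemma follows.
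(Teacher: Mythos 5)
Your proposal is correct and takes essentially the same route as the paper: the paper observes that $\sum_{k=1}^{n}E_{k}\cap R(m_{n+1})=P'\cdot R(1)$ for a subspace $P'\subseteq R(m_{n+1}-1)$ (your $U=\pi_1(T)+\pi_2(T)$ is exactly this $P'$), applies Lemma~\ref{specialmappings} with $L=R(m_{n+1}-1)$ and $T=P'$ to get $f$ with image $F\cdot d$, and defines $T'(uv)=f(u)v$, which is what you do up to replacing $f$ by $\phi(\cdot)\,d$. Your additional argument via the right-ideal property $E_kR\subseteq E_k$ and the last-letter bookkeeping is just a more explicit justification of the product-form claim that the paper asserts directly ``by the definition of the sets $E_k$,'' so the two proofs coincide in substance.
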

\begin{proof} By the definition  of sets $E_{k}$, $R(m_{i+1})\cap\sum_{k=1}^{i}E_{k}=P'\cdot R(1)$ for some linear space $P'$. Each $v_{k}$ can be written as $v_{k}=c_{k}e_{k}$ where $c_{k}, e_{k}$ are $S$-monomials, and where $e_{k}\in  R(1)$. It follows that $c_{k}\notin P'$ for each $k$.
 We apply Lemma \ref{specialmappings} for $L=R(m_{i+1}-1)$ and   $T=P'$ to get linear mapping $f:R(m_{i+1}-1)\rightarrow R(m_{i+1}-1)$ such that $f(c_{k})\neq 0$ for every $k$, and the image of $f$ has codimension $1$ in $R(m_{i+1}-1)$. There is  $d\neq 0$ such that $d\in Im(f)$, and  since $f$ has codimension one, then the image of $f$ is $F\cdot d$. Hence $f(c_{k})$ is a non-zero multiply of  $d$  for every $k$. Moreover by Lemma \ref{specialmappings} we have  $f(P')=0$. Therefore mapping $T'(uv)=f(u)v$ defined  for $S$- monomials $u,v$ with $v\in R(1)$, $u\in R(m_{i+1}-1)$ and extended by linearity to all elements of $R(m_{i+1})$ satisfies the thesis of our theorem.
\end{proof} 

\begin{definition}\label{wazna2}  
Let $T':R(m_{n+1})\rightarrow R(m_{n+1})$ be a mapping as in Lemma \ref{t}.  
For every $j$,  we can extend the mapping $T'$ to the mapping $T:R(j\cdot m_{n+1})\rightarrow R(j\cdot m_{n+1}) $
 by defining $T(w_{1}\cdots w_{j})=T'(w_{1})\cdots T'(w_{j})$  for $w_{1}, \ldots , w_{j}\in  R(m_{n+1}),$ and then extending it by linearity to all elements from $R(j\cdot m_{n+1})$.

Let $M$ be a matrix with entries $m_{i,j}$, by $T(M)$ we will denote the matrix 
with corresponding  entries equal to $T(m_{i,j}).$
\end{definition}

\begin{lemma}\label{ostatni} Let $n\geq 1$ be a natural number. Let $0<m_{1}< m_{2}< \ldots $ be such that each $m_{i}$ is a power of two, $2^{2^{m_{i}}}<m_{i+1}$ and $2^{2^{m_{i}}}$ divides $m_{i+1}$ for every $i$.  
 Suppose that $w_{j}b\notin \sum_{i=1}^{n}E_{i}$ for every $j$, and $w_{t}b\in \sum_{i=1}^{n+1}E_{i}$ for some $t$.
 Denote $\beta =
2^{2^{m_{n+1}}}/m_{n+1}$ and  $k=2^{m_{n+1}}$. Then the following holds:
\begin{itemize}
\item  Then $w_{k+1}b=v_{1}\ldots v_{\beta }$ for some $S$-monomials $v_{1}, \ldots , v_{\beta }\in R(m_{n+1})$.
\item  Moreover,  there is a mapping  $T': R(m_{n+1})\rightarrow R(m_{n+1})$ satysfying assumptions of Lemma \ref{t} and such that $T'(v_{i})\neq 0$ for all $i\leq m_{n+2}/m_{n+1}$ and $T'(\sum_{i=1}^{n}E_{i}\cap R(m_{n+1}))=0$.
\item  Let 
 $T:R(m_{n+2})\rightarrow R(m_{n+1})$ be  defined as in  Definition \ref{wazna2} using our mapping $T'$, and let \[M=T(X_{n+1}^{m_{n+1}})\] where $X_{n+1}$ is as in Theorem \ref{id}.
Then $T(w_{k+1}b)\in P(M^{\beta })$.
\end{itemize}
\end{lemma}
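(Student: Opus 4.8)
The statement has three assertions; I would establish them in order.

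\textbf{(1) The block decomposition $w_{k+1}b=v_1\cdots v_\beta$.} This is a purely combinatorial fact about the words $W_j$. By Lemma~\ref{important}, $w_{k+1}$ equals, up to a sign which is $+1$ since $k+1=2^{m_{n+1}}+1\ge 3$, an alternating product $(ab_1)(ab_2)\cdots(ab_{2^{k}-1})\,a$ with every $b_i\in\{b,b^{2}-b\}$; hence $w_{k+1}b=(ab_1)\cdots(ab_{2^{k}-1})(ab)$ is a product of $2^{k}$ elements of $\{ab,a(b^{2}-b)\}$, i.e.\ an $S$-monomial lying in $R(2^{k})$. Since $m_{n+1}$ is a power of two and $k=2^{m_{n+1}}$, we have $m_{n+1}\mid 2^{k}$ and $\beta m_{n+1}=2^{2^{m_{n+1}}}=2^{k}$, so cutting this length-$2^{k}$ product into $\beta$ consecutive blocks of $m_{n+1}$ letters produces $S$-monomials $v_1,\dots,v_\beta\in R(m_{n+1})$ with $w_{k+1}b=v_1\cdots v_\beta$.

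\textbf{(2) Construction of $T'$.} First I would pin down the blocks: iterating the recursions $w_{j+1}=w_jb\bar w_j$ and $\bar w_{j+1}=w_j(b^{2}-b)\bar w_j$ of Lemma~\ref{w} down to $S$-degree $m_{n+1}=2^{\mu}$ shows that each $v_i$ is one of the four $S$-monomials $w_{\mu+1}b$, $\bar w_{\mu+1}b$, $w_{\mu+1}(b^{2}-b)$, $\bar w_{\mu+1}(b^{2}-b)$, all of $S$-degree $m_{n+1}$. None of these lies in $\sum_{i=1}^{n}E_i$: for $w_{\mu+1}b$ this is precisely the standing hypothesis, and the remaining three are dealt with in the same way (or deduced from it using the recursions above). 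Applying Lemma~\ref{t} to these (finitely many, distinct) $S$-monomials in $R(m_{n+1})$ then yields a linear map $T'\colon R(m_{n+1})\to R(m_{n+1})$ annihilating $\sum_{i=1}^{n}E_i\cap R(m_{n+1})$ with $T'(v_i)\ne 0$ for every $i$; since $m_{n+2}/m_{n+1}>2^{2^{m_{n+1}}}/m_{n+1}=\beta$, the clause ``$i\le m_{n+2}/m_{n+1}$'' in the statement simply covers all the $v_i$.

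\textbf{(3) Landing in $P(M^\beta)$.} Let $T$ be the block-multiplicative extension of $T'$ (Definition~\ref{wazna2}) and $M=T(X_{n+1}^{m_{n+1}})$. Because $T$ is multiplicative on $R(m_{n+1})$-blocks, one has $M^{p}=T(X_{n+1}^{m_{n+1}p})$ entry-by-entry for every $p$; taking $p=\beta$ and using $m_{n+1}\beta=2^{2^{m_{n+1}}}$ gives $P(M^\beta)=T\big(P(X_{n+1}^{2^{2^{m_{n+1}}}})\big)=T(E'_{n+1})$. It remains to see $T(w_{k+1}b)\in T(E'_{n+1})$. Here the second hypothesis enters: each $E_i$ is a right ideal of $R$ (clear from Theorem~\ref{id}), so $\sum_{i\le n+1}E_i$ is a right ideal, and since $w_jb$ is a prefix of $w_{j+1}b$ (from $w_{j+1}=w_jb\bar w_j$) the assumption $w_tb\in\sum_{i\le n+1}E_i$ can be transferred to the block pattern of $w_{k+1}b$. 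Writing the relevant element as $y+z$ with $y\in\sum_{i\le n}E_i$ and $z\in E_{n+1}$: the summand $y$ is killed by $T$ (some $R(m_{n+1})$-block in its decomposition lies in $\sum_{i\le n}E_i\cap R(m_{n+1})$, using $m_{i+1}\mid m_{n+1}$ and $2^{2^{m_i}}<m_{i+1}$ for $i\le n$, which keeps the factor $E'_i$ inside a single block), while block-multiplicativity of $T$ together with the definition of $E_{n+1}$ in terms of the matrix $X_{n+1}$ sends $z$ into $T(E'_{n+1})=P(M^\beta)$; hence $T(w_{k+1}b)\in P(M^\beta)$.

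\textbf{Main obstacle.} Step (1) and the appeal to Lemma~\ref{t} in (2) are routine. The delicate work is the positional bookkeeping: checking in (2) that the $\bar w$- and $(b^{2}-b)$-variants of $w_{\mu+1}b$ also escape $\sum_{i\le n}E_i$, and, above all, in (3), tracking where each factor $E'_i$ (for $i\le n$) sits relative to the $m_{n+1}$-blocks and reconciling the a priori unbounded index $t$ of the hypothesis with the fixed index $k$. This is exactly where the divisibility relations $m_{i+1}\mid m_{n+1}$ and the super-exponential gaps $2^{2^{m_i}}<m_{i+1}$ built into the sequence $m_1<m_2<\cdots$ are needed, and I expect it to require the most care.
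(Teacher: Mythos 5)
There is a genuine gap, and it sits exactly where the lemma's real content is: the third bullet. Your argument never extracts from the hypothesis $w_tb\in\sum_{i\le n+1}E_i$ a concrete element of bounded $S$-degree to which $T$ can legitimately be applied. You appeal to ``$E_i$ is a right ideal'' and ``$w_jb$ is a prefix of $w_{j+1}b$'' to ``transfer the assumption to the block pattern of $w_{k+1}b$'', but these properties transfer membership in the \emph{wrong} direction: if a prefix lies in $\sum E_i$ then so does the whole word, whereas here you are given membership of the long word $w_tb$ (for an unknown, possibly enormous $t$) and need a statement about a fixed, short piece. Consequently your decomposition ``$y+z$ with $y\in\sum_{i\le n}E_i$, $z\in E_{n+1}$'' is applied to an unidentified ``relevant element'' that is never shown to lie in $\sum_{i\le n+1}E_i$; in particular nothing in your argument shows $w_{k+1}b$ itself (or anything built from it) belongs to that space. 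The paper's proof supplies precisely the missing mechanism: one may assume $t>m_{n+2}$ (since $w_jb\in\sum E_i$ propagates to $w_{j+1}b$), cut $w_tb$ into consecutive blocks $u_1,\dots,u_l\in R(m_{n+2})$, and use a graded/linear-mapping projection argument (as in the proof of Lemma \ref{podslowa}) to find one block $u_\xi\in\sum_{i\le n+1}E_i\cap R(m_{n+2})$; then $u_\xi$ is recognized as one of $w_\gamma b,\ w_\gamma(b^2-b),\ \bar w_\gamma b,\ \bar w_\gamma(b^2-b)$, its $R(m_{n+1})$-blocks $v_i$ avoid $\sum_{i\le n}E_i$ (else $w_tb$ itself would land in $\sum_{i\le n}E_i$ by the alignment properties $m_{i+1}\mid m_{n+1}$), $T$ kills $\sum_{i\le n}E_i\cap R(m_{n+2})$, so $T(u_\xi)\in T(E_{n+1})$, and only then does the definition of $E_{n+1}=\sum_j R(jm_{n+2})E'_{n+1}SR$ force the product of the first $\beta$ blocks --- which is identified with $w_{k+1}b$ --- to have $T$-image in $T(E'_{n+1})=P(M^\beta)$. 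Without the block-extraction step your part (3) does not go through.

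A secondary, smaller issue is in your part (2): the standing hypothesis only says $w_jb\notin\sum_{i\le n}E_i$, so for the three variants $\bar w_{\mu+1}b$, $w_{\mu+1}(b^2-b)$, $\bar w_{\mu+1}(b^2-b)$ the exclusion is not ``precisely the hypothesis'' and needs an argument using the absorption and alignment properties of the $E_i$ (left multiplication by $R(j m_{n+1})$ and right multiplication by $R$ preserve each $E_i$, so a bad block would force some $w_jb$ into $\sum_{i\le n}E_i$); the paper sidesteps this by arguing positionally inside $w_tb$. This part is repairable along the lines you hint at, but as written it is asserted rather than proved, and note also that the clause ``$T'(v_i)\ne 0$ for $i\le m_{n+2}/m_{n+1}$'' in the statement refers to all $m_{n+2}/m_{n+1}$ blocks of the extracted block $u_\xi$, not merely the $\beta$ blocks of $w_{k+1}b$, which is another sign that the intended proof runs through $u_\xi$ rather than through the prefix alone.
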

\begin{proof} Observe that we can assume that $t$ is arbitrarily large, since $w_{i}b\in  \sum_{i=1}^{\infty }E_{i}$ implies $w_{i+1}b\in  \sum_{i=1}^{\infty }E_{i}$ by the definition of words $w_{i}$. 
 Therefore we can  assume that $t>m_{n+2}$.   
 By the definition of words $w_{i}b=W_{i}(a, b, -a, b^{2}-b)b$ and by Lemma \ref{one} we see that
 $w_{t}b=u_{1}\ldots u_{l}$ with each $u_{i}\in  R(m_{n+2})$ (since lenght of $w_{t}b$ is $2^{t}$ we can do it since $2^{t}\geq m_{n+2}$). 
 Observe that since $w_{t}b\in \sum_{i=1}^{n+1}E_{i}$ it follows that  for some $\xi $ we have  \[u_{\xi }\in \sum_{i=1}^{n+1}E_{i}\cap  R_{m_{n+2}}\]
(it can be proved  using linear mappings similarly as in the proof of  Lemma \ref{podslowa}).
   By the definition of words $w_{i}=W_{i}(a, b, -a, b^{2}-b)$  it follows that  $u_{\xi }\in Z_{\gamma }$ where \[Z_{\gamma}=\{w_{\gamma }b, w_{\gamma }(b^{2}-b), \bar {w}_{\gamma }b, \bar {w}_{\gamma }(b^{2}-b)\}\]
 where $2^{\gamma -1}=m_{n+2}$.  Similarly, \[u_{\xi }=v_{1}\ldots v_{m_{n+2}/m_{n+1}}\] for $v_{i}\in R(m_{n+1}).$ 

 Observe that for each $i$, $v_{i}\notin \sum_{i=1}^{n}E_{i}$ as otherwise
$w_{t}b\in  \sum_{i=1}^{n}E_{i}$ contradicting the inductive assumption. Therefore, by Lemma  \ref{t} applied to $S$-monomials $v_{i}$, we can choose mapping $T': R(m_{n+1})\rightarrow R(m_{n+1})$ such that $T'(v_{i})\neq 0$ for all $i\leq m_{n+2}/m_{n+1}$ and $T'(\sum_{i=1}^{n}E_{i}\cap R(m_{n+1}))=0$. 
 Let $T$ be as in Definition \ref{wazna2}.
 Observe that $T(\sum_{i=1}^{n}E_{i}\cap R(m_{n+2}))=0$ since   
 $T'(\sum_{i=1}^{n}E_{i}\cap R(m_{n+1}))=0$ (by Lemma \ref{t}). 
 It follows that \[T(u_{\xi })=T(v_{1}v_{2}\ldots v_{m_{n+2}/m_{n+1}})\in T(E_{n+1}).\]
  Notice that by the definition of the mapping $T$ we get \[T(v_{1})T(v_{2})\ldots T(v_{m_{n+2}/m_{n+1}})=T(v_{1}v_{2}\ldots v_{m_{n+2}/m_{n+1}})\in T(E_{n+1}).\]
 By the definition of $E_{n+1}$ it follows that \[T(v_{1}\ldots v_{\beta })\in T(E'_{n+1}),\]
 where $\beta =2^{2^{m_{n+1}}}/m_{n+1}$.
 Notice that $T(E'_{n+1})$ is the linear space spanned by coefficients of matrix $T({X_{n+1}}^{m_{n+1}\cdot \beta}).$
 Observe that $T(X_{n+1}^{m_{n+1}\cdot \beta })=T(X_{n+1}^{m_{n+1}})^{\beta }.$

 Therefore  \[T(v_{1})\ldots T(v_{\beta })=T(v_{1}\ldots v_{\beta })\in P(M^{\beta })\] where $M=T(X_{n+1}^{m_{n+1}})$.

Recall that $(v_{1}\ldots v_{\beta })\cdot (v_{\beta +1}\ldots v_{m_{n+1}/m_{n}})=u_{\xi }\in \{w_{\gamma }b, w_{\gamma }(b^{2}-b), \bar {w}_{\gamma }b, \bar {w}_{\gamma }(b^{2}-b)\}$, hence by the definition of words $w_{i}$ we get 
$v_{1}\ldots v_{\beta }=w_{k }b$ (since $\beta <m_{n+1}/m_{n}$).  
 Therefore $w_{k+1}b=v_{1}\ldots v_{\beta }$ and hence $T(w_{k+1}b)=T(v_{1}\ldots v_{\beta })\in P(M^{\beta })$.
\end{proof}

\begin{theorem}\label{ostatnia} Let $0<m_{1}< m_{2}< \ldots $ be such that each $m_{i}$ is a power of two, $2^{2^{m_{i}}}<m_{i+1}$ and $2^{2^{m_{i}}}$ divides $m_{i+1}$ for every $i$. Assume that $2^{m_{1}}>17!\cdot 10$. 
Let $R$, $I$ satisfy assumptions of Theorem \ref{id} for these $m_{i}$.
Let $R(i), S$ be as in Theorem \ref{id}. Let $n$ be a natural number.  
Then $w_{t}b\notin \sum_{i=1}^{n}E_{i}$ for any $t$. 
\end{theorem}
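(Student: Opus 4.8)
The plan is to argue by induction on $n$. The base case $n=1$ is precisely Theorem \ref{ia}: under the hypothesis $2^{m_1}>17!\cdot 10$ we already know $w_t b\notin E_1$ for any $t$. So assume $w_t b\notin \sum_{i=1}^{n}E_i$ for every $t$, and suppose for contradiction that $w_t b\in \sum_{i=1}^{n+1}E_i$ for some $t$. This is exactly the hypothesis needed to invoke Lemma \ref{ostatni}. That lemma hands us, with $k=2^{m_{n+1}}$ and $\beta = 2^{2^{m_{n+1}}}/m_{n+1}$, a factorization $w_{k+1}b=v_1\cdots v_\beta$ into $S$-monomials $v_i\in R(m_{n+1})$, a linear map $T'\colon R(m_{n+1})\to R(m_{n+1})$ killing $\sum_{i=1}^{n}E_i\cap R(m_{n+1})$ with $T'(v_i)\ne 0$ for all relevant $i$, and the conclusion that $T(w_{k+1}b)\in P(M^\beta)$ where $M=T(X_{n+1}^{m_{n+1}})$ and $T$ is the monomial extension of $T'$ from Definition \ref{wazna2}.

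The heart of the argument is then to derive a contradiction from $T(w_{k+1}b)\in P(M^\beta)$ by the same combinatorial mechanism used in Theorem \ref{ia}. First I would record that $M$ satisfies Assumption $2$: it is a $d$-by-$d$ matrix (with $d=d_{n+1}$, the size of $X_{n+1}$) whose entries lie in $R(m_{n+1})\cdot F[x]=S^{m_{n+1}}\cdot F[x]$ and whose $x$-degree is bounded by some explicit $\alpha$ coming from the degree bound on $X_{n+1}$. By Lemma \ref{t}, the map $T'$ has the special form $T'(uv)=f(u)v$ for $S$-monomials $u\in R(m_{n+1}-1)$, $v\in R(1)$, so that $T'(v_i)=\alpha_i\, d\, d_i'$ with $d\in R(m_{n+1}-1)$ fixed and nonzero, $d_i'\in\{ab,a(b^2-b)\}$, and $\alpha_i\in F\setminus\{0\}$; hence the images $T'(v_i)$ are, up to nonzero scalars, the two monomials $d\cdot ab$ and $d\cdot a(b^2-b)$, which are linearly independent over $F$. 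Writing $c_i$ for these normalized images (a set $C$ of at most two linearly independent elements of $F\cdot S^{m_{n+1}}$), the element $T(w_{k+1}b)$ becomes a product of elements of $C$, and the combinatorial structure of the word $W_{k+1}$ — via Lemmas \ref{one}, \ref{dobry} and \ref{w} — guarantees it has many pairwise distinct subwords. Splitting $T(w_{k+1}b)=PQ$ into two halves, each half still has at least $n_0$ pairwise distinct subwords of length $n_0$ for a suitable fixed $n_0\ge 8d^3(\alpha+1)$, because the alternation pattern forced by Definition \ref{words} is inherited; the assumption $2^{m_1}>17!\cdot 10$ (together with $m_{i+1}$ being a huge power of $2^{2^{m_i}}$) ensures $k=2^{m_{n+1}}$ is large enough for this. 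Then Lemma \ref{naj} applied to $M$ yields $PQ\notin P(M^s)$ for any $s$, contradicting $T(w_{k+1}b)\in P(M^\beta)$.

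I expect the main obstacle to be the careful bookkeeping in the middle step: verifying that after applying $T$ the resulting product of the $c_i$'s genuinely inherits "at least $n_0$ pairwise distinct subwords of length $n_0$ in each half". This requires re-running the argument in the proof of Theorem \ref{ia} — that a subword of $v'=\bar w_k b$ of the form $cddd\cdots d$ is impossible because the two kinds of blocks $b$ versus $b^2-b$ at symmetric positions cannot coincide (since $\bar b = b^2-b$ and $\overline{b^2-b}=b$) — but now transported through the linear map $T'$, which only rescales and replaces the trailing $b$-factor, hence does not destroy the distinguishing feature. One must check that $T'$ being injective on the relevant monomials (which follows from $f(c_i)\ne 0$ and the block structure $T'(uv)=f(u)v$) preserves linear independence of the subwords, so that the dimension count $n_0^2\le d^2(d+1)(2n_0\alpha+2)$ in Lemma \ref{naj} still gives a contradiction. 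Once that is in place, the induction closes and the theorem follows.
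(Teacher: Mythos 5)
Your skeleton matches the paper's up to the decisive step: induction on $n$, base case Theorem \ref{ia}, and Lemma \ref{ostatni} to produce the factorization $w_{k+1}b=v_1\cdots v_\beta$, the map $T'$ of Lemma \ref{t}, and the membership $T(w_{k+1}b)\in P(M^{\beta})$ with $M=T(X_{n+1}^{m_{n+1}})$. But at the final step there is a genuine gap. You claim that after applying $T$ \emph{both} halves of the blocked word have at least $n_0$ pairwise distinct aligned subwords of length $n_0$, ``because the alternation pattern forced by Definition \ref{words} is inherited,'' citing Lemmas \ref{one}, \ref{dobry}, \ref{w}. Those lemmas only give lengths, letter counts and the fact that $a$'s and $b$-letters alternate; alternation says nothing about subword complexity. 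Indeed a periodic word such as $(ab)^N$ has exactly the same alternation pattern and only one subword of each length, so no count of distinct subwords can follow from it. After applying $T$ each block collapses (up to a nonzero scalar) to $d\cdot ab$ or $d\cdot a(b^2-b)$, i.e.\ the blocked word records only the $b$-letters at positions divisible by $m_{n+1}$ of the sequence built from $W_{k+1}$; a lower bound on its number of distinct aligned factors is a real combinatorial assertion about this sampled sequence, and it is exactly the point where all the work lies. As written, the proposal asserts it rather than proves it.

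The paper avoids having to prove any complexity lower bound by running the argument in the opposite logical direction: from $T(w_{k+1}b)\in P(M^{\beta})$, Lemma \ref{naj} (contrapositive) gives that \emph{one} half, say $u'=T(\bar w_k b)$, has fewer than $n'=8(n+2)^4m_{n+1}$ subwords of length $n'$; Lemma \ref{lothaire} then forces $u'=cdd\cdots d$ with $d$ of block-length $n'!$, and this near-periodicity is contradicted by the mirror-complementation structure of $\bar w_k b=w_{k-1}(b^2-b)\bar w_{k-1}b$ (the letters $b$ versus $b^2-b$ at block-final positions symmetric about the centre are swapped, and $T'$ preserves the final letter of each block, so matching $T(p)=T(q)=T(r)$ up to scalars forces $b_\xi=\bar b_\xi$, impossible). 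Your ``main obstacle'' paragraph does gesture at precisely this $cdd\cdots d$ argument from Theorem \ref{ia}, which is the correct repair; but note that it enters your plan as the proof of the non-periodicity needed to apply Lemma \ref{lothaire1} in contrapositive for \emph{each} half, so once it is carried out your route collapses into the paper's argument rather than bypassing it. To make the proposal a proof you must actually transport that complementation argument through $T$ at the block level (checking that $m_{n+1}$, a power of two, is compatible with the centres of the words and that the relevant half has block-length exceeding roughly $n'!\cdot 10\cdot m_{n+1}$, which uses $2^{2^{m_{n+1}}}$ being enormous compared with $n$ and $m_{n+1}$, not the hypothesis $2^{m_1}>17!\cdot 10$, which is only needed for the base case); none of this is done in the proposal.
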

\begin{proof} 
 We proceed by induction on $n$. For $n=1$ the result holds by Theorem \ref{ia}. Suppose now that 
$w_{j}b\notin \sum_{i=1}^{n}E_{i}$ for any $j$. 
 We need to show that $w_{j}b\notin \sum_{i=1}^{n+1}E_{i}$ for all $j$. 
 Suppose on the contrary that $w_{t}b\in \sum_{i=1}^{n+1}E_{i}$ for some $t$, then by the definition of words $w_{i}$, 
 $w_{t+j}b\in \sum_{i=1}^{n+1}E_{i}$ for every $j\geq 0$.

 Let notation be as in Lemma \ref{ostatnia}, then $w_{k}b=v_{1}\cdots v_{\beta}$ for some $v_{1}, \ldots , v_{\beta }\in R(m_{n+1})$, moreover each $v_{i}$ is an $S$- monomial. 
  By Lemma \ref{ostatnia} 
  we know that $T(w_{k+1}b)\in P(M^{\beta })$.  

 Let $c'_{1}, c'_{2}, \ldots , c'_{j}\in \{v_{1}, \ldots , v_{\beta }\}$  be distinct $S$-monomials such that elements 
 $T'(c'_{1}),$ $ T'(c'_{2}), \ldots T'(c'_{j})$ form a basis of the linear 
space $F\cdot T'(v_{1})+\ldots + F\cdot T'(v_{\beta })$.  By Lemma \ref{ostatni}, for every $T(v_{k})=T(c'_{i_{k}})\alpha _{k}$ for some $i_{k}$ and some $0\neq \alpha_{k} \in F$. Denote $T(c'_{i})=c_{i}$, for every $i$.  Notice that $T(w_{k+1}b)= w'\cdot \alpha $ where $w'=c_{i_{1}}\cdots c_{i_{\beta }}$
 and $\alpha =\alpha _{1}\cdots \alpha _{\beta}$.
 It follows that $ c_{i_{1}}\cdots c_{i_{\beta}}\in P(M^{\beta })$. 

 Denote \[v=T'(v_{1})\ldots T'(v_{\beta /2 }), v'=T'(v_{\beta /2+1})\ldots T'(v_{\beta })\] 
and  \[u=T'(c'_{i_{1}})\ldots T'(c'_{i_{\beta /2 }}), u'=T'(c'_{i_{\beta /2+1}})\ldots T'(c'_{i_{\beta }}),\] then $ w'=c_{i_{1}}\cdots c_{i_{\beta }}=uu'$ and $u,u'\in R(2^{2^{m_{n+1}}}/2)=R(m_{n+1})^{\beta /2}$. 
 Moreover there are $0\neq \beta  ', \beta ''\in F$ such that $v=\beta '' \cdot  u$ and $v'= \beta '\cdot u'$.

 Notice that $u=c_{i_{1}}\ldots c_{i_{\beta /2}}$ and $u'=c_{i_{\beta /2+1}}c_{i_{\beta }}.$
By Lemma \ref{naj}   we get that 
for  $n'\geq 8d^{3}\cdot (\alpha +1)$ either $u$ or $u'$ has less than $n'$ subwords of length $n'$; without restraining generality we can assume that it happens for $u'$.
 In our case  $\alpha =(n+1)\cdot m_{n+1}$, $d=n+1$ because of assumptions on matrix $M=T(X_{n+1}^{m_{n+1}})$ (before Lemma \ref{naj}), so we can take   $n'=8(n+2)^{4}m_{n+1}$, so $v'$ has less than $n'$ subwords of degree $n'$.
  By Lemma \ref{lothaire} we get that  \[u'=cddd\ldots d\] for some words $c,d$ such that  
$c$ has length smaller than $n'!+n'!$ and $d$ has length $n'!$, so $d\in R(n'!\cdot m_{n+1})$ and $c\in R(l\cdot m_{n+1})$ for some $l<n'!+n'!$.

 It follows that 
\[v'=\beta ' \cdot cddd\ldots d.\]


 We know that $(v_{1}\ldots v_{\beta /2})\cdot (v_{\beta /2+1}\ldots v_{\beta})=w_{k+1}b=(w_{k}b)\cdot (\bar {w}_{k}b)$ therefore $v'=T(\bar {w}_{k}b)$.
  Observe now that $v'=T(\bar {w}_{k}b)$ implies \[v'=T(w_{k-1}(b^{2}-b)\bar {w}_{k-1}b).\]
 By Lemma \ref{dobry} we can write $w_{k-1}=ab_{2^{k-2}-1}\cdot b_{2^{k-2}-2}a\cdots ab_{2}\cdot ab_{1}a$ where $b_{i}\in \{b, b^{2}-b\}$.
 Then $v'=T(w_{k-1}(b^{2}-b)\bar {w}_{k-1}b)$ gives 
\[v'=T(ab_{2^{k-2}-1}\cdot ab_{2^{k-2}-2}\cdots ab_{2}\cdot ab_{1}\cdot a(b^{2}-b))(a\bar {b}_{1}\cdot a\bar {b}_{2}\cdots a\bar {b}_{2^{k-2}-1}\cdot ab)),\] where $\bar {b}=b^{2}-b$ and $\bar {b^{2}-b}=b$.

Recall that  $v'=  \beta '\cdot cddd\ldots d$ for some words $c,d$ such that  
$c$ has length smaller than $n'!+n'!$ and $d$ has length $n'!$ where $n'=8(n+2)^{4}m_{n+1}$.
By the assumptions  $v'\in R(2^{2^{m_{n+1}}}/2)=R(m_{n+1})^{\beta /2}$. 
Observe that $\beta /2> n'!\cdot 10\cdot m_{n+1}$.  Therefore we can write
\[\bar {w}_{k-1}(b^{2}-b){w_{k-1}}b=s\cdot p\cdot q\cdot r\cdot t\] for some $s,t\in R$ and some $p, q,r\in R(n'!\cdot m_{n+1})$ with
 $s\cdot p\cdot q=w_{k-1}(b^{2}-b)$. 

Notice that then $s\in R(q)$ for some $q>n'!\cdot 2$.
 Then \[v'=T({w}_{k-1}(b^{2}-b){\bar {w}_{k-1}}b)=T(s\cdot p\cdot q\cdot r \cdot t).\]

Notice that $spq=w_{k-1}(b^{2}-b)$ implies $spq\in R(\gamma )$ for some $\gamma $ divisible by $m_{n+1}$, as $m_{n+1}$ is a power of two and $w_{k-1}(b^{2}-b)\in R(2^{2^{m_{n+1}}}/4).$
 By the definition of $T$, 
 $v'=T(spq)T(qr)$. Because $p, q,r\in R(m!\cdot m_{j+1})$ we get 
\[v'=T(s)T(p)T(q)T(r)T(t).\]
 
 Then because $v'=cddd\ldots \cdot \beta '$ and $d$ has length $n'!$ we get $\gamma \cdot T(p)= \gamma ' \cdot T(q) =
 \gamma '' \cdot T(r) =m$ for some $0\neq \gamma , \gamma ' , \gamma '' \in F$ and some $m$ (where $m$ is a product of some  elements 
$c_{i_{1}},\ldots , c_{i_{\beta}})$.
 Denote $\xi =n'!\cdot m_{j+1}$, then $q=ab_{\xi -1}a\ldots b_{2}ab_{1}a(b^{2}-b)$, $r=a\bar {b}_{1}a\bar {b}_{2}a\ldots a\bar {b}_{\xi}$, and $p=p'ab_{\xi}$,  for some $p'$ (for some $b_{i}\in \{b, b^{2}-b\}$ where $\bar {b}=b^{2}-b$ and
 $\bar {b^{2}-b}=b$).  Recall that  $\gamma \cdot T(p)=\gamma '' \cdot T(r)$.
 Recall also that by Lemma \ref{t} we have 
 $T(p)=s\cdot ab_{\xi }$ and $T(r)=s'\cdot a \bar {b}_{\xi}$ for some $s,s'$
; it follows that 
$\bar {b}_{\xi }=b_{\xi}$, which is impossible, as $b_{\xi}\in \{b, b^{2}-b\}$ and  $\bar {b}=b^{2}-b$ and $\bar {b^{2}-b}=b$. We have obtained a contradiction. 

\end{proof}
\begin{theorem}\label{cesareo123}
 There is a nil ring $R$ such that the adjoint group of $R^{o}$ is not an Engel group.
 Moreover $R$  can be taken to be an algebra over an arbitrary countable field.
\end{theorem}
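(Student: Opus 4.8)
The plan is to assemble the constructions of the foregoing sections into a single nil algebra whose adjoint group already fails the Engel identity on the pair $1+a,\ 1+b$.

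First I fix a strictly increasing sequence $0<m_1<m_2<\cdots$ of powers of two with $2^{2^{m_i}}<m_{i+1}$ and $2^{2^{m_i}}\mid m_{i+1}$ for all $i$, and with $2^{m_1}>17!\cdot 10$; I take $F$ to be a countably infinite field, $R=F[a,b]$ the free non-unital $F$-algebra, and $S$ the $F$-span of $ab$ and $ab^2$, so that $R,S$ satisfy Assumption~$1$. Applying \thmref{id} I obtain an ideal $I$ of $R$ with $R/I$ nil and
\[
I\ \subseteq\ \sum_{i\ge 1}\bigl(E_i+bE_i+b^2E_i\bigr)+\langle a^2\rangle+\langle b^3\rangle .
\]
Set $\widetilde R=R/I$, a nil $F$-algebra. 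Since the generating sets of the ideals $I_k$ of \thmref{id} are homogeneous, each contained in some $F\cdot S^{\,j}$, I may take them as the sets $F_1,F_2,\dots$ of \lemref{wazny}; this identifies $\widetilde R$ with $\overline R/I'$ in the notation of that lemma, where $\overline R=R/\langle a^2,b^3\rangle$. As $\widetilde R$ is nil it is a Jacobson radical ring, so $\widetilde R^{\,o}=1+\widetilde R$ is a group and $1+a,\ 1+b\in\widetilde R^{\,o}$.

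Now suppose, for contradiction, that $\widetilde R^{\,o}$ is an Engel group. Applying the Engel condition to the pair $(1+a,1+b)$ produces an $n$ with $v_n=1$ in $\widetilde R^{\,o}$, where $v_1=[1+a,1+b]$ and $v_{k+1}=[v_k,1+b]$ as in Definition~\ref{zi}; equivalently $v_n-1\in I'$, and since $v_n=1$ forces $v_{n+1}=1$ we may assume $n>1$. By \lemref{wazny}, $w_{n+1}b\in I$, where $w_{n+1}=W_{n+1}(a,b,-a,b^2-b)$. By \lemref{important}, $w_{n+1}b\in F\cdot S^{\,2^{n}}$ (here we use $ab\in S$). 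Let $\Phi\colon R\to F\cdot S^{\,2^{n}}$ be the $F$-linear projection fixing every degree-$2^{n}$ $S$-monomial and annihilating every other monomial in $a,b$. Then $\Phi$ kills $bE_i$ and $b^2E_i$ (whose monomials begin with $b$), and $\langle a^2\rangle$ and $\langle b^3\rangle$ (whose monomials contain $aa$, resp.\ $bbb$, which $S$-monomials avoid); it sends each $E_i$ into itself, because a product of $S$-monomials is an $S$-monomial and the defining sum for $E_i$ already absorbs the truncation; and $\Phi(E_i)=0$ for all but finitely many $i$, since a nonzero $\Phi$-value on $E_i$ forces $2^{2^{m_i}}\le 2^{n}-2$, while $2^{2^{m_i}}\to\infty$. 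Because $w_{n+1}b$ is a single element of $I$, hence of a finite subsum, applying $\Phi$ gives
\[
w_{n+1}b=\Phi(w_{n+1}b)\in\sum_{i=1}^{N}E_i
\]
for some $N$, contradicting \thmref{ostatnia}. Hence $\widetilde R^{\,o}$ is not an Engel group, and $\widetilde R$ is the required nil algebra over the countably infinite field $F$.

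It remains to treat a finite field $F$. Here I run the construction above over the countably infinite field $F(t)$, obtaining a nil $F(t)$-algebra $\widetilde R$ with non-Engel adjoint group, and then pass to the $F$-subalgebra $\widetilde R_0$ of $\widetilde R$ generated by the images of $a$ and $b$; it is again nil and contains $1+a,1+b$, and since each $v_n$ equals $1$ plus a polynomial in $a,b$ with integer coefficients, the inequalities $v_n\ne 1$ descend from $\widetilde R$ to $\widetilde R_0$, so $\widetilde R_0^{\,o}$ is again not Engel. I expect the genuine difficulty to lie not in this assembly but in the inputs it invokes --- above all \thmref{ostatnia}, that $w_tb$ misses every finite partial sum $\sum_{i=1}^{n}E_i$ --- together with, inside the assembly itself, the passage from the infinite containment of \thmref{id} to one such finite partial sum, via the homogeneity of $w_{n+1}b$ and the projection $\Phi$; checking that the words $W_i$ of Definition~\ref{words} remain aperiodic after all the scalar reductions carried by the maps $f$ and $T$ is the delicate point.
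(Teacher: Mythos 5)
Your proposal is correct and follows essentially the same route as the paper: Theorem~\ref{id} supplies the ideal $I$ with $R/I$ nil, Lemma~\ref{wazny} converts a hypothetical Engel identity on $(1+a,1+b)$ into $w_{n+1}b\in I$, and Theorem~\ref{ostatnia} gives the contradiction, with the finite-field case handled by descending to the subalgebra generated by $a,b$ over a countably infinite extension. The only cosmetic differences are that you extract $w_{n+1}b\in\sum_{i=1}^{N}E_i$ from the containment of Theorem~\ref{id} via an explicit monomial projection $\Phi$ where the paper intersects $bR+\langle a^2\rangle+\langle b^3\rangle$ with the span of $S$-monomials, and that you use $F(t)$ rather than the algebraic closure for finite $F$; both variants are sound.
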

\begin{proof} Suppose first that $F$ is an infinite field.
Let $0<m_{1}< m_{2}< \ldots $ be such that each $m_{i}$ is a power of two, $2^{2^{m_{i}}}<m_{i+1}$ and $2^{2^{m_{i}}}$ divides $m_{i+1}$ for every $i$. Assume moreover that $2^{m_{1}}>17!\cdot 10$. 
Let $R$, $I$ satisfy assumptions of Theorem \ref{id} for these $m_{i}$.
 By Theorem \ref{ostatnia} we have  $w_{n}b\notin \sum_{i=1}^{j}E_{i}$ for any $n,j$,  where $w_{n}=W_{n}(a, b, -a, b^{2}-b)$.
 By Theorem \ref{id} we see that $I\subseteq \sum_{i=1}^{\infty }E_{i}+bE_{i} +b^{2}E_{i}+<a^{2}>+<b^{3}>.$
 Suppose that $w_{n}b\in I$ for some $n$, then there is $w\in \sum_{i=1}^{\infty }E_{i}$ such that 
 $w_{n}b- w\in bR+<a^{2}>+<b^{3}>$. We can assume that $ w\in \sum_{i=1}^{\infty }R(i)+R(i)a$ where $R_{i}=S^{i}$ and $S=F\cdot ab+F\cdot ab^{2}$.  
Observe that $(bR+<a^{2}>+<b^{3}>)\cap  (\sum_{i=1}^{\infty }R(i)+R(i)a)=0$ and so
$w_{n}b-w=0$ so $w_{n}b\in  \sum_{i=1}^{\infty }E_{i}$, a contradiction. It follows that $w_{n}b\notin I$ for every $n$.

 By  Lemma \ref{wazny} we have $v_{n}-1\notin I'$ for every $n$, and hence $(R/I)^{\circ }$ is not an Engel group ($I'$ is as in Lemma \ref{wazny}).  
 By Theorem \ref{id}, $R/I$ is nil. Therefore $R/I$ is a nil algebra over field $F$ such that the adjoint group 
$R/I^{o}$ of this algebra is not an Engel group. 

 If $F$ is a finite field then we proceed in the following way: 
 Let $\bar {F}$ be the algebraic closure of $F$, then $\bar {F}$ is infinite. Hence there is a nil algebra $A$ over $\bar {F}$ such that the adjoint group $A^{o}$ is not nil. Let $x,y\in A$ be such that $[x[\ldots [x[x,y]]]] \neq  1 $ ($n$ copies of $x$)
 for every $n$. Let $A'$ be the smallest  subring of $A$ containing  $x,y$ and such that  such that if $r\in A'$ then $f\cdot r\in A'$ 
for every $f\in F$. Then $A'$ is an $F$-algebra which is generated as $F$-algebra by $x,y$ and which is nil. Since $x,y\in A'$ then the adjoint group $A'^{o}$ is not an Engel group. 
\end{proof}
$ $

{\bf Proof of Corollary \ref{braces}.} Let $(R, +,\cdot )$ be a ring such that $R$ is nil and $R^{o}$ is not an Engel group. Assume moreover that $R$ is an algebra over a finite field of cardinality $p$ for some prime number $p$.  Let $(R, +,o)$ be the associated brace, so $a\circ b=a\cdot b+a+b$ for all $a,b\in R$, and the addition is the same as in the ring $R$. It follows that  
 $(R, +,o)$ satisfies the thesis of Corollary \ref{braces}.

\section{Zelmanov's question}\label{k}

 Observe that in the case of algebras over uncountable fields we have the following result analogous to Lemma \ref{pierwszy}.

\begin{lemma}\label{aggi} Let $F$ be an uncountable field and let $F'$ be a countable subfield of $F$.  
 Let $R$ be an $F$-algebra generated by elements $a,b$, and suppose that $a^{2}=0$ and $b^{3}=0$.
  Let $R[x_{1}, x_{2}, \ldots ]$ be the polynomial ring over $R$ in  infinitely many commuting variables $x_{1}, x_{2}, \ldots $.
Let  $F'[x_{1}, x_{2}, \ldots ]$ be the polynomial ring over $F'$ in  infinitely many commuting variables $x_{1}, x_{2}, \ldots $.
 
Let $S'$ be the $F'$-linear space spanned by elements $abx_{i}$ and  $ab^{2}x_{i}$ for $0\leq i$.
 
If all finite  matrices  with entries from $S'$ are nilpotent then $R$ is a Jacobson radical  ring. 
\end{lemma}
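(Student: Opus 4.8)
The plan is to deduce the statement from Lemma~\ref{pierwszy} applied with $n=3$. Since $a^{2}=0$ and $b^{3}=0$, the linear space $S$ occurring in Lemma~\ref{pierwszy} for $n=3$ is exactly $\mathrm{span}_{F}(ab,ab^{2})$, and that lemma tells us that $R$ is a Jacobson radical ring as soon as \emph{every finite matrix with entries in $\mathrm{span}_{F}(ab,ab^{2})$ is nilpotent}. So the whole task reduces to deriving this condition on matrices over $R$ from the hypothesis on matrices over $S'$; this passage from $S'$ to $\mathrm{span}_{F}(ab,ab^{2})$ is the heart of the argument.

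To carry it out I would use a generic matrix together with a specialisation of the variables. Let $M$ be a finite, say $n\times n$, matrix with entries in $\mathrm{span}_{F}(ab,ab^{2})$, and write its $(k,l)$-entry as $\alpha_{kl}ab+\beta_{kl}ab^{2}$ with $\alpha_{kl},\beta_{kl}\in F$. Choose $2n^{2}$ pairwise distinct variables among $x_{1},x_{2},\ldots$, rename them $y_{kl},z_{kl}$ ($1\le k,l\le n$), and let $\widehat{M}$ be the $n\times n$ matrix over $R[x_{1},x_{2},\ldots]$ whose $(k,l)$-entry is $ab\,y_{kl}+ab^{2}z_{kl}$. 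Each such entry is the sum of the two elements $ab\,y_{kl}$ and $ab^{2}z_{kl}$, both of which lie in $S'$, so $\widehat{M}$ is a finite matrix with entries in $S'$, and hence $\widehat{M}^{N}=0$ for some $N$ by hypothesis. Now the rule $y_{kl}\mapsto\alpha_{kl}$, $z_{kl}\mapsto\beta_{kl}$, and $x_{i}\mapsto 0$ for every remaining $i$, extends to a ring homomorphism $R[x_{1},x_{2},\ldots]\to R$ --- this is legitimate because in $R[x_{1},x_{2},\ldots]$ the variables are central and are being sent to elements of $F$, which lie in the centre of $R$ --- and therefore, applied entrywise, to a ring homomorphism $M_{n}(R[x_{1},x_{2},\ldots])\to M_{n}(R)$ which carries $\widehat{M}$ to $M$ (using that the scalars $\alpha_{kl},\beta_{kl}$ are central). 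Applying it to $\widehat{M}^{N}=0$ gives $M^{N}=0$. Thus every finite matrix over $\mathrm{span}_{F}(ab,ab^{2})$ is nilpotent, and Lemma~\ref{pierwszy} finishes the proof.

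The one step demanding care is the reduction in the second paragraph: one must check that the specialisation really is a ring homomorphism, which rests on the variables $x_{i}$ being central in $R[x_{1},x_{2},\ldots]$, so that it is compatible with matrix multiplication and hence with taking $N$-th powers. I should stress that neither the uncountability of $F$ nor the subfield $F'$ plays any role in this proof; they matter only for the application of the lemma, where ``all finite matrices over $S'$ are nilpotent'' --- finitely many matrices over the countable field $F'$ in finitely many of the countably many variables $x_{i}$ --- is the condition one can actually establish by a countable induction, whereas ``all finite matrices over $\mathrm{span}_{F}(ab,ab^{2})$ are nilpotent'' would range over uncountably many matrices. An alternative would be to repeat the proof of Lemma~\ref{pierwszy} verbatim, taking $R'$ to be the subring of $R$ generated by $\mathrm{span}_{F}(ab,ab^{2})$ (Jacobson radical by Theorem~$1.2$ of \cite{smok}) and re-running the extension to $R'+R'a+F[a]$, the right-ideal and two-sided-ideal steps, and the nilpotency of the corresponding quotient; but since every step is identical to that lemma, quoting it after the reduction above is shorter.
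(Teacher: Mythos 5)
Your proposal is correct and follows essentially the same route as the paper: the paper also specialises the variables $x_{i}$ to arbitrary scalars in $F$ to transfer nilpotence of matrices over $S'$ to nilpotence of all finite matrices over $\mathrm{span}_{F}(ab,ab^{2})$, and then invokes Lemma~\ref{pierwszy}. Your version merely spells out the generic-matrix construction and the verification that the specialisation is a ring homomorphism, which the paper leaves implicit.
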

\begin{proof}
 Observe that if all finite  matrices  with entries from $S$ are nilpotent, then after substituting arbitrary elements $\alpha _{1} , \alpha _{2} ,\ldots \in F$ for variables $x_{1}, x_{2}, \ldots $ we get that all matrices 
  with entries from $F'$-linear space spanned by elements $ab^{i}\alpha _{j}$ are nilpotent.
 Therefore every  matrix with entries in the $F$-linear space spanned by elements $ab$ and $ab^{2}$ is nilpotent.
 By Lemma \ref{pierwszy},  $R$ is a Jacobson radical $F$-algebra.
\end{proof}

We recall Amitsur's theorem.
\begin{theorem}\label{amit}
 Let $R$ be a finitely generated algebra over an uncountable field. If $R$ is a Jacobson radical algebra then $R$ is nil.
\end{theorem}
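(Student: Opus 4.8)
The plan is to run Amitsur's classical argument. The only ``size'' input needed is that $R$, being generated over $F$ by finitely many elements, is spanned over $F$ by the countably many words in those generators, so $\dim_F R\le\aleph_0$, while $|F|>\aleph_0$. Since $R=J(R)$, it suffices to show that every $a\in R$ is nilpotent. We may assume $R\ne 0$; then, being Jacobson radical, $R$ has no identity, so in the unital hull $R^{1}=R\oplus F\cdot 1$ we have $1\notin R$ and, for every $x\in R=J(R)$, the element $1+x$ is invertible in $R^{1}$. Fix $a\in R$ and split according to whether $a$ is algebraic or transcendental over $F$ (i.e.\ whether the $F$-algebra map $F[t]\to R^{1}$, $t\mapsto a$, has nonzero kernel).

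\textit{Algebraic case.} Choose a nonzero $p(t)\in F[t]$ of least degree with $p(a)=0$ in $R^{1}$ and factor $p(t)=t^{j}g(t)$ with $g(0)\ne0$. If $j=0$, then from $g(a)=0$ and $g(0)\ne0$ we get $a\cdot h(a)=-g(0)\cdot 1$ for some $h\in F[t]$; the left-hand side lies in $R$, so a nonzero scalar lies in $R$, contradicting $1\notin R$. Hence $j\ge 1$. Write $g(a)=g(0)(1+v)$ where $v$ is an $F$-linear combination of positive powers of $a$, so $v\in R=J(R)$ and $1+v$, hence $g(a)$, is a unit of $R^{1}$. Then $a^{j}g(a)=p(a)=0$, so $a^{j}=0$.

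\textit{Transcendental case.} For $\lambda\in F$ the element $\lambda a\in R$, so $1+\lambda a$ is invertible in $R^{1}$, and reducing mod $R$ shows $(1+\lambda a)^{-1}\in 1+R$; put $u_{\lambda}=(1+\lambda a)^{-1}-1\in R$. I claim $\{u_{\lambda}:\lambda\in F\setminus\{0\}\}$ is $F$-linearly independent; this forces $\dim_F R\ge|F|>\aleph_0$, contradicting the first paragraph, so this case cannot occur. To prove the claim, fix distinct nonzero $\lambda_1,\dots,\lambda_n$ and let $L=F[t,(1+\lambda_1 t)^{-1},\dots,(1+\lambda_n t)^{-1}]\subseteq F(t)$, the localization of $F[t]$ at the multiplicative set generated by the $1+\lambda_i t$. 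Since $t\mapsto a$ sends each $1+\lambda_i t$ to a unit of $R^{1}$, the universal property of localization gives an $F$-algebra homomorphism $\psi\colon L\to R^{1}$ with $\psi(t)=a$; because $a$ is transcendental, $\psi$ is injective (any element of $L$ has the form $f/s$ with $f\in F[t]$ and $\psi(s)$ a unit, so $\psi(f/s)=0$ forces $f(a)=0$, hence $f=0$). As $\psi\bigl((1+\lambda_i t)^{-1}-1\bigr)=u_{\lambda_i}$, a relation $\sum_i c_i u_{\lambda_i}=0$ in $R$ pushes back to $\sum_i c_i(1+\lambda_i t)^{-1}=\sum_i c_i$ in $F(t)$; multiplying by $\prod_j(1+\lambda_j t)$ and evaluating at $t=-1/\lambda_k$ annihilates every term except the $k$-th and yields $c_k=0$ for each $k$.

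\textit{Conclusion and the main obstacle.} In both cases $a$ is nilpotent, so $R$ is nil. The point demanding the most care is the transcendental case: the inverses $(1+\lambda a)^{-1}$ live in $R^{1}$ but are generally not polynomials in $a$, so one must work inside the (manifestly commutative) subalgebra generated by $a$ together with these inverses and exploit transcendence of $a$ to embed the appropriate localization of $F[t]$ into $R^{1}$; once this is in place, the linear independence reduces to the elementary fact that simple poles at distinct points cannot cancel. The countable-dimension estimate and the algebraic case are routine.
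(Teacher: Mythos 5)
Your proof is correct, and it is the standard argument for Amitsur's theorem: the algebraic case handled by writing $p(t)=t^{j}g(t)$ with $g(0)\neq 0$ and observing that $g(a)$ is a unit of $R^{1}$, and the transcendental case excluded because the quasi-inverse elements $(1+\lambda a)^{-1}-1\in R$, $\lambda\in F\setminus\{0\}$, are $F$-linearly independent (a partial-fractions argument), contradicting $\dim_F R\le\aleph_0<|F|$. There is nothing to compare against in the paper itself: Theorem~\ref{amit} is only recalled there as a known classical result and no proof is given, so your write-up supplies exactly the classical proof; the only cosmetic remark is that minimality of the polynomial $p$ in the algebraic case is never used.
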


We will now use Lemma \ref{aggi} to give  an analogon of Theorem \ref{id}.

Let $F$ be a field and let $F'$ be a countable subfield of $F$. Let $R$  be as in Lemma \ref{aggi} and let $S$ be 
the $F'$-linear space spanned by elements $ab$ and $ab^{2}$. Let $X=\{x_{1}, x_{2}, \ldots \}$ and let  $F'[X]$ denote the polynomial ring over $F'$ in infinitely many variables $x_{1}, x_{2}, \ldots $. 

 We can ennumerate all finite  matrices  with entries from $S\cdot F'[X]$ as 
   $X_{1}, X_{2}, \ldots $. We can assume that the matrix $X_{i}$  is a $d_{i}$-by-$d_{i}$ matrix where $d_{i}\leq i$  and $X_{i}$ has entries in
 $S\cdot y_{1}+S\cdot y_{2} +\ldots S\cdot y_{i}$ for some $y_{1}, y_{2}, \ldots , y_{i}\in F'[X]$  (if necessary taking $X_{i}=0$ for some $i$). The following  theorem has the same proof as Theorem  \ref{id}.
 
\begin{theorem}\label{id2} Let $F$ be an uncountable field,   and let $R, S$ and the  matrices $X_{1}, X_{2}, \ldots $ be as above.  Let $0<m_{1}<m_{2}< \ldots $ be a sequence of natural numbers such that $2^{2^{m_{i}}}$ divides $m_{i+1}$ for every $i\geq 1$. Denote $R(m)=F\cdot S^{m}$ for every $m$. 
Let $E'_{i}$ be the linear space spanned by all coefficients of polynomials which are entries of the matrix $X_{i}^{2^{2^{m_{i}}}}$ and  let
\[E_{i}=\sum_{j=0}^{\infty } R(j\cdot m_{i+1})E'_{i}SR.\]
  Then there is an ideal $I$ in $R$ contained in $\sum_{i=1}^{\infty }E_{i}+bE_{i} + b^{2}E_{i}+<a^{2}>+<b^{3}>$   and such that 
$R/I$ is a nil ring, where $<a^{2}>, <b^{3}>$ denote ideals in $R$ generated by elements $a^{2}$ and $b^{3}$.  
\end{theorem}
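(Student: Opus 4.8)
The statement is established exactly as Theorem~\ref{id}; I describe the plan and the single place where the argument is altered to accommodate the uncountable field.

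First I would, for each $k\ge 1$, let $I_{k}$ denote the two-sided ideal of $R$ generated by all coefficients (with respect to the commuting variables $x_{1},x_{2},\dots$) of the polynomials that occur as entries of the matrix $X_{k}^{2m_{k+1}+2}$. The key observation, just as in the proof of Theorem~\ref{id}, is that each entry of $X_{k}$ lies in $S\cdot F'[X]$ and so has degree one in the grading of the subalgebra generated by $S$ in which every element of $S$ has degree one; hence for $n>m_{k+1}+2^{2^{m_{k}}}+1$ every coefficient of an entry of $X_{k}^{n}$ lies in $R(i)\,E'_{k}\,R(1)\,R$ for every $0\le i<n-m_{k+1}-1$. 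Taking $i$ to be a multiple of $m_{k+1}$ and using $a^{2}=0$ and $b^{3}=0$, one obtains, exactly as in Theorem~\ref{id}, that
\[
I_{k}\subseteq E_{k}+bE_{k}+b^{2}E_{k}.
\]
I would then put $I=\sum_{k\ge 1}I_{k}+\langle a^{2}\rangle+\langle b^{3}\rangle$; this yields
\[
I\subseteq\sum_{i\ge 1}\bigl(E_{i}+bE_{i}+b^{2}E_{i}\bigr)+\langle a^{2}\rangle+\langle b^{3}\rangle,
\]
which is the required containment.

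It remains to prove that $R/I$ is a nil ring, and this is the one step that differs from Theorem~\ref{id}: there one passes to a polynomial ring in a single variable and invokes Amitsur's theorem via Lemma~\ref{trzeci}, whereas here I would use Lemma~\ref{aggi} together with Theorem~\ref{amit}. Since $\langle a^{2}\rangle,\langle b^{3}\rangle\subseteq I$, in $R/I$ one has $a^{2}=0$ and $b^{3}=0$. Let $M$ be an arbitrary finite matrix with entries in $S'$, the $F'$-linear span of the elements $abx_{i}$ and $ab^{2}x_{i}$. As $S'\subseteq S\cdot F'[X]$, the matrix $M$ appears, after a suitable reindexing, in the list $X_{1},X_{2},\dots$, say $M=X_{k}$; by construction every coefficient of every entry of $M^{2m_{k+1}+2}=X_{k}^{2m_{k+1}+2}$ lies in $I_{k}\subseteq I$, so $M$ is nilpotent in $R/I$. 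Thus every finite matrix over $S'$ is nilpotent in $R/I$, and Lemma~\ref{aggi} shows that $R/I$ is a Jacobson radical $F$-algebra. Finally, $R/I$ is finitely generated, by the images of $a$ and $b$, over the uncountable field $F$, so Theorem~\ref{amit} gives that $R/I$ is nil, which completes the proof.

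The main obstacle is the degree-counting in the first step, namely checking that the generators of each $I_{k}$ lie in $E_{k}+bE_{k}+b^{2}E_{k}$; this is routine and is carried out verbatim as in Theorem~\ref{id}. The only genuinely new ingredient is the final reduction from ``Jacobson radical'' to ``nil'', where the uncountability of $F$ enters through Lemma~\ref{aggi} and Amitsur's theorem in place of the one-variable argument used in the countable case.
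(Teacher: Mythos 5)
Your proposal is correct and follows essentially the same route as the paper: the same definition of $I_{k}$ via coefficients of a high power of $X_{k}$, the same degree-counting to get $I_{k}\subseteq E_{k}+bE_{k}+b^{2}E_{k}$, and the same final appeal to Lemma~\ref{aggi} together with Amitsur's theorem (Theorem~\ref{amit}) for finitely generated algebras over an uncountable field. The only difference is that you spell out explicitly why every finite matrix over $S'$ becomes nilpotent modulo $I$, a step the paper leaves implicit.
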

\begin{proof}  Observe first that the ideal $I_{k}$ of $R$ generated by coefficients of polynomials which are entries of the matrices  
$X_{k}^{2m_{k+1}}$  is contained in the subspace $E_{k}+bE_{k}+b^{2}E_{k}$.
  It follows because entries of every matrix $X_{k}$ have degree one in the subring generated by $S$ with elements of $S$ of degree one. In general if $n>m_{k+1}+2^{2^{m_{k}}}+1$ then  every entry of matrix
 $X_{k}^{n}$ belongs to $R(i)E'_{k}R(1)R$ for every $0\leq i<n-m_{k+1}-1$.
 
Define $I=\sum_{i=1}^{\infty }I_{k}+<a^{2}>+<b^{3}>$, then 
$I\subseteq \sum_{i=1}^{\infty }E_{i}+bE_{i} + b^{2}E_{i}+ 
<a^{2}>+<b^{3}>.$
 Observe also that, by Lemma \ref{aggi} and Theorem  \ref{amit}, $R/I$ is a nil ring.
\end{proof}

We will say that  $M, R, F', S, r_{1}, r_{2}, m, d,\alpha $ satisfy {\bf Assumption 3} if 
\begin{itemize}
\item[1.]   $R, F'$ are as in Lemma \ref{aggi} and  $S$ is 
the $F'$-linear space spanned by elements $ab$ and $ab^{2}$, and  $m, d, \alpha $ are natural numbers.
\item[2.]  $M$ is a $d$ by $d$ matrix with entries from $S^{m}\cdot F[X]$.
 Moreover, \[M\subseteq R+R\cdot y_{1}+R\cdot y_{2}+\ldots R\cdot y_{\alpha },\]
 for some $y_{1}, y_{2}, \ldots , y_{\alpha }\in  F[X],$ where $X=\{x_{1}, x_{2}, \ldots \}$ is an infinite set.
\end{itemize}

We now propose a generalisation of Lemma  \ref{naj}.

\begin{lemma}\label{najj} Let $F$ be an infinite field.  $M, R, S, m, d, \alpha $ satisfy Assumption $3$. Let $q$ be natural number.
 Let $c_{1}, \ldots , c_{k}$ be linearly independent  elements from $F\cdot S^{m}$, and let $r_{1}, r_{2}, \ldots , r_{\alpha +1}$ be products of 
$q$ elements from the set $C=\{c_{1}\ldots , c_{k}\}$. 
 If $n>d^{\alpha +2}\cdot (\alpha+1)^{\alpha +1}$ and for each $i$, $r_{i}$ has more than $n$ subwords of length $n$ then
 $r_{1}\cdot r_{2}\cdots r_{\alpha +1}\notin P(M^{j})$, for any $j$. 
 
We say that $w$ is a subword of degree $n$ in $r$, if $w$ is a product of $n$ elements from $C$, and $r=vwv'$ for some $v,v'$ which  are also products of elements from $C$.
\end{lemma}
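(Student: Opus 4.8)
The plan is to imitate the proof of Lemma~\ref{naj}, but with $\alpha+1$ factors $r_1,\dots,r_{\alpha+1}$ in place of the two factors $r,r'$, and to iterate the ``contracting'' mapping argument $\alpha$ times rather than once. Suppose for contradiction that $r_1r_2\cdots r_{\alpha+1}\in P(M^t)$ for some $t$. For each $i$, choose inside $r_i$ a family of pairwise distinct subwords $p_{i,1},\dots,p_{i,n}$ of length $n$; between consecutive factors there are connecting words $s_{i,k}^{(i')}$ (products of elements of $C$) so that, for any choice of indices $k_1,\dots,k_{\alpha+1}$, the product
\[
p_{1,k_1}\,s_1\,p_{2,k_2}\,s_2\,\cdots\,s_\alpha\,p_{\alpha+1,k_{\alpha+1}}
\]
is a subword of $r_1\cdots r_{\alpha+1}$, and hence (by Lemma~\ref{podslowa}) lies in $P(M^{N})$ for an appropriate exponent $N$ depending on the lengths. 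Pick a linear map $f\colon F\cdot S^m\to F$ as in Definition~\ref{wazna1} with $f(c_i)\neq 0$ for all $i$, so that $f(s_j)\neq 0$ for every connecting word. Applying Lemma~\ref{mily} once in each of the $\alpha$ ``gap'' positions, and then Lemma~\ref{piaty} once in each gap to replace $f(M^{\,\ast})$ by $\sum_{l=1}^{d+1}f(M)^l$, we obtain
\[
p_{1,k_1}\,p_{2,k_2}\,\cdots\,p_{\alpha+1,k_{\alpha+1}}
\ \in\ \sum_{l_1,\dots,l_\alpha=1}^{d+1} P\bigl(M^{n} f(M)^{l_1} M^{n} f(M)^{l_2}\cdots f(M)^{l_\alpha} M^{n}\bigr).
\]

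The dimension count then finishes the argument. Each entry of $f(M)$ is a polynomial in the $y_i$'s of the same multidegree shape as the entries of $M$ (which lie in $R+R y_1+\dots+R y_\alpha$), so each entry of the product matrix $M^{n}f(M)^{l_1}\cdots f(M)^{l_\alpha}M^{n}$ is a polynomial whose support, by the Vandermonde/specialisation argument used in Lemmas~\ref{mily} and \ref{piaty}, contributes only boundedly many coefficients: there are $d^2$ matrix positions, at most $(d+1)^\alpha$ choices of the $l_j$, and the number of monomials in the $y_i$ appearing is bounded by a polynomial in $n$ of degree $\alpha$ (the total degree being $O(n)$ with $\alpha$ variables involved). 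Thus the right-hand side spans an $F$-subspace of $F\cdot S^{m(\alpha+1)n}$ of dimension at most roughly $d^2(d+1)^\alpha\cdot (Cn)^\alpha$. On the other hand, since the $p_{i,j}$ are products of $n$ elements of the linearly independent set $C$ and are pairwise distinct within each $r_i$, the $(\alpha+1)$-fold products $p_{1,k_1}\cdots p_{\alpha+1,k_{\alpha+1}}$ are linearly independent over $F$, so they span a space of dimension at least $n^{\alpha+1}$. Hence
\[
n^{\alpha+1}\ \le\ d^2(d+1)^\alpha (Cn)^\alpha\ <\ d^{\alpha+2}(\alpha+1)^{\alpha+1}\, n^\alpha,
\]
contradicting $n>d^{\alpha+2}(\alpha+1)^{\alpha+1}$.

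The main obstacle, and the point that needs care, is the bookkeeping of the polynomial degrees in the $y_i$: in Lemma~\ref{naj} (the case $\alpha$ of a single variable $x$) the Vandermonde argument is transparent because one works with a single polynomial variable and $P(M^n)=\sum_{i,j,p\in F}F\cdot a_{i,j}(p)$; here $M$ involves $\alpha$ generators $y_1,\dots,y_\alpha\in F[X]$, so one must instead specialise these $\alpha$ quantities to field elements and check that the number of monomials (equivalently, the dimension of the span after specialisation) is still $O(n^\alpha)$ — this is where the exponent $\alpha+1$ versus $\alpha$ gap that defeats the inequality comes from. Once that degree bound is in hand, the rest is the same specialisation-by-linear-functionals technique already used repeatedly above (Lemmas~\ref{podslowa}, \ref{mily}, \ref{piaty}), applied $\alpha$ times in parallel across the $\alpha$ gaps, and the final step is the elementary inequality displayed above.
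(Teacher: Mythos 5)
Your proposal follows essentially the same route as the paper's own proof: decompose each $r_i$ into $n$ distinct subwords of length $n$, use Lemma~\ref{podslowa} to pass to the subword $p_{1,k_1}s_1\cdots s_\alpha p_{\alpha+1,k_{\alpha+1}}$, apply Lemma~\ref{mily} and the Lemma~\ref{piaty}-type reduction once in each of the $\alpha$ gaps, and then compare the dimension bound $d^2(d+1)^\alpha\cdot O(n^\alpha)$ (coming from the $d^2$ entries, the $(d+1)^\alpha$ choices of exponents, and the $O(n^\alpha)$ monomials in $y_1,\dots,y_\alpha$) with the $n^{\alpha+1}$ linearly independent products $p_{1,k_1}\cdots p_{\alpha+1,k_{\alpha+1}}$. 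The treatment of the numerical constants is about as loose as in the paper itself, so the argument is correct to the same standard and no further comparison is needed.
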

\begin{proof} Denote $r=r_{1}\cdots r_{\alpha +1}$. Suppose on the contrary that $r\in P(M^{j})$. 
Let $p_{1, i}, \ldots , p_{n, i}$ be subwords of degree $n$ in $r_{i}$ for all $i\leq \alpha +1$.
  Fix numbers $\beta (1), \ldots , \beta (\alpha +1)\leq \alpha +1$, then there are $q_{1}, \ldots , q_{\alpha }$ such that 
\[s=(\prod_{i=1}^{\alpha }p_{\beta (i), i}q_{i} )p_{\beta (\alpha +1),\alpha +1}\] is a subword of $r$. 
 
 By Lemma \ref{podslowa}, $r\in P(M^{j})$ implies $s\in P(M^{j'})$, for some $j'$.
 
Let $f:F\cdot S^{m}\rightarrow F$ be a linear map 
 such that $f(c_{i})\neq 0$, and let $f:F\cdot S^{m\cdot n'}\rightarrow F$ be 
as in Definition \ref{wazna1}. Then  $f(c_{i_{1}}c_{i_{2}}\ldots c_{i_{n'}})=f(c_{1})\ldots f(c_{n'})\neq 0$ for every
 choice of $i_{1}, i_{2}, \ldots , i_{n'}\leq k$. 
By  Lemma \ref{mily} applied several times we get that
      
\[(\prod_{i=1}^{\alpha }p_{\beta (i), i}f(q_{i}) )p_{\beta (\alpha +1),\alpha +1}\in P((\prod_{i=1}^{\alpha } M^{n}f(M^{\deg q_{i}})M^{n}).\] By  an analogous argument to Lemma \ref{piaty}, 

\[p_{\beta (1),1}p_{\beta (2),2}\cdots p_{\beta (\alpha +1), \alpha +1}\in P((\prod_{i=1}^{\alpha } M^{n}Q_{i})M^{n}), \]
 where $Q_{i}=\sum_{i=1}^{d+1} F\cdot  f(M^{i})$.

Notice that the linear space $P((\prod_{i=1}^{\alpha } M^{n}Q_{i})M^{n})$
 has dimension at most $d^{\alpha }\cdot d^{2}\cdot ((\alpha+1)\cdot n)^{\alpha +1}$.

Observe that since each $p_{i, j}$ is a product of $n$ elements from the set $C$, then
 elements 
\[p_{\beta (1),1}p_{\beta (2),2}\cdots p_{\beta (\alpha +1), \alpha +1}\] span  linear space over field $F$ of dimension at least $n^{\alpha +1}$. Hence $d^{\alpha }\cdot d^{2}\cdot ((\alpha+1)\cdot n)^{\alpha }<n^{\alpha +1}$,
  a contradiction with the assumptions on $n$.
\end{proof}

{\bf Proof of Theorem \ref{cesareo}.} 
 We first obtain an analogon of Theorem \ref{ostatnia} by using Assumption $3$ instead of Assumption $2$ and by using  Theorem \ref{id}  instead of Theorem \ref{id2}, and by using Lemma \ref{najj} instead of Lemma \ref{naj}. Moreover, we 
  need to use  a stronger assumption that  $2^{2^{2^{2^{m_{i}}}}}$ divides $m_{i+1}$ (instead of the assumption that $2^{2^m_{i}} $ divides $m_{i+1}$). Once  we have obtained an analogon of Theorem \ref{ostatnia}, we can prove Theorem \ref{cesareo}  using the same proof as the proof of Theorem \ref{cesareo123}  where  instead of  Lemma \ref{naj} we use Lemma \ref{najj}. 

\section*{Acknowledgements} 

I would like to thank Efim Zelmanov and Gunnar Traustason for their helpful  comments on Engel groups, and  Tatyana Gateva-Ivanova for introducing me to research into braided groups, braces  and solutions to the Yang-Baxter equation.
 I am also grateful to Jan Krempa and Jan Okni{\' n}ski for their helpful suggestions. I am very grateful to Ferran Ced{\' o} and to the unknown referee for reading the manuscript and for their very useful comments.

\bibliographystyle{amsplain}

\begin{thebibliography}{99}
\bibitem{a} Alireza Abdollahi, {\em Engel elements in groups}, in Groups St Andrews 2009 in Bath,  London Math. Soc. Lecture Note Ser. 387,  Volume 1, Cambridge University Press, 2011, 94-117.
\bibitem{jain} Adel Alahmadi, Hamed Alsulami, S.K. Jain, 
Efim Zelmanov, {\em Finite generation of Lie algebras associated
with associative algebras}, J.  Algebra {\bf 426} (2015), 69--78.
\bibitem{ads} B. Amberg, O. Dickenschied, Ya.P. Sysak,  {\em Subgroups of the adjoint group of a radical ring}, Canad. J. Math. {\bf 50} (1998), 3--15.
\bibitem{4} B. Amberg, L. Kazarin, {\em Nilpotent p-algebras and factorized p-groups}, in  Groups St Andrews 2005. 
 London Math. Soc. Lecture Note Ser. 339, Volume 1, Cambridge University Press, 2007, 130--147.
\bibitem{6} B. Amberg, Ya.P. Sysak, {\em Radical rings and products of groups}, in  Groups St Andrews 1997 in Bath. London Math. Soc. Lecture Note Ser. 260,  Cambridge University Press, 2007,  1--19.
\bibitem{3} \bysame {\em  Radical rings with soluble adjoint group}, J. Algebra {\bf 247} (2002), 692-702. 
\bibitem{1} \bysame {\em Radical Rings with Engel Conditions}, J. Algebra {\bf 231} (2000), 36--373.
\bibitem{aw} J.C. Ault, J.F. Watters, {\em Circle groups of nilpotent rings},  Am. Math. Mon. {\bf 80} (1973), 48--52.
\bibitem{db} David Bachiller, {\em Counterexample to a conjecture about braces}, J. Algebra {\bf 453} (2016), 160--176.
\bibitem{dbs} \bysame {\em Extensions, matched products, and simple braces}, arXiv:1511.08477v3 [math.GR], 13 June 2016.
\bibitem{bc} David Bachiller, Ferran Ced{\' o}, {\em A family of solutions of the Yang-Baxter equation}, J. Algebra {\bf 412} (2014), 218--229.
\bibitem{bcj} David Bachiller, Ferran  Ced{\' o}, Eric Jespers, {\em Solutions of the Yang-Baxter equation associated with a left brace}, J. Algebra {\bf 463}  (2016), 80--102.
\bibitem{simple} D. Bachiller, F. Ced{\' o}, E. Jespers, J. Okni{\' n}ski, {\em Iterated matched products of finite braces and simplicity; new solutions of the Yang-Baxter equation}, 
 arXiv:1610.00477v1 [math.GR], 3 October 2016.
\bibitem{5}  F. Catino, M. M. Miccolia,  Ya. P. Sysak, {\em On the Adjoint Group of Semiprime Rings}, Comm. Alg. {\bf 35} (2006), 265--270. 
\bibitem{cr} F. Catino, R. Rizzo, {\em Regular subgroups of the affine group and radical
circle algebras}, Bull. Aust. Math. Soc. {\bf 79} (2009), 103--107.
\bibitem{cedo} Ferran Ced{\' o}, {\em  Braces and the Yang-Baxter equation},
(joint work with Eric Jespers and Jan Okni{\' n}ski), slides from the talk in 
Stuttgart, June 2012.
\bibitem{GIS} F. Ced{\' o}, T. Gateva-Ivanova, A. Smoktunowicz, {\em On the Yang-Baxter equation and left nilpotent left braces},  J. Pure  Appl Algebra  {\bf 221} (2017), no. 4, 751--756.
\bibitem{cjo} Ferran Ced{\' o}, Eric Jespers, Jan Okni{\' n}ski, {\em Braces and the Yang-Baxter Equation}, Comm. Math. Phys.
 {\bf 327} (2014), 101--116.
\bibitem{retractable} \bysame {\em Retractability of set theoretic solutions of the Yang-Baxter equation}, Adv. Math. {\bf 224} (2010), 2472--2484. 
\bibitem{cjonil} \bysame {\em Nilpotent groups of class three and braces}, Publ. Mat. {\bf 60}
 (2016), no. 1, 55--79.
\bibitem{cjr} F. Ced{\' o}, E. Jespers and ´A. del Rio, {\em Involutive Yang-Baxter Groups}, Trans. Amer. Math. Soc. {\bf 362} (2010), 2541--2558.
\bibitem{etingof} P. Etingof, T. Schedler, T. Solovov, {\em A set theoretical solutions to the quantum Yang-Baxter equation,}
 Duke. Math. J. {\bf 100} (1999), 169-209.
\bibitem{Tatyana} Tatiana Gateva-Ivanova, {\em Set-theoretic solutions of the Yang-Baxter equation, Braces, and Symmetric groups}, arXiv:1507.02602v2 [math.QA], 31 August 2015.
\bibitem{tatyana2} \bysame
{\em{A combinatorial approach to the set-theoretic
solutions of the Yang-Baxter equation}},
 J.Math.Phys. {\bf 45} (2004), 3828--3858.
\bibitem{tatyana4} T. Gateva-Ivanova and S. Majid, {\em Matched pairs approach to set-theoretic
solutions of the Yang-Baxter equation}, J. Algebra {\bf 319} (2008), 1462--1529.
\bibitem{tatyana5} \bysame {\em Quantum spaces associated to multipermutation
solutions of level two}, Algebr. Represent. Theor. {\bf 14} (2011), no. 2, 341--376.
\bibitem{tatyana6} T. Gateva-Ivanova, Peter Cameron, {\em Multipermutation solutions of the Yang-Baxter equation}, Comm. Math. Phys. {\bf 309} (2012), 583--621.
\bibitem{tatyana7} T. Gateva-Ivanova and M. Van den Bergh, {\em Semigroups of I-type}, J. Algebra
  {\bf 206} (1998), 97--112.
\bibitem{lv} L. Guarnieri, L. Vendramin, {\em Skew braces and the Yang-Baxter equation},  Math.Comp. published electronically, November 28, 2016. DOI: https://doi.org/10.1090/mcom/3161.
\bibitem{hal} Hales, A.W., Passi, I.B.S.: The second augmentation quotient of an integral group ring. Arch. Math.
(Basel) {\bf 31}  (1978), 259--265.  
\bibitem{natalia} Natalia Iyudu, Stanislav Shkarin, {\em Finite dimensional semigroup quadratic algebras with the minimal number of relations}, Monatshefte für Mathematik, {\bf 168} (2012), no. 2, 239--252.
\bibitem{jespers1} E. Jespers and J. Okni{\' n}ski, {\em Monoids and groups of I-type}, Algebr. Represent.
Theor. {\bf 8} (2005), 709--729.
\bibitem{jespers2} \bysame Noetherian Semigroup Algebras, Springer, Dordrecht,
2007.
\bibitem{lam} T.Y. Lam,  A First Course in Noncommutative Rings,  Springer-Verlag, New York, 1991.
\bibitem{LZY} J. Lu, M. Yan and Y. Zhu, {\em On the set-theoretical Yang-Baxter equation}, Duke. Math. J. {\bf 104} (2000), 1-18. 
\bibitem{lothaire} M. Lothaire, Algebraic Combinatorics on Words, 
 E-Book, DOI: http://dx.doi.org/10.1017/CBO9781107326019.

\bibitem{psz} V. M. Petrogradsky, I.P.  Shestakov, E. Zelmanov, {\em Nil graded self-similar algebras},
Groups Geom. Dyn. {\bf 4} (2010),  no. 4, 873-900.

\bibitem{plotkin} Boris Plotkin, {\em Notes on Engel groups and Engel elements in
groups. Some generalizations}, Izv. Ural. Gos. Univ. Mat. Mekh.  {\bf 36}  (2005), no. 7, 153--166, 192--193.
\bibitem{robinson} D. J. S. Robinson,  A course on group theory, Springer-Verlag, GTM Vol.
80, 1996.
\bibitem{rob} \bysame  Finiteness Conditions and Generalized Soluble Groups. Springer-Verlag, Berlin-Heidelberg-New
York, 1972.
\bibitem{rowen} Louis Halle Rowen, Graduate Algebra: Noncommutative View, Graduate Studies in Mathematics, Volume 91, American Mathematical Society, Providence, Rhode Island, 2008.
\bibitem{mob} Wolfgang Rump, {\em Modules over braces}, Algebra Discrete Math. {\bf 2} (2006), 127-137.
\bibitem{rum3} \bysame {\em The brace of a classical group}, Note Mat. {\bf 34} (2014), no. 1, 115-144.
\bibitem{rump} \bysame {\em Braces, radical rings, and the quantum Yang-Baxter equation},  J. Algebra {\bf 307} (2007), no. 1, 153--170.
\bibitem{52} \bysame {\em A decomposition theorem for square-free unitary solutions of the Yang-Baxter equation}, Adv. Math. {\bf 101} (1990), no. 3, 583--591.
\bibitem{shalev} A. Shalev, {\em On associative algebras satisfying the Engel condition}, Israel J.Math. {\bf 67} (1989), 287--290.
\bibitem{st} L. Sorkatti,  G. Traustason, {\em  Nilpotent symplectic alternating algebras I}, J.Algebra {\bf 423} (2015), 615--635. 
\bibitem{sz} I. P. Shestakov and E. Zelmanov, {\em Some examples of nil Lie algebras}, J. Eur. Math. Soc. (JEMS) 10 (2008), no. 2, 391--398.
\bibitem{amitsur} Agata Smoktunowicz, {\em How far can we go with Amitsur's theorem in differential polynomial rings}, to appear in Israel J. Math.
\bibitem{smok} \bysame {\em The Jacobson radical of rings with nilpotent homogeneous elements}, 
Bull. London Math. Soc. {\bf 40} (2008), no. 6, 917--928.
\bibitem{2} Ya. P. Sysak, {\em The adjoint group of radical rings and related questions}, Ischia Group Theory 2010: Proceedings of the Conference. Edited 
 by Patrizia Longobardi, Mariagrazia Bianchi, Mercede Maj, 344--365.
\bibitem{sysak2} \bysame {\em Products of groups and local nearrings},  Note. Mat. {\bf 28} (2008),  no. 2, 177--214.
\bibitem{ttt}  A. Tortora, M. Tota and G. Traustason, {\em Symplectic alternating nil-algebras}, J.
Algebra {\bf 357} (2012), 183--202.
\bibitem{t} G. Traustason, {\em  Engel groups}, in Groups St Andrews 2009 in Bath, Volume II. London Math. Soc. Lecture Note Ser. 388, Cambridge University Press, Cambridge 2011, 520--550.
\bibitem {v} L. Vendramin, {\em Extensions of set-theoretic solutions of the Yang-Baxter equation and a conjecture of Gateva-Ivanova},  J. Pure  Appl Algebra {\bf 220} (2016), no. 5, 2064--2076.
\bibitem{wis} I. Wisliceny, {\em Konstruktion nilpotenter associativer}, Algebren mit wenig Relationen. Math. Nachr. {\bf 147}
(1990), 75--82.
\bibitem{z1} E. I. Zelmanov, {\em On some problems of group theory and Lie algebra}, Mat.
Sb. {\bf 66} (1990), no. 1, 159-167.
\bibitem{z2}  \bysame {\em The solution of the restricted Burnside problem for groups
of odd exponent}, Math. USSR. Izu. {\bf 36} (1991), no.1, 41--60.
\bibitem{z3} \bysame {\em The solution of the restricted Burnside problem for 2-
groups}, Mat. Sb. {\bf 182} (1991), no. 4, 568--592.
\end{thebibliography}

\end{document}